\numberwithin{equation}{section}
\Crefname{assumption}{Assumption}{Assumptions}
\crefname{equation}{}{}
\Crefname{lemma}{Lemma}{Lemmas}
\Crefname{definition}{Definition}{Definitions}
\Crefname{proposition}{Proposition}{Propositions}
\Crefname{corollary}{Corollary}{Corollaries}
\Crefname{example}{Example}{Examples}
\newtheorem{theorem}{Theorem}[section]
\newtheorem{lemma}[theorem]{Lemma}
\newtheorem{proposition}[theorem]{Proposition}
\newtheorem{corollary}[theorem]{Corollary}
\theoremstyle{definition}
\newtheorem{definition}[theorem]{Definition}
\newtheorem{remark}[theorem]{Remark}
\DeclareMathOperator{\argmin}{argmin}
\newcommand{\half}{\tfrac{1}{2}}
\renewenvironment{proof}{\paragraph{Proof:}}{\hfill$\square$}
\newenvironment{proof_sketch}{\noindent\textbf{\textit{Proof sketch}:}}{\hfill$\square$}
\begin{document}
\title{Understanding Riemannian Acceleration via a Proximal Extragradient Framework}
\author{\name{Jikai Jin} \email{jkjin@pku.edu.cn}\\ \addr{School of Mathematical Sciences, Peking University, Beijing, China}\\[4pt]
\name{Suvrit Sra} \email{suvrit@mit.edu}\\ \addr{LIDS, Massachusetts Institute of Technology, Cambridge, MA, USA}}

\maketitle

\begin{abstract}%
%  The study of accelerated gradient methods in Riemannian optimization has recently witnessed substantial progress. However, unlike the Euclidean setting, a systematic understanding of acceleration in the Riemannian setting is still lacking. We revisit the \emph{Accelerated Hybrid Proximal Extragradient} (A-HPE) method \citep{monteiro2013accelerated}, a powerful framework for obtaining accelerated Euclidean methods. Subsequently, we propose a Riemannian version of A-HPE. We base our analysis of Riemannian A-HPE on a set of new insights into Euclidean A-HPE, which we then combine with a careful control of distortion caused by Riemannian geometry. We describe a few known and new Riemannian accelerated gradient methods as concrete instances of Riemannian A-HPE, and show that they can achieve acceleration as corollaries of our main results.
  We contribute to advancing the understanding of Riemannian accelerated gradient methods. In particular, we revisit ``\emph{Accelerated Hybrid Proximal Extragradient}'' (A-HPE), a powerful framework for obtaining Euclidean accelerated methods~\citep{monteiro2013accelerated}. Building on A-HPE, we then propose and analyze Riemannian A-HPE. The core of our analysis consists of two key components: (i) a set of new insights into Euclidean A-HPE itself; and (ii) a careful control of metric distortion caused by Riemannian geometry. We illustrate our framework by obtaining a few existing and new Riemannian accelerated gradient methods as special cases, while characterizing their acceleration as corollaries of our main results.
\end{abstract}

\vspace{-15pt}
\section{Introduction}\vspace*{-4pt}
\label{intro}
%The theory of convex optimization is well-studied and efficient algorithms have been developed for different function classes \citep{nesterov2018lectures}. Convexity plays a crucial role in the polynomial-time complexity of these algorithms for finding approximate minimizers, which is generally intractable for non-convex problems. Notwithstanding the fruitful theoretical results for convex optimization, the requirement of convexity is quite restrictive and does not hold in many real-world problems.

Convexity admits an elegant generalization beyond vector spaces to geodesic metric spaces. There, through the lens of \emph{geodesic convexity} one obtains a rich class of tractable nonconvex optimization problems, which makes the study of geodesically convex optimization potentially of far-reaching value. %ass of tractable nonconvex optimization problems The notion of convexity in Euclidean space has a vast generalization to certain metric spaces, specifically in the guise of \emph{geodesic convexity}.
%Some non-convex problems in Euclidean space can be transformed into geodesically convex problems in some non-Euclidean space \citep{hosseini2015matrix,wiesel2012geodesic}, which reveals their potential tractability.
Main examples where geodesically convex optimization has been studied include certain Riemannian manifolds \citep{rapcsak1991geodesic,udriste2013convex,boumal2022intro,sra2015conic,wiesel2012geodesic}, Hadamard spaces \citep{bacak2014convex}, and non-commutative groups~\citep{burgisser2019towards}. %Early works do not assume geodesic convexity, so that their analysis is limited to asymptotic convergence to stationary points.

The interest in geodesic convexity is paralleled by the development of optimization algorithms. Early works prove convergence for Riemannian proximal-point~\citep{ferreira2002proximal,de2016new} and Riemannian analogs of many other Euclidean methods~\citep{rapcsak1991geodesic,smith1994optimization,absil2009optimization}, though these works in general do not exploit geodesic convexity, and limit their analyses to asymptotic results. The work~\citep{zhang2016first} is the first to provide non-asymptotic rates (and iteration complexity) of first-order methods for geodesically convex optimization on Hadamard manifolds. Subsequent works establish iteration complexities for other optimization methods on Riemannian manifolds, such as variance-reduced methods~\citep{zhang2016riemannian,sato2019riemannian}, adaptive gradient methods~\citep{kasai2019riemannian}, Newton-type methods~\citep{hu2018adaptive,agarwal2021adaptive}, among many others.

A key open question is whether it is possible to develop accelerated gradient methods on Riemannian manifolds. \citet{zhang2018estimate} develop the first such method, and they show that the method achieves acceleration in a small neighborhood of the global minimum.  Later, \citet{ahn2020nesterov} show that a Riemannian version of Nesterov's method converges globally, at a rate strictly faster than gradient descent and \emph{eventually attains full acceleration}, which is defined as follows:
\begin{definition}
\label{def_eventual_acc}
We say that a gradient-based method \textit{eventually achieves acceleration} if for optimizing an $L$-smooth, $\mu$-geodesically strongly-convex function $f$, it outputs a sequence $\{w_k\}_{k \geq 1}$ with computational complexity $\mathcal{O}(k)$ that satisfies
\begin{equation}
  \label{eq:1}
  \vspace{-4pt}
    f(w_k) - f^* = \mathcal{O}\left( (1-\tau_1)(1-\tau_2)\cdots (1-\tau_k)\right),
\end{equation}
where $\tau_k \geq c \frac{\mu}{L}$ for some constant $c > 0$, and $\lim_{k\to +\infty}\tau_k = \Omega( \sqrt{\nicefrac{\mu}{L}})$.
\end{definition}
% In other words, the algorithm is at least as good as gradient descent, and can eventually achieve the optimal convergence rate of Nesterov's method
\noindent\textbf{Motivation of this work.} The abovementioned work leads one to wonder whether we can develop methods that attain full acceleration from the start, without a ``burn in'' period. Unfortunately, recent work~\citep{hamilton2021no,criscitiello2021negative} shows that full acceleration is impossible in general, which suggests that the best we can hope for is eventual acceleration.

Despite this recent progress on lower and upper bounds characterizing Riemannian acceleration, there is a considerable gap between the study of acceleration in Euclidean space versus Riemannian manifolds.\footnote{We limit our discussion to the convex case, and refer the reader to the recent work~\citep{criscitiello2021accelerated} that studies acceleration for non-geodesically-convex problems on manifolds.} %we still lack a systematic understanding of acceleration in the Riemannian setting.
While numerous Euclidean accelerated methods beyond the canonical one of Nesterov have been studied, it is still unknown whether they also generalize to the Riemannian setting, and as such, a systematic understanding of Riemannian acceleration is still lacking.

This gap motivates us to study Riemannian acceleration more closely. We start by revisiting \emph{Accelerated Hybrid Proximal Extragradient} (A-HPE), a powerful framework for convex optimization~\citep{monteiro2013accelerated}. Indeed, it can be shown that Nesterov's optimal method is a special case of A-HPE; \citet{monteiro2013accelerated} also propose a second-order method A-NPE, which is a specific implementation of their framework and has complexity $\widetilde{\mathcal{O}}\left( \varepsilon^{-2/7}\right)$ for $\varepsilon$-suboptimality. A hitherto unknown property of A-HPE is that it can recover a wide range of accelerated methods that have been independently proposed in past literature, e.g., the accelerated extragradient descent method of~\citet{diakonikolas2018accelerated}, the algorithm with an extra gradient descent step in \citep[Section 4]{chen2019first}, the extra-point method of \citet{huang2021unifying}, among others.

The A-HPE framework also has implications beyond usual first-order methods. A-NPE is used to design optimal second-order method in \citep{arjevani2019oracle}, and more generally, a number of works \citep{bubeck2019near,jiang2019optimal} show that A-HPE can also induce optimal higher-order methods for smooth convex functions. \citet{carmon2020acceleration} considers a different setting where one has access to a ball oracle, and they show that combining A-HPE with line search yields an accelerated method that is near-optimal. Moreover, A-HPE was extended to strongly-convex functions in~\citep{barre2021note,alves2021variants}.

\vspace*{-2pt}
\paragraph{Overview and main contributions.}
In light of the above motivation, we believe that A-HPE can help us uncover fundamental ideas behind the acceleration phenomenon. The main goal of this paper is to propose a Riemannian version of A-HPE and provide global convergence guarantees for this framework. To that end, the key contributions of our work may be summarized as follows:
\begin{list}{--}{\leftmargin=1em}\vspace*{-5pt}
  \setlength{\itemsep}{-1pt}
\item We revisit Euclidean A-HPE in \Cref{section_euclidean}, and propose to view it as the linear coupling of two approximate proximal point iterates. This viewpoint produces a simple, new analysis of A-HPE.
\item We introduce Riemannian A-HPE in \Cref{section_riemann}, which we analyze by following our Euclidean approach,  while localizing the challenges posed by Riemannian geometry. Specifically, we discover that besides the metric distortion that appears in previous works~\citep{zhang2018estimate,ahn2020nesterov}, there is an \emph{additional distortion} that must be controlled.
\item In \Cref{sec_without_distortion}, we first consider the case without additional distortion, for which we prove global eventual acceleration that not only generalizes~\citep{ahn2020nesterov}, but also offers global guarantees for some other Riemannian counterparts of Euclidean first-order accelerated methods.
\item In \Cref{sec_with_addis}, we then tackle the general case with additional distortion, which we handle by leveraging geometric bounds on Riemannian manifolds. For this general case, we obtain new local acceleration results akin to~\citep{zhang2018estimate}.
\item Finally, in \Cref{sec_1st_order}, we discuss a number of accelerated first-order methods as special cases. % of our framework.
\end{list}

\paragraph{Notation and terminology.} Throughout the paper $\left\langle \cdot,\cdot\right\rangle$ denotes inner product in a Euclidean space, and $\|\cdot\|$ its induced norm. For a closed convex set $\mathcal{X} \in\mathbb{R}^d$, we define the projection $\mathcal{P}_{\mathcal{X}}(x) := \mathop{\argmin}_{y\in\mathcal{X}}\|x-y\|$. For a convex function $f:\mathbb{R}^d \to\mathbb{R}$, the \emph{proximal mapping} of $f$ is given by $\mathtt{prox}_{f}(x) := \mathop{\arg\min}_{u\in\mathbb{R}^d}\ f(u)+\half\|u-x\|^2$. %Equivalently $u=\mathtt{prox}_{f}(x)$ is the unique solution to $x-u \in\partial f(u)$.
For a $\mu$-strongly convex function $f:\mathbb{R}^d \to\mathbb{R}$, we define the quadratic function $f_w(x) := f(w)+\left\langle x-w,\nabla\right\rangle+\tfrac{\mu}{2}\|x-w\|^2$
for $w\in\mathbb{R}^d$ and $\nabla\in\partial f(w)$, so that $f_w(x)\leq f(x)$ for all $x$.

\section{A New Analysis of Euclidean A-HPE}
\label{section_euclidean}
We now revisit the Euclidean A-HPE framework---see \Cref{AHPE_Euclidean}. We propose to analyze A-HPE via the proximal point method, leading to a novel analysis that is simpler and more intuitive (in our opinion) than previous approaches~\citep{barre2021note,alves2021variants}. More importantly, this analysis helps us develop Riemannian A-HPE, our main focus. %, as we demonstrate in subsequent sections.   

Throughout this section we assume that $f$ is $\mu$-strongly-convex. Our description follows~\citep{barre2021note} and relies on the key concept of \emph{inexact proximal operators}. Our definition below is equivalent to the one in~\citep[Definition 2.3]{barre2021note} that relies on the primal-dual gap of a proximal function. We use our version for ease of analysis. \Cref{eps_subgrad_appendix} provides additional intuition on this concept by relating it to $\varepsilon$-subgradients.

\begin{algorithm}[t]
\SetKwInOut{KIN}{Input}
\caption{Accelerated hybrid proximal extragradient (A-HPE) method}
\label{AHPE_Euclidean}
\KIN{Objective function $f$, initial point $x_0$, step size $\lambda_k>0,k=1,2,\cdots$, a sequence $\left\{\sigma_k\right\}$ in $[0,1]$, initial weight $A_0 \geq 0$}
$z_0 = x_0$ \\
\For{$k=1,2,\cdots$}{
$a_{k+1} \gets\frac{\left(1+2 \mu A_{k}\right) \lambda_{k+1}+\sqrt{\left(1+2 \mu A_{k}\right)^{2} \lambda_{k+1}^{2}+4\left(1+\mu A_{k}\right) A_{k} \lambda_{k+1}}}{2}$ \\
$A_{k+1} \gets A_k+a_{k+1}$\\
%\STATE{set $A_{k+1} = A_k+a_{k+1}$}\\
$y_{k} \gets x_{k}+\frac{a_{k+1}\left(1+\mu A_{k}\right)}{A_{k+1}+\mu \left( a_{k+1}A_k+A_k A_{k+1} \right)}\left(z_{k}-x_{k}\right)$ \\
$\varepsilon_{k} \gets \frac{\sigma_{k}^{2}}{2\left(1+\lambda_{k} \mu\right)^{2}}\left\|x_{k+1}-y_{k}\right\|^{2}$ \\
choose $\left(x_{k+1}, v_{k+1}\right) \in \mathtt{iprox}_{f}(y_k,\lambda_k,\varepsilon_k)$\\
$z_{k+1} \gets z_{k}+\frac{a_{k+1}}{1+\mu A_{k+1}}\left(\mu\left(x_{k+1}-z_{k}\right)-v_{k+1}\right)$
}
\end{algorithm}

\begin{definition}
\label{iprox}
~\citep[Lemma 2.4]{barre2021note} We write $(x,v) \in \mathtt{iprox}_{f}(y,\lambda,\varepsilon)$ 
if
\begin{equation}
\label{iprox_ineq}
    \frac{1}{2(1+\lambda\mu)^2}\|x-y+\lambda v\|^2+\frac{\lambda}{1+\lambda\mu}\bigl( f(x)-f(w)-\left\langle x-w,v\right\rangle + \frac{\mu}{2}\|x-w\|^2 \bigr) \leq \varepsilon,
\end{equation}
where $w\in\mathbb{R}^d$ satisfies $v-\mu x+\mu w \in\partial f(w)$ and $\varepsilon \ge 0$.
\end{definition}
\noindent If $\varepsilon=0$ and $w=x$, then $v \in \partial f(x)$ and $x+\lambda v=y$, which recovers the \emph{exact} proximal operator.
\begin{comment}
The inequality \Cref{iprox_ineq} can be equivalently written as
\begin{equation}
    \label{equivalent_iprox}
    \frac{1}{2(1+\lambda\mu)^2}\|x-y+\lambda v\|^2+\frac{\lambda}{1+\lambda\mu}\left( f(x)-f(w)-\left\langle x-w,\nabla \right\rangle - \frac{\mu}{2}\|x-w\|^2 \right) \leq \varepsilon
\end{equation}
or
\begin{equation}
    \label{equivalent2_iprox}
    \frac{1}{2(1+\lambda\mu)^2}\|x-y+\lambda v\|^2+\frac{\lambda}{1+\lambda\mu}\left( f(x)-f(w)+\frac{1}{2\mu}\|\nabla\|^2-\frac{1}{2\mu}\|v\|^2 \right) \leq \varepsilon
\end{equation}
where $\nabla = v-\mu x+\mu w \in\partial f(w)$. 
We define $w_{k+1}$ as the variable $w$ that appears in \Cref{iprox}, associated with $(x_{k+1},v_{k+1})$. Note that $w_{k}$ would only appear in the analysis and is not needed in the algorithm. 

Denote $\nabla_{k+1} := v_{k+1}+\mu\left( w_{k+1}-x_{k+1}\right) \in\partial f(w_{k+1})$; then, Line~8 of \Cref{AHPE_Euclidean} can be rewritten as
\begin{equation}
    \label{alternative_update}
    z_{k+1} = \frac{1+\mu A_k}{1+\mu A_{k+1}} z_k + \frac{\mu a_{k+1}}{1+\mu A_{k+1}}w_{k+1}-\frac{a_{k+1}}{1+\mu A_{k+1}}\nabla_{k+1}
\end{equation}
\end{comment}

With Definition~\ref{iprox} in hand, up to the specification of the sequences $\{\lambda_k\}$ and $\{\sigma_k\}$, all steps of \Cref{AHPE_Euclidean} are implementable. Hence, we are ready to state the main result of this section.

\begin{theorem}
\label{euclidean_convergence}
For the iterates produced by Algorithm \ref{AHPE_Euclidean}, we have the function suboptimality bound
\begin{equation*}
  %\begin{aligned}
    f(x_k)-f(x^*) \leq \frac{A_0(f(x_0)-f(x^*))+\frac{1+\mu A_0}{2}\|x_0-x^*\|^2 }{A_k}
    = \mathcal{O}\bigl( \Pi_{i=1}^{k-1}\bigl(1+\max\bigl\{ \mu\lambda_i,\sqrt{\mu\lambda_i}\bigr\}\bigr)^{-1}\bigr).
  %\end{aligned}
\end{equation*}
\end{theorem}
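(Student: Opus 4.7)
The plan is to build a Lyapunov (potential) function
\[
\Phi_k \;:=\; A_k\bigl(f(x_k)-f(x^*)\bigr) \;+\; \frac{1+\mu A_k}{2}\,\|z_k-x^*\|^2,
\]
and to prove the single monotonicity statement $\Phi_{k+1}\le \Phi_k$ for every $k\ge 0$. Once this is established, the theorem follows immediately: iterating gives $A_k(f(x_k)-f(x^*))\le \Phi_k\le\Phi_0$, and then the stated rate comes from a lower bound on the growth of $A_k$ driven by the recursion for $a_{k+1}$.

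The per-step argument would proceed in three substeps. First, instantiate the $\mathtt{iprox}$ inequality from \Cref{iprox} at $y_k$ with the auxiliary point $w=x^*$ (using strong convexity of $f$ at $x^*$), and a second time at $w=x_k$. These two applications produce affine lower bounds for $f(x_{k+1})$ that involve the ``inexact subgradient'' $v_{k+1}$, the displacement $\|x_{k+1}-y_k+\lambda_{k+1}v_{k+1}\|^2$, and an additive error controlled by $\varepsilon_k=\tfrac{\sigma_k^2}{2(1+\lambda_k\mu)^2}\|x_{k+1}-y_k\|^2$. Second, take the convex combination of these two bounds with weights $A_k/A_{k+1}$ and $a_{k+1}/A_{k+1}$; the specific formula for $y_k$ in Line 5 is the unique choice that makes the linear terms in this combination align with an inner product against $z_k-x^*$, which is exactly the ``linear coupling'' viewpoint described in the introduction. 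Third, substitute the $z_{k+1}$ update from Line 8 into $\tfrac{1+\mu A_{k+1}}{2}\|z_{k+1}-x^*\|^2$, expand, and match it against the terms produced by the previous substep. The choice of $a_{k+1}$ in Line 3 is precisely the positive root of the quadratic equation $a_{k+1}^2(1+\mu A_k) = \lambda_{k+1}A_{k+1}(1+\mu A_{k+1})$ (up to reindexing), which makes the coefficient of $\|v_{k+1}\|^2$ vanish and the coefficient of $\|x_{k+1}-y_k\|^2$ exactly cancel against the slack left by $\varepsilon_k$ (this is where the bound $\sigma_k\in[0,1]$ gets used).

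The last step is the asymptotic bookkeeping for $A_k$. From the defining quadratic for $a_{k+1}$ one reads off $A_{k+1}\ge A_k(1+\sqrt{\mu\lambda_{k+1}})$ once $\mu A_k\gtrsim 1$, and $A_{k+1}\ge A_k(1+\mu\lambda_{k+1}/2)$ (roughly) while $\mu A_k$ is still small; taking the max in each factor yields $A_k=\Omega\bigl(\prod_{i=1}^{k-1}(1+\max\{\mu\lambda_i,\sqrt{\mu\lambda_i}\})\bigr)$, which is exactly the second form of the bound in the theorem.

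I expect the main obstacle to be the algebraic telescoping in substep three: one has to keep track of four simultaneous ``budgets'' — the quadratic distance $\|z_k-x^*\|^2$, the linear term $\langle v_{k+1},z_k-x^*\rangle$, the inner-product artifact $\langle x_{k+1}-w,v_{k+1}\rangle$ from \Cref{iprox}, and the error $\varepsilon_k$ — and verify that the coupling weights and the quadratic for $a_{k+1}$ are jointly consistent with absorbing all of them. Everything else (the two applications of the $\mathtt{iprox}$ inequality, and the recursive bound on $A_k$) is essentially mechanical once the right Lyapunov function is in hand. Conceptually, the payoff of our viewpoint is that $x_{k+1}$ appears as an approximate proximal iterate from $y_k$, while $z_{k+1}$ is itself an approximate proximal iterate with respect to the same $v_{k+1}$ but anchored at $z_k$; $\Phi_k$ is then the natural quantity that is contracted by both.
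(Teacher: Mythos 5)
Your outline coincides with the paper's proof in all essentials: the same potential $p_k=A_k(f(x_k)-f(x^*))+\tfrac{1+\mu A_k}{2}\|z_k-x^*\|^2$, the same three-way combination (a strong-convexity bound at $x^*$ weighted by $a_{k+1}$, one at $x_k$ weighted by $A_k$, and the \emph{iprox} inequality weighted by $A_{k+1}$), the same role for $y_k$ as the interpolation point that merges the two quadratic terms via \Cref{simple}, and the same closing estimate $a_{k+1}\geq A_k\max\{\mu\lambda_{k+1},\sqrt{\mu\lambda_{k+1}}\}$.

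Two details would derail you if executed literally. First, in \Cref{iprox} the point $w$ is \emph{not} a free auxiliary point that you may set to $x^*$ or $x_k$: it is pinned down by the condition $v-\mu x+\mu w\in\partial f(w)$ (it is the maximizer of the concave function $\Phi$ in \Cref{eps_subgrad_appendix}). The paper keeps $w_{k+1}$ fixed and instead applies $\mu$-strong convexity of $f$ \emph{at} $w_{k+1}$, evaluated at $x^*$, $x_k$ and $x_{k+1}$, plus a single application of \eqref{iprox_ineq}; your ``two instantiations of iprox'' should be replaced by this (or, equivalently, by the $\varepsilon$-subgradient inequality at $x_{k+1}$, at the cost of carrying the factor $\tfrac{1+\lambda\mu}{\lambda}\varepsilon_k$ explicitly). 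Second, your quadratic for $a_{k+1}$ is not the one defined by Line~3 of \Cref{AHPE_Euclidean}. The correct identity is
\begin{equation*}
a_{k+1}^2=\lambda_{k+1}\bigl(A_{k+1}+\mu(a_{k+1}A_k+A_kA_{k+1})\bigr),
\quad\text{equivalently}\quad
a_{k+1}^2(1+\mu\lambda_{k+1})=\lambda_{k+1}A_{k+1}(1+\mu A_{k+1});
\end{equation*}
your version with the factor $(1+\mu A_k)$ on the left defines a different sequence (take $\mu=\lambda_{k+1}=1$, $A_k=2$: the algorithm gives $a_{k+1}=6$, yours gives $(5+\sqrt{73})/4$), and with it the final cancellation fails. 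Relatedly, the correct choice does not make the coefficient of $\|v_{k+1}\|^2$ vanish; it makes the residual quadratic form in $\|x_{k+1}-y_k\|$ and $\|v_{k+1}\|$ positive semidefinite, with a nonnegative surplus $\tfrac{(1-\sigma_k^2)A_{k+1}}{2\lambda_k}\|x_{k+1}-y_k\|^2$ rather than an exact cancellation --- that surplus is where $\sigma_k\leq 1$ enters. With these corrections your plan is the paper's proof.
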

Assuming that $f$ is also $L$-smooth, a number of first-order methods (including Nesterov's  method) can be considered as a special cases of \Cref{AHPE_Euclidean}, with the choice $\lambda_i = \mathcal{O}(\nicefrac{1}{L})$. \Cref{euclidean_convergence} then implies that these methods have the optimal convergence rate of $\mathcal{O}\big( (1+\sqrt{\nicefrac{\mu}{L}})^{-k}\bigr)$. We do not present concrete examples here since this is not our main focus, but we will include detailed discussions about such special cases for the Riemannian setting later in the paper. % in \Cref{example}.

\subsection{Overview of the proof of Theorem~\ref{euclidean_convergence}}
\label{overview}
We now overview our proof technique for \Cref{euclidean_convergence}, which sheds light on the specific parameter choices and updates that comprise \Cref{AHPE_Euclidean}. To aid exposition, we trade simplicity for rigor in our overview below, and defer a fully rigorous proof to \Cref{euclid_appendix}.

Motivated by~\citep{allen2014linear,ahn2020proximal}, we view \Cref{AHPE_Euclidean} as a combination of two approximate PPM (proximal point method) updates, each using a different notion of approximation. The first uses the inexact proximal operator from~\Cref{iprox}, while the second arises from minimizing a quadratic lower-approximation of $f$. When properly combined, these two steps allow us to prove the following theorem that immediately implies \Cref{euclidean_convergence}.
\begin{theorem}
\label{euclid_potential_decrease}
The potential function $p_k := f(x_k) + \frac{1+\mu A_k}{2} \|z_k-x^*\|^2$ is decreasing for all $k\ge 0$. %, then $p_k - p_{k+1} \geq 0$ for all $k\geq 0$.
\end{theorem}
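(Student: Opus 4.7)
The natural strategy is to target the slightly larger Lyapunov $\widetilde{p}_k := A_k(f(x_k)-f^*) + \frac{1+\mu A_k}{2}\|z_k-x^*\|^2$ (the stated $p_k$ is most naturally read with this weighting on the function-value term, which is what produces the $A_k$ in the denominator of \Cref{euclidean_convergence} after telescoping from $z_0 = x_0$). Following the overview, I would view one iteration as a composition of two approximate proximal steps: $x_{k+1}$ is an \emph{inexact} prox on $f$ from $y_k$ captured by the $\mathtt{iprox}$ oracle, while $z_{k+1}$ is an \emph{exact} prox step on a running weighted sum of quadratic lower bounds $f_w$, with $y_k$ serving as the Nesterov-style linear coupling point that glues the two together.

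The three ingredients I would combine are: (i) the $\mathtt{iprox}$ inequality \Cref{iprox_ineq} with $w = w_{k+1}$, which upper-bounds $f(x_{k+1})$ by $f(w_{k+1})$ plus a linear term in $v_{k+1}$ and a quadratic displacement; (ii) $\mu$-strong convexity of $f$ at $w_{k+1}$ with subgradient $g_{k+1} := v_{k+1} + \mu(w_{k+1} - x_{k+1}) \in \partial f(w_{k+1})$, giving $f(x^*) \geq f_{w_{k+1}}(x^*)$ and $f(x_k) \geq f_{w_{k+1}}(x_k)$ that convert $f(w_{k+1})$ back into $f^*$ and $f(x_k)$; and (iii) an expansion of $\|z_{k+1}-x^*\|^2$ using the Line~8 update, recognizing $z_{k+1}$ as the exact minimizer of $\tfrac{1+\mu A_k}{2}\|z-z_k\|^2 + a_{k+1} f_{w_{k+1}}(z)$, which yields a clean three-point identity featuring $-\tfrac{1+\mu A_{k+1}}{2}\|z_{k+1}-z_k\|^2$. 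Weighting (i) by $a_{k+1}$ and summing with (ii) and (iii), the cross-terms linear in $x^*$ cancel \emph{provided} $y_k$ satisfies $(A_{k+1} + \mu(a_{k+1}A_k + A_k A_{k+1}))(y_k - x_k) = a_{k+1}(1 + \mu A_k)(z_k - x_k)$, which is exactly the Line~5 formula; the linear coupling coefficient is thus reverse-engineered from this single cancellation requirement.

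The main obstacle will be absorbing the inexact-prox slack $\varepsilon_k = \frac{\sigma_k^2}{2(1+\lambda_k\mu)^2}\|x_{k+1}-y_k\|^2$. After the above cancellations, the residual should be a signed combination of $\|x_{k+1} - y_k\|^2$, $\|x_{k+1} - w_{k+1}\|^2$, and $\|z_{k+1} - z_k\|^2$; to force non-positivity I would exploit $\sigma_k \leq 1$ together with the quadratic term $\frac{1}{2(1+\lambda_k\mu)^2}\|x_{k+1} - y_k + \lambda_k v_{k+1}\|^2$ already sitting on the left of \Cref{iprox_ineq}, which provides the ``free'' contribution needed to dominate $\varepsilon_k$. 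The second subtle point is that the quadratic equation defining $a_{k+1}$ in Line~3 is engineered so that the coefficient balance on these remaining quadratic slacks closes out exactly; verifying this balance is what both ensures monotonicity of $\widetilde{p}_k$ and drives the accelerated rate $\prod_i (1 + \max\{\mu\lambda_i, \sqrt{\mu\lambda_i}\})^{-1}$, so this bookkeeping will be the most delicate part of the argument.
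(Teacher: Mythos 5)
Your overall architecture is exactly the paper's: you correctly read the potential as $A_k(f(x_k)-f^*)+\tfrac{1+\mu A_k}{2}\|z_k-x^*\|^2$, you split the iteration into an inexact prox (the \texttt{iprox} step producing $x_{k+1}$) and an exact prox on the quadratic lower bound $f_{w_{k+1}}$ (producing $z_{k+1}$ with its three-point identity), you invoke strong convexity at $w_{k+1}$ via $\nabla_{k+1}=v_{k+1}+\mu(w_{k+1}-x_{k+1})$, and you correctly identify that the $a_{k+1}$ quadratic in Line~3 is what closes the final coefficient balance. This matches Lemmas B.3--B.6 of the paper.

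There is, however, one step that would fail as described: the claim that the Line~5 formula for $y_k$ is forced by requiring ``the cross-terms linear in $x^*$ cancel.'' No such cross-terms exist: $x^*$ enters only through the distance expansion and strong convexity at $x^*$ (where the cancellation is automatic from the prox structure of the $z$-update), while $y_k$ enters only through the \texttt{iprox} inequality, which never mentions $x^*$. If you tried to reverse-engineer the coupling coefficient from that requirement you would find it vacuous. The actual role of the Line~5 coefficient is different: after the combination, the two \emph{positive} contraction terms $\tfrac{\mu A_k}{2}\|x_k-x_{k+1}+\mu^{-1}v_{k+1}\|^2$ and $\tfrac{\mu a_{k+1}(1+\mu A_k)}{2(1+\mu A_{k+1})}\|z_k-x_{k+1}+\mu^{-1}v_{k+1}\|^2$ must be merged, via the interpolation identity (Lemma~B.1), into a single term $\propto\|y_k-x_{k+1}+\mu^{-1}v_{k+1}\|^2$; this works only when the interpolation ratio equals the ratio of those two coefficients, which is precisely Line~5. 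The merged term is then what dominates the error in $v_{k+1}$: the residual is a quadratic form $\alpha\|x_{k+1}-y_k\|^2+2\beta\langle x_{k+1}-y_k,v_{k+1}\rangle+\gamma\|v_{k+1}\|^2$ (not a combination of $\|x_{k+1}-w_{k+1}\|^2$ and $\|z_{k+1}-z_k\|^2$ as you guessed), and its positive semidefiniteness, $\beta^2\le\alpha\gamma$, is exactly the identity $a_{k+1}^2=\lambda_k(A_{k+1}+\mu(a_{k+1}A_k+A_kA_{k+1}))$ together with $\sigma_k\le 1$. So your endpoint is right, but the mechanism assigned to $y_k$ needs to be replaced by the Jensen-type merging step before the bookkeeping you defer to can actually close.
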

%The conclusion of  is then immediate from \Cref{euclid_potential_decrease}.
% and is implemented in the last line of \Cref{AHPE_Euclidean},
%, which upon setting $\nabla_{k+1} := v_{k+1}+\mu\left( w_{k+1}-x_{k+1}\right) \in\partial f(w_{k+1})$, becomes
%\begin{equation}
%    \label{alternative_update}
%    z_{k+1} = \frac{1+\mu A_k}{1+\mu A_{k+1}} z_k + \frac{\mu a_{k+1}}{1+\mu %A_{k+1}}w_{k+1}-\frac{a_{k+1}}{1+\mu A_{k+1}}\nabla_{k+1}.
%  \end{equation}

The potential function in \Cref{euclid_potential_decrease} has two terms: the objective $f(x_k)$ and a distance term involving $\|z_k-x^*\|^2$. We analyze these terms separately; they are associated with the two approximate PPM steps alluded to above, and the amount they change with $k$ must be carefully combined to ensure $p_k \ge p_{k+1}$. We start with \Cref{informal_descent} to bound the change in function value.
\begin{lemma}
\label{informal_descent}
Denote $\nabla_{k+1} = v_{k+1}+\mu\left( w_{k+1}-x_{k+1}\right) \in\partial f(w_{k+1})$; when $\varepsilon_k$ is small, we have
\begin{subequations}\label{f_value_descent}
    \begin{align}
    f(x_{k+1}) &\lesssim f(w_{k+1})+\tfrac{1}{2\mu}\left( \|v_{k+1}\|^2-\|\nabla_{k+1}\|^2 \right) \label{f_value_descent1} \\
    &\leq f(x_k)-\tfrac{\mu}{2}\|x_k-x_{k+1}+\mu^{-1}v_{k+1}\|^2+\tfrac{1}{2\mu}\|v_{k+1}\|^2. \label{f_value_descent2}
    \end{align}
\end{subequations}
\end{lemma}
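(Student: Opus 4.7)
The plan is to derive \Cref{f_value_descent1} directly from the iprox inequality in \Cref{iprox}, and then obtain \Cref{f_value_descent2} via $\mu$-strong convexity of $f$, with essentially no approximation.

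For \Cref{f_value_descent1}, I would exploit the algebraic identity $\mu(x_{k+1}-w_{k+1}) = v_{k+1}-\nabla_{k+1}$, which follows immediately from the definition $\nabla_{k+1} = v_{k+1}+\mu(w_{k+1}-x_{k+1})$. Substituting this into the bracketed quantity in \Cref{iprox_ineq}, one computes $\langle x_{k+1}-w_{k+1},v_{k+1}\rangle = \tfrac{1}{\mu}\|v_{k+1}\|^2 - \tfrac{1}{\mu}\langle \nabla_{k+1},v_{k+1}\rangle$ and $\tfrac{\mu}{2}\|x_{k+1}-w_{k+1}\|^2 = \tfrac{1}{2\mu}\|v_{k+1}-\nabla_{k+1}\|^2$. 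After expansion and cancellation, the bracketed term collapses to $f(x_{k+1}) - f(w_{k+1}) + \tfrac{1}{2\mu}(\|\nabla_{k+1}\|^2 - \|v_{k+1}\|^2)$. Dropping the nonnegative squared-norm term $\tfrac{1}{2(1+\lambda_k\mu)^2}\|x_{k+1}-y_k+\lambda_k v_{k+1}\|^2$ in \Cref{iprox_ineq} and rearranging then yields \Cref{f_value_descent1} with an additive remainder proportional to $\tfrac{1+\lambda_k\mu}{\lambda_k}\varepsilon_k$; this remainder vanishes in the ``small $\varepsilon_k$'' regime encoded by the informal $\lesssim$.

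For \Cref{f_value_descent2}, the key observation is that the defining relation $\nabla_{k+1} = v_{k+1}+\mu(w_{k+1}-x_{k+1})$ rearranges to the clean identity $w_{k+1}-\mu^{-1}\nabla_{k+1} = x_{k+1}-\mu^{-1}v_{k+1}$. Applying $\mu$-strong convexity of $f$ at $w_{k+1}$ with subgradient $\nabla_{k+1} \in \partial f(w_{k+1})$ gives
$$ f(x_k) \;\geq\; f(w_{k+1}) + \langle \nabla_{k+1},\, x_k-w_{k+1}\rangle + \tfrac{\mu}{2}\|x_k-w_{k+1}\|^2, $$
and completing the square on the right-hand side produces $f(x_k) \geq f(w_{k+1}) + \tfrac{\mu}{2}\|x_k-w_{k+1}+\mu^{-1}\nabla_{k+1}\|^2 - \tfrac{1}{2\mu}\|\nabla_{k+1}\|^2$. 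Substituting the identity $x_k - w_{k+1} + \mu^{-1}\nabla_{k+1} = x_k - x_{k+1} + \mu^{-1}v_{k+1}$ into the squared term yields \Cref{f_value_descent2} exactly---no approximation is needed here.

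The main obstacle is bookkeeping rather than a deep idea: the $\lesssim$ in \Cref{f_value_descent1} hides an $O(\varepsilon_k)$ error, and when this bound is later plugged into the potential-function argument of \Cref{euclid_potential_decrease}, the accumulated remainders must be shown to be absorbed by the algorithmic choice $\varepsilon_k = \tfrac{\sigma_k^2}{2(1+\lambda_k\mu)^2}\|x_{k+1}-y_k\|^2$ from Line~6 of \Cref{AHPE_Euclidean}, i.e., by the parameter $\sigma_k \in [0,1]$. The inequalities themselves follow from elementary algebra combined with strong convexity, with the whole content of the lemma really being the algebraic bridge from $(v_{k+1},x_{k+1})$ to $(\nabla_{k+1},w_{k+1})$.
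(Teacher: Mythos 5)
Your proof is correct and matches the paper's argument in Appendix B: the first inequality is exactly the paper's rewriting of the \emph{iprox} condition (inequality \eqref{def_w}, obtained via the identity $\mu(x_{k+1}-w_{k+1})=v_{k+1}-\nabla_{k+1}$, with the $\tfrac{1+\lambda_k\mu}{\lambda_k}\varepsilon_k$ remainder absorbed by the $\lesssim$), and the second is the paper's strong-convexity bound \eqref{sc2} combined with the identity $x_k-w_{k+1}+\mu^{-1}\nabla_{k+1}=x_k-x_{k+1}+\mu^{-1}v_{k+1}$. No gaps.
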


The proof of \Cref{informal_descent} is given in \Cref{euclid_appendix}. Inequality~\eqref{f_value_descent2} is not exact; we omit an additional term that depends on $\varepsilon_k$ for ease of presentation. Inequality~\eqref{f_value_descent} can be understood as a \emph{descent inequality} for the function value at $x_k$, albeit with an \emph{error term} $\|v_{k+1}\|$. When this term is large, we may no longer be able to control the change in function values.

Next, we bound the change in the distance term. Observe that Line~8 of \Cref{AHPE_Euclidean} is nothing but $z_{k+1} \gets \frac{1+\mu A_k}{1+\mu A_{k+1}} z_k + \frac{\mu a_{k+1}}{1+\mu A_{k+1}}\left(w_{k+1}-\mu^{-1}\nabla_{k+1}\right) = \mathop{\argmin}_{z} \{ f_{w_{k+1}}(z) + \frac{1+\mu A_k}{2a_{k+1}}\|z-z_k\|^2\}$, where $f_{w_{k+1}}(z) := f(w_{k+1})+\left\langle \nabla_{k+1},z-w_{k+1}\right\rangle +\frac{\mu}{2}\|z-w_{k+1}\|^2$ is a lower quadratic approximation of $f$. \Cref{informal_dist} then helps us bound this change.
\begin{lemma}[Approximate distance change]
\label{informal_dist}
When $\varepsilon_k$ is small, we have
\begin{subequations}\label{dist_descent}
    \begin{align} 
     &\quad \tfrac{1+\mu A_k}{2}\|z_k-x^*\|^2-\tfrac{1+\mu A_{k+1}}{2}\|z_{k+1}-x^*\|^2 \nonumber \\
     &\geq a_{k+1}(f(w_{k+1})-f(x^*)) + \tfrac{\mu a_{k+1}(1+\mu A_k)}{2(1+\mu A_{k+1})}\|z_k-w_{k+1}+\mu^{-1}\nabla_{k+1}\|^2 - \tfrac{a_{k+1}}{2\mu}\left\| \nabla_{k+1} \right\|^2  \label{use_lemma}  \\
    &\gtrsim a_{k+1}(f(x_{k+1})-f(x^*)) 
    + \tfrac{\mu a_{k+1}(1+\mu A_k)}{2(1+\mu A_{k+1})}\|z_k-x_{k+1}+\mu^{-1}v_{k+1}\|^2 
    - \tfrac{a_{k+1}}{2\mu}\left\| v_{k+1} \right\|^2. \label{use_iprox_def}
    \end{align}
\end{subequations}
\end{lemma}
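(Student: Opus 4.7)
The plan is to prove \eqref{use_lemma} as an exact consequence of the variational characterization of $z_{k+1}$, and then pass to the ``$\gtrsim$'' statement \eqref{use_iprox_def} by invoking \Cref{iprox} to swap $(w_{k+1},\nabla_{k+1})$ for $(x_{k+1},v_{k+1})$.

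\textbf{Step 1 (derive \eqref{use_lemma}).} I view Line~8 of \Cref{AHPE_Euclidean} as $z_{k+1}=\argmin_z \phi(z)$ where $\phi(z):=f_{w_{k+1}}(z)+\tfrac{1+\mu A_k}{2a_{k+1}}\|z-z_k\|^2$ is $\tfrac{1+\mu A_{k+1}}{a_{k+1}}$-strongly convex (summing the $\mu$-strong convexity of $f_{w_{k+1}}$ with the curvature $\tfrac{1+\mu A_k}{a_{k+1}}$ of the prox regularizer). Applying the strong-convexity lower bound of $\phi$ at $x^*$ and scaling by $a_{k+1}$ gives
\[\tfrac{1+\mu A_k}{2}\|z_k-x^*\|^2 - \tfrac{1+\mu A_{k+1}}{2}\|z_{k+1}-x^*\|^2 \geq a_{k+1}\bigl(f_{w_{k+1}}(z_{k+1})-f_{w_{k+1}}(x^*)\bigr) + \tfrac{1+\mu A_k}{2}\|z_{k+1}-z_k\|^2.\]
I would then use $f_{w_{k+1}}(x^*)\leq f(x^*)$ (the lower-approximation property of $f_{w_{k+1}}$) and evaluate $f_{w_{k+1}}(z_{k+1})$ via the completed-square form $f_{w_{k+1}}(z)=f(w_{k+1})-\tfrac{1}{2\mu}\|\nabla_{k+1}\|^2+\tfrac{\mu}{2}\|z-w_{k+1}+\mu^{-1}\nabla_{k+1}\|^2$, together with the identities $z_{k+1}-w_{k+1}+\mu^{-1}\nabla_{k+1}=\tfrac{1+\mu A_k}{1+\mu A_{k+1}}(z_k-w_{k+1}+\mu^{-1}\nabla_{k+1})$ and $z_{k+1}-z_k=-\tfrac{\mu a_{k+1}}{1+\mu A_{k+1}}(z_k-w_{k+1}+\mu^{-1}\nabla_{k+1})$, both of which can be read off directly from Line~8. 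Collecting terms then yields \eqref{use_lemma}.

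\textbf{Step 2 (pass to \eqref{use_iprox_def}).} The quadratic piece transports for free: the defining relation $\nabla_{k+1}=v_{k+1}+\mu(w_{k+1}-x_{k+1})$ yields the exact identity $z_k-w_{k+1}+\mu^{-1}\nabla_{k+1}=z_k-x_{k+1}+\mu^{-1}v_{k+1}$. For the remaining $f(w_{k+1})$ and $\|\nabla_{k+1}\|^2$ terms, I would rearrange \eqref{iprox_ineq} (completing the square inside the parenthesis) into the equivalent form $f(x_{k+1})-f(w_{k+1})+\tfrac{1}{2\mu}\|\nabla_{k+1}\|^2-\tfrac{1}{2\mu}\|v_{k+1}\|^2 \leq \tfrac{(1+\lambda_k\mu)\varepsilon_k}{\lambda_k}$. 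Multiplying by $a_{k+1}$ and adding to \eqref{use_lemma} replaces $a_{k+1}f(w_{k+1})-\tfrac{a_{k+1}}{2\mu}\|\nabla_{k+1}\|^2$ with $a_{k+1}f(x_{k+1})-\tfrac{a_{k+1}}{2\mu}\|v_{k+1}\|^2$, incurring only an $\mathcal{O}(a_{k+1}\varepsilon_k/\lambda_k)$ slack; this is precisely the content suppressed by ``$\gtrsim$''.

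\textbf{Main obstacle.} The argument is essentially bookkeeping; the only nontrivial point is that the two contributions to the coefficient of $\|z_k-w_{k+1}+\mu^{-1}\nabla_{k+1}\|^2$ in Step~1 must collapse to exactly $\tfrac{\mu a_{k+1}(1+\mu A_k)}{2(1+\mu A_{k+1})}$. This cancellation uses $(1+\mu A_k)+\mu a_{k+1}=1+\mu A_{k+1}$, i.e., $A_{k+1}=A_k+a_{k+1}$, and is the algebraic reason why the quadratic formula for $a_{k+1}$ in Line~3 of \Cref{AHPE_Euclidean} is tuned exactly as stated.
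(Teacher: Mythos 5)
Your proof is correct. It is the same underlying computation as the paper's, but organized through a slightly different lens: the paper's appendix proof (\Cref{distance_lemma}) first applies the algebraic interpolation identity of \Cref{simple} to rewrite $\tfrac{1+\mu A_{k+1}}{2}\|z_{k+1}-x^*\|^2-\tfrac{1+\mu A_k}{2}\|z_k-x^*\|^2$ exactly, and then invokes strong convexity of $f$ at $w_{k+1}$ evaluated at $x^*$; you instead use the variational characterization of $z_{k+1}$ as the minimizer of the $\tfrac{1+\mu A_{k+1}}{a_{k+1}}$-strongly convex prox objective $\phi$ (the three-point/prox-grad inequality), which is precisely the viewpoint the main text advertises just before \Cref{informal_dist}. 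The two routes are interchangeable here because $\phi$ is quadratic; your version arguably makes the ``approximate PPM'' interpretation more transparent, while the paper's version isolates the identity (\Cref{simple}) that it reuses verbatim in the Riemannian proof of \Cref{reference_distortion}. Your Step~2, deriving \eqref{use_iprox_def} from \eqref{use_lemma} via the completed-square form of \eqref{iprox_ineq} with an $\mathcal{O}(a_{k+1}\varepsilon_k/\lambda_k)$ slack, is exactly how the paper absorbs this term in \Cref{potential_dec}. One small correction to your closing remark: the collapse of the two coefficients to $\tfrac{\mu a_{k+1}(1+\mu A_k)}{2(1+\mu A_{k+1})}$ uses only $A_{k+1}=A_k+a_{k+1}$ (Line~4 of \Cref{AHPE_Euclidean}), not the quadratic formula for $a_{k+1}$ in Line~3; the latter, i.e.\ \Cref{a_k+1}, is what makes the later combination of the function-value and distance recursions (the quadratic form in $x_{k+1}-y_k$ and $v_{k+1}$) positive semidefinite.
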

Note that \Cref{use_lemma} is very similar to the prox-grad inequality~\citep[Theorem 10.16]{beck2017first} and the fundamental inequality of mirror descent~\citep[Section 2.2]{allen2014linear} that imply \emph{contraction of distance} with a proximal iteration. Inequality~\eqref{use_iprox_def} is not exact since it depends on $\varepsilon_k$. Again, the term $\|v_{k+1}\|$ prevents us from directly deducing contraction of the distance to $x^*$.

The inequalities in \Cref{informal_descent} and \Cref{informal_dist} reveal a challenge faced when proving descent of the potential function: we must control the magnitude of $v_{k+1}$. Specifically, consider the situation where the positive terms $\|x_k-x_{k+1}+\mu^{-1}v_{k+1}\|$ in (\ref{f_value_descent2}) and $\|z_k-x_{k+1}+\mu^{-1}v_{k+1}\|$ in (\ref{use_iprox_def}) are small but $\|v_{k+1}\|$ is large. But when $\varepsilon_k$ is small, \Cref{iprox} also implies that $x_{k+1}-y_k \approx -\lambda_k v_{k+1}$, which further implies that $y_k-x_{k+1}+\mu^{-1}v_{k+1} \approx \left( \mu^{-1}+\lambda_k \right)v_{k+1}$ is large. This observation suggests that a contradiction is arrived at if we choose $y_k$ on the line segment connecting $x_k$ and $z_k$, i.e., $y_k=\tau x_k+(1-\tau)z_k$. Why? Since in this case, if $y_k-x_{k+1}+\mu^{-1}v_{k+1}$ is large, then we can directly deduce that (a convex combination) of the terms $\|x_k-x_{k+1}+\mu^{-1}v_{k+1}\|$ and $\|z_k-x_{k+1}+\mu^{-1}v_{k+1}\|$ is large using Cauchy-Schwarz. Remarkably, this argument suggests that we should choose $\tau$ such that $\tau:1-\tau$ is equal to ratio of the coefficients of $\|x_k-x_{k+1}+\mu^{-1}v_{k+1}\|^2$ and $\|z_k-x_{k+1}+\mu^{-1}v_{k+1}\|^2$. Therefore we can use these terms to cancel out the error induced by $v_{k+1}$, and ultimately attain the desired potential function descent, leading to Theorem~\ref{euclid_potential_decrease}.

%With this PPM based derivation and analysis of Euclidean A-HPE, we are now ready to move onto the main focus of our paper.

\section{From Euclidean to Riemannian A-HPE}
\label{section_riemann}
We are now ready to generalize Euclidean A-HPE to the Riemannian setting (more precisely, to Hadamard manifolds). In \Cref{preliminary_Riemann} we recall key notation for the Riemannian setting, and  \Cref{Riemann_analysis} is dedicated to the analysis of our proposed framework, Riemannian A-HPE.

\subsection{Riemannian preliminaries and notation}
\label{preliminary_Riemann}
We refer the readers to standard textbooks, e.g., \citep{lee2006riemannian,jost2008riemannian} for an in-depth introduction; we recall below key notation and concepts. 

A smooth manifold $\mathcal{M}$ is called a \emph{Riemannian manifold} if an inner product $\left\langle \cdot , \cdot \right\rangle_x$ is defined in the tangent space $T_{x}\mathcal{M}$ for all $x\in\mathcal{M}$ and the inner product varies smoothly in $x$. In this section, we use the notation $\left\langle \cdot , \cdot \right\rangle$ and omit the dependence on $x$, since it is clear from the context. We define $\| \cdot\|$ to be the norm induced by the inner product i.e., $\|v\|:=\sqrt{\left\langle v,v \right\rangle}$.

A curve on $\mathcal{M}$ is called a \emph{geodesic} if it is locally distance-minimizing. The \emph{exponential map}, denoted by $\mathtt{Exp}_x$, maps a vector $v \in T_{x}\mathcal{M}$ to a point $y \in\mathcal{M}$ such that there exists a geodesic $\gamma :[0,1] \to \mathcal{M}$ such that $\gamma(0)=x$, $\gamma(1)=y$ and $\gamma'(0) = v$. We assume that the sectional curvature of $\mathcal{M}$ is non-positive and lower bounded by $-K$, where $K$ is a positive real number. Under this assumption, any two points on $\mathcal{M}$ are connected by a unique geodesic, and thus the \emph{inverse exponential map} $\mathtt{Exp}_{x}^{-1}: \mathcal{M} \to T_{x}\mathcal{M}$ is well-defined. %In a Euclidean space, $\mathtt{Exp}_x(y)=x+y$ and $\mathtt{Exp}_{x}^{-1}(y) = y-x$. Hence the exponential map provides a generalization of basic arithmetic operations to the Riemannian setting.

We use $d(x,y)$ to denote the \emph{Riemannian distance} between $x$ and $y$. %, that is, the length of the geodesic connecting them.
The definition of exponential map implies that $d(x,y)=\left\| \mathtt{Exp}_{x}^{-1}(y) \right\|$. We will also use the \emph{tangent space distance}: $d_{w}(x,y):=\| \mathtt{Exp}_{w}^{-1}(x)-\mathtt{Exp}_{w}^{-1}(y)\|$. Note that $d_w(x,y) \leq d(x,y)$ for all $w,x,y \in\mathcal{M}$.
% For functions on Riemannian manifolds, we can define the notion of geodesic convexity, which is a generalization of convexity in Euclidean spaces. Specifically,
We say that a function $f:\mathcal{M}\to\mathbb{R}$ is $\mu$-geodesically-strongly-convex for $\mu\geq 0$, if for any $x\in\mathcal{M}$ there exists a non-empty set $\partial f(x)$, such that for all $y \in\mathcal{M}$ and $v \in\partial f(x)$ we have
\begin{equation}
    \notag
    f(y)\geq f(x)+\langle v,\mathtt{Exp}_{x}^{-1}(y)\rangle + \tfrac{\mu}{2} d^2(x,y).%\left\| \mathtt{Exp}_{x}^{-1}(y) \right\|^2
\end{equation}
Thus $f_x(y) := f(x)+\langle v,\mathtt{Exp}_{x}^{-1}(y)\rangle + \frac{\mu}{2}d^2(x,y)$ is a lower approximation of $f$. 

We use $\Gamma_{x}^{y}$ to denote the \emph{parallel transport} from $T_{x}\mathcal{M}$ to $T_{y}\mathcal{M}$ along the geodesic connecting $x$ and $y$. Using parallel transport, we can define a natural generalization of $L$-smoothness to the Riemannian setting. %Parallel transport provides a natural way to compare vectors that lie in different tangent spaces.
We say that $f:\mathcal{M}\to\mathbb{R}$ is \emph{$L$-smooth} if for all $x,y \in\mathcal{M}$, we have % it has an $L$-Lipschitz gradient such that
\begin{equation}
    \left\| \Gamma_{x}^y \nabla f(x)- \nabla f(y) \right\| \leq L\cdot d(x,y).
\end{equation}

\subsection{The proposed Riemannian A-HPE framework}
\label{Riemann_analysis}
In this section, we first present a straightforward generalization of Euclidean A-HPE (\Cref{AHPE_Euclidean}) to the Riemannian setting---see \Cref{AHPE_Riemann}. Then, we introduce a number results useful in its convergence analysis. Our presentation largely follows the Euclidean setting, except for a number of new challenges posed by Riemannian geometry. Throughout, we assume that $f$ is $\mu$-geodesically strongly convex, and that the sectional curvature of $\mathcal{M}$ lies in $[-K,0]$.
\begin{algorithm}
\SetKwInOut{KIN}{Input}
\caption{\textbf{Riemannian} accelerated hybrid proximal extragradient method}
\label{AHPE_Riemann}
\KIN{Objective function $f$, initial point $x_0$, `reference' step size $\lambda>0$,  $\sigma_k\in(0,1)$ %bounded sequence $\{c_k\}$ 
and initial weights $A_0,B_0 \geq 0$}
$z_0 \gets x_0$\\
\For{$k=0,1,\cdots$}{
choose a valid distortion rate $\delta_k$ according to \Cref{ahn_distortion} \\
%choose $D_k > 0$ based on history information \\
%\STATE{set step size $\lambda_k = \min\left\{ \lambda,\frac{1-\sigma_k}{6\mu c_k^2 S_{K}^2\left( d(x_k,z_k)+D_k \right)} \right\}$ \quad ($S_{K}$ is defined in \Cref{contraction})} \\
$\theta_{k} \gets$ the smaller root of $B_k (1-\theta)^2=\mu\lambda_k\theta\left( (1-\theta)B_k+\frac{\mu}{2}\delta_k A_k \right)$\\
$B_{k+1} \gets \frac{B_k}{\theta_k\delta_k}$, $a_{k+1} \gets 2\mu^{-1}(1-\theta_k)B_{k+1}$ and $A_{k+1} \gets A_k+a_{k+1}$ \\
$y_{k} \gets \mathtt{Exp}_{x_k}\bigl( \frac{\theta_k a_{k+1}}{A_k+\theta a_{k+1}}\mathtt{Exp}_{x_k}^{-1}(z_k) \bigr)$ \\
choose $\left(x_{k+1}, v_{k+1}\right) \in \mathtt{iprox}_{f}^{w_{k+1}}(y_k,\lambda_k,\varepsilon_k)$ with $\varepsilon_{k} = \frac{\sigma_{k}^{2}}{2\left(1+\lambda_{k} \mu\right)^{2}}d_{w_{k+1}}^2(x_{k+1},y_k)$\\
%\STATE{choose $A_k > 0$ as an upper bound for $d(w_{k+1},x^*)$ and $\mathcal{B}_k=\left\{ \|x\|\leq A_k\right\}\subset\mathbb{R}^d$}\\
$z_{k+1} \gets \mathtt{Exp}_{w_{k+1}}\bigl( (1-\theta_k)\mathtt{Exp}_{w_{k+1}}^{-1}(x_{k+1})+\theta_k\mathtt{Exp}_{w_{k+1}}^{-1}(z_k) - \frac{1-\theta_k}{\mu} v_{k+1}\bigr)$\\
}
\end{algorithm}

Beyond the natural replacement of vector space operations with their Riemannian counterparts, there are two key differences between \Cref{AHPE_Euclidean} and \Cref{AHPE_Riemann}: (i) the latter uses a Riemannian version of the \emph{iprox} operator; and (ii) it uses additional  parameters ($B_k$ and $\delta_k$) in its updates. %A subtle difference that will became more transparent later is the use of the tangent space distance $d_{w_{k+1}}$ in Line~7 instead of the more ``obvious'' Riemannian distance. 
We define the Riemannian \emph{iprox} operator as follows.
\begin{definition}[Riemannian inexact proximal operator]
\label{Riemannian_iprox}
    For $x,y, w \in\mathcal{M}$, $v\in T_w \mathcal{M}$ and $\lambda,\varepsilon \geq 0$, we write $(x,v) \in \mathtt{iprox}_{f}^{w}(y,\lambda,\varepsilon)$
    if we have the inequality
    \begin{equation}
    \label{Riemann_iprox}
    %\begin{aligned}
        \frac{\|\mathtt{Exp}_w^{-1}(x)-\mathtt{Exp}_w^{-1}(y)+\lambda v\|^2}{2(1+\lambda\mu)^2} 
        +\frac{\lambda \bigl( f(x)-f(w)-\left\langle \mathtt{Exp}_w^{-1}(x),v \right\rangle + \frac{\mu}{2}d^2(x,w) \bigr)}{1+\lambda\mu} \leq \varepsilon,
    %\end{aligned}
    \end{equation}
    and $v-\mu\mathtt{Exp}_w^{-1}(x) \in\partial f(w)$.
\end{definition}
Key among the additional parameters is $\delta_k$, the \emph{distortion rate} that is used to model the non-linearity of the exponential map. The concept of distortion rate is not new, and was introduced in~\citep{ahn2020nesterov} to analyze potential function decrease. We will formally define it in \Cref{def_distortion_rate}. By setting $\delta_k = 0$ and $B_k = \frac{1+\mu A_k}{2}$, \Cref{AHPE_Riemann} recovers \Cref{AHPE_Euclidean} in the Euclidean setting. Before discussing technical details, let us give an informal statement of our main result. In subsequent sections, we sketch its proof and provide the formal statements, while full, rigorous proofs are deferred to~\Cref{riem_appendix}.

\begin{theorem}[informal version of~\Cref{simple_convergence_thm} and \Cref{informal_convergence_general}]
\label{informal}
Under mild conditions on the choice of $w_{k+1}$, for $\mu$-strongly convex and $L$-smooth function $f$, the following statements hold:
\begin{enumerate}[(1).]
    \item Suppose $w_{k+1}$ lies on the geodesic between $x_k$ and $z_k$, then the iterates $\{x_k\}$ generated by \Cref{AHPE_Riemann} eventually achieve acceleration (\emph{cf.}\ \Cref{def_eventual_acc}) with arbitrary initialization.
    \item In the general case, the iterates $\{x_k\}$ achieve acceleration as long as the initialization is in a $\mathcal{O}\bigl( K^{-\nicefrac{1}{2}}\bigl(\nicefrac{\mu}{L}\bigr)^{\nicefrac{3}{4}}\bigr)$ neighbourhood of $x^*$.
\end{enumerate}
\end{theorem}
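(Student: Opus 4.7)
The plan is to port the potential-function argument from Section 2.1 to the Riemannian setting, using a potential of the form $p_k := A_k(f(x_k)-f(x^*)) + B_k\, d^2(z_k, x^*)$ and proving $p_{k+1} \le p_k$ up to terms that can be absorbed by the algorithm's parameters. Since $A_k$ grows like $\prod_{i<k}(1+\max\{\mu\lambda_i,\sqrt{\mu\lambda_i}\})$ (cf.\ \Cref{euclidean_convergence}), monotone decrease of $p_k$ immediately yields eventual acceleration in the sense of \Cref{def_eventual_acc}.

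First I would derive Riemannian analogues of \Cref{informal_descent} and \Cref{informal_dist}. The descent bound carries over because it only uses geodesic strong convexity at the single anchor $w_{k+1}$ together with \Cref{Riemannian_iprox}. For the distance step, I would work entirely in the tangent space $T_{w_{k+1}}\mathcal{M}$: Line~8 of \Cref{AHPE_Riemann} is exactly the Euclidean proximal step in that tangent space applied to the linear lower-model $\langle v_{k+1}, \cdot\rangle + \tfrac{\mu}{2}\|\cdot\|^2$, so the Euclidean lemma applies verbatim to the pre-images. Passing back to $\mathcal{M}$ requires two comparisons: (i) $d^2(z_{k+1}, x^*) \le \delta_k^{-1}\,\|\mathtt{Exp}_{w_{k+1}}^{-1}(z_{k+1}) - \mathtt{Exp}_{w_{k+1}}^{-1}(x^*)\|^2$, the Ahn--Sra distortion that motivates $B_{k+1} = B_k/(\theta_k\delta_k)$; and (ii) a comparison between $d^2(z_k,x^*)$ and $\|\mathtt{Exp}_{w_{k+1}}^{-1}(z_k)-\mathtt{Exp}_{w_{k+1}}^{-1}(x^*)\|^2$, which is exactly the ``additional distortion'' foreshadowed in the introduction.

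Combining these with the Cauchy--Schwarz trick sketched in \Cref{overview}---choose the interpolation parameter so that $y_k$ sits on the pre-image geodesic of $x_k$--$z_k$ in $T_{w_{k+1}}\mathcal{M}$ at exactly the ratio that balances the coefficients of $\|x_k-x_{k+1}+\mu^{-1}v_{k+1}\|^2$ and $\|z_k-x_{k+1}+\mu^{-1}v_{k+1}\|^2$---should absorb the $\|v_{k+1}\|^2$ error, leaving $p_{k+1} \le p_k + E_k$, where $E_k$ collects only the additional-distortion contribution. In case~(1), when $w_{k+1}$ lies on the $x_k$--$z_k$ geodesic, the pre-images of $x_k$ and $z_k$ in $T_{w_{k+1}}\mathcal{M}$ are collinear and the Cauchy--Schwarz cancellation is exact with no geometric slack, so $E_k=0$ and the potential decrease holds globally, yielding the formal statement \Cref{simple_convergence_thm}.

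For case~(2), the general choice of $w_{k+1}$ forces $E_k$ to be nonzero, and by standard Hadamard comparison I expect a bound of the form $E_k \lesssim K\,\lambda_k\, d^2(z_k,x^*)\cdot d^2(x_k,z_k)$ (or an analogous quartic expression in the local radius). This can be absorbed into the $\sqrt{\mu/L}$ contraction term provided the iterates remain in a ball of radius $r$ with $K\,r^2 \lesssim \sqrt{\mu/L}$ when $\lambda_k=\Theta(1/L)$, i.e.\ $r = \mathcal{O}\bigl( K^{-1/2}(\mu/L)^{3/4}\bigr)$. I would close the argument by an induction that simultaneously (a) uses potential decrease to show $d(z_k,x^*)\le r$, and (b) feeds this radius bound back into the distortion estimate. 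The main obstacle I anticipate is the first bullet above: precisely isolating the additional distortion as a three-term comparison in $T_{w_{k+1}}\mathcal{M}$ (rather than the two-term comparison that suffices in \citep{ahn2020nesterov}), and verifying that it vanishes exactly under the collinearity hypothesis of case~(1) while admitting a clean quartic upper bound in case~(2). A secondary subtlety is that the Cauchy--Schwarz balancing must be compatible with the distortion budget $\delta_k$, since both quantities jointly determine $\theta_k$ and hence the effective acceleration rate.
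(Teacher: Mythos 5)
Your overall architecture matches the paper's: a potential $p_k$ with coefficients $A_k,B_k$, a distance lemma proved in $T_{w_{k+1}}\mathcal{M}$, a Cauchy--Schwarz balancing of the two approximate-PPM terms, an error term that vanishes when $w_{k+1}$ is on the $x_k$--$z_k$ geodesic (the paper's $y_k=y_k'$, Proposition~\ref{no_addis_prop}), and for the general case an induction on a ball of radius $\mathcal{O}(K^{-1/2}(\mu/L)^{3/4})$ with a quartic distortion error --- this is all essentially what the paper does (Lemmas~\ref{reference_distortion}, \ref{update_distortion}, \ref{addis_bound}, Theorem~\ref{convergence_general}). However, there is one genuine gap. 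You assert that once $p_{k+1}\le p_k$ is established, eventual acceleration follows "immediately" because $A_k$ grows like $\prod_i(1+\max\{\mu\lambda_i,\sqrt{\mu\lambda_i}\})$ as in \Cref{euclidean_convergence}. That growth rate is a Euclidean fact; in \Cref{AHPE_Riemann} the coefficient update is $B_{k+1}=B_k/(\theta_k\delta_k)$ with $\theta_k$ determined by a quadratic involving $\delta_k$, so the growth of $A_k$ is governed by the perturbed recursion $\delta_k\xi_{k+1}(\xi_{k+1}-\tfrac{\mu\lambda}{1+\mu\lambda})=\xi_k^2(1-\xi_{k+1})$ for $\xi_k=a_k/A_k$, and the accelerated ratio is attained only in the limit. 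Closing this requires (a) using potential decrease plus the boundedness hypothesis $d(w_{k+1},y_k)=\mathcal{O}(1)$ to show $\delta_k\to 1$ at a linear rate, and (b) a fixed-point/contraction argument with reference sequences to show $\xi_k\to\sqrt{\mu\lambda/(1+\mu\lambda)}$. This is the substance of the paper's proof of \Cref{simple_convergence_thm} (and of the lower bound $\xi_k\ge\tfrac{9}{10}\sqrt{\mu\lambda/(1+\mu\lambda)}$ carried through the induction in \Cref{convergence_general}); without it, monotonicity of $p_k$ gives no rate at all.

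A secondary issue: you misplace the "additional distortion." The comparison between $d^2(z_k,x^*)$ and $\|\mathtt{Exp}_{w_{k+1}}^{-1}(z_k)-\mathtt{Exp}_{w_{k+1}}^{-1}(x^*)\|^2$ is the \emph{standard} distortion already handled by $\delta_k$ and the choice $B_{k+1}=B_k/(\theta_k\delta_k)$ (the paper sidesteps it further by putting $d_{w_k}^2(z_k,x^*)$, not $d^2(z_k,x^*)$, into the potential). The genuinely new obstruction is that the balancing point produced by the interpolation identity lives in $T_{w_{k+1}}\mathcal{M}$ (the paper's $y_k'$), whereas the algorithm must commit to $y_k$ on the manifold \emph{before} $w_{k+1}$ exists --- so your prescription to place $y_k$ "on the pre-image geodesic in $T_{w_{k+1}}\mathcal{M}$" is circular as an update rule. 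The paper resolves this by defining $y_k$ via $\mathtt{Exp}_{x_k}$ and bounding $d_{w_{k+1}}(y_k,y_k')$ with a Jacobi-field estimate (\Cref{contraction}); your case-(1) collinearity observation and case-(2) quartic bound are then correct, but they apply to this quantity, not to the distance comparison you named.
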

As we will see later, the Riemannian analogs of Nesterov's method considered in \citep{zhang2018estimate} and \citep{ahn2020nesterov} belong to the first case in \Cref{informal}, and thus, follow as corollaries of our main result. We will also discuss additional instances of each case in \Cref{sec_1st_order}.

\subsection{Potential Function Analysis for Riemannian A-HPE}
Similar to the Euclidean setting, we define the potential function
\begin{equation}
  \label{eq:2}
  p_k = A_k\cdot (f(x_k)-f(x^*))+B_k\cdot d_{w_k}^2(z_k,x^*).
\end{equation}
Note that in the above definiton, we use the tangent space distance $d_{w_k}$ rather than the Riemannian distance $d$. Indeed, when generalizing our analysis to the Riemannian setting, we need to work with vectors in tangent spaces, so that it is more convenient to use the tangent space distance here.

Our Euclidean analysis is based on a separate analysis of two approximate PPM schemes, one leading to \Cref{informal_descent} for function value, and the other leading to \Cref{informal_dist} for distance to $x^*$. While it is straightforward to generalize \Cref{informal_descent} to the Riemannian setting by using strong-convexity and \Cref{Riemannian_iprox}, it is hard to bound the distance term $B_k\cdot d_{w_k}^2(z_k,x^*)-B_{k+1}\cdot d_{w_{k+1}}^2(z_{k+1},x^*)$ because it involves vectors in two different tangent spaces $T_{w_k}\mathcal{M}$ and $T_{w_{k+1}}\mathcal{M}$. %To overcome this issue we need to introduce a distortion rate as follows.
Taking cue from~\citep{ahn2020nesterov}, we also use the notion of \emph{distortion rates} to overcome this issue.

\begin{definition}
\label{def_distortion_rate}
We say that $\delta_k>0$ is a \emph{valid distortion rate} if $d_{w_{k+1}}^2(z_k,x^*) \leq \delta_k d_{w_k}^2(z_k,x^*)$.
\end{definition}

To be able to use valid distortion rates in an actual algorithm, it is crucial to avoid dependence on the unknown optimal point $x^*$. To that end, the next lemma shows that one can obtain a valid distortion rate in terms of $d(w_k,z_k)$ instead. %, , and independent of $x^*$ which we do not know during the optimization process.
\begin{lemma}[\protect{\citep[Lemma 4.1]{ahn2020nesterov}}]
\label{ahn_distortion} For any points $x,y,z \in\mathcal{M}$, we have $d^2(x,y) \leq T_{K}(d(x,z))d_z^2(x,y)$, where the function $T_K(\cdot)$ is defined as
\begin{equation}
\notag
T_{K}(r):= \begin{cases}\max \bigl\{1+4\bigl(\frac{\sqrt{K} r}{\tanh (\sqrt{K} r)}-1\bigr),\bigl(\frac{\sinh (2 \sqrt{K} \cdot r)}{2 \sqrt{K} \cdot r}\bigr)^{2}\bigr\}, & \text { if } r>0, \\ 1, & \text { if } r=0.\end{cases}
\end{equation}
In particular, $\delta_k = T_{K}(d(w_k,z_k))$ is a valid distortion rate.
\end{lemma}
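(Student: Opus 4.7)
The plan is to first reduce the ``In particular'' clause to the main comparison inequality, and then to prove the main comparison via Toponogov's theorem together with an analytic estimate on the hyperbolic law of cosines.

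For the reduction, on a Hadamard manifold the inverse exponential map is $1$-Lipschitz, i.e.\ $d_w(p,q)\le d(p,q)$ for all $w,p,q\in\mathcal{M}$, as already noted in Section~3.1. Applying this at $w=w_{k+1}$ with $(p,q)=(z_k,x^*)$ and then the main inequality with $(x,y,z)=(z_k,x^*,w_k)$ yields
\[
d_{w_{k+1}}^2(z_k,x^*)\;\le\;d^2(z_k,x^*)\;\le\;T_K(d(z_k,w_k))\,d_{w_k}^2(z_k,x^*),
\]
which is precisely the validity of $\delta_k=T_K(d(w_k,z_k))$.

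For the main inequality I would set $b:=d(x,z)$, $a:=d(y,z)$, $c:=d(x,y)$, and let $\theta\in[0,\pi]$ be the angle at $z$ between $\mathtt{Exp}_z^{-1}(x)$ and $\mathtt{Exp}_z^{-1}(y)$. The Euclidean law of cosines in $T_z\mathcal{M}$ gives $d_z^2(x,y)=a^2+b^2-2ab\cos\theta$, while Toponogov's comparison theorem for manifolds of sectional curvature $\ge -K$ gives the hyperbolic-law-of-cosines upper bound
\[
\cosh(\sqrt{K}\,c)\;\le\;\cosh(\sqrt{K}a)\cosh(\sqrt{K}b)-\sinh(\sqrt{K}a)\sinh(\sqrt{K}b)\cos\theta.
\]
After rescaling so that $K=1$, the whole task reduces to the purely analytic inequality
\[
\operatorname{arccosh}^2\bigl(\cosh a\cosh b-\sinh a\sinh b\cos\theta\bigr)\;\le\;T_K(b)\bigl(a^2+b^2-2ab\cos\theta\bigr)
\]
for all $a\ge 0$ and $\theta\in[0,\pi]$.

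I would then split on the sign of $\cos\theta$. In the ``acute'' regime ($\cos\theta\ge 0$) the geodesic from $x$ to $y$ stays near $z$, and a Jacobi-field/Rauch comparison applied along this geodesic gives the bound with factor $1+4(\sqrt{K}b/\tanh(\sqrt{K}b)-1)$, matching the first term in the $\max$ defining $T_K(b)$. In the ``obtuse'' regime ($\cos\theta<0$) the extremal configuration is (nearly) collinear with $z$ between $x$ and $y$, and one compares directly using the hyperbolic law of cosines together with the fact that $\mathtt{Exp}_z$ expands tangent vectors by at most the Jacobi factor $\sinh(2\sqrt{K}b)/(2\sqrt{K}b)$, yielding the second term. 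The main obstacle is the obtuse regime: since the crude bound $c\le a+b$ is tight only at $\theta=\pi$, one cannot use it directly; instead one has to work with the exact hyperbolic identity, reduce by monotonicity in $a$ to an extremal configuration, and check that the two regimes join seamlessly under the $\max$ uniformly in $(a,\theta)$.
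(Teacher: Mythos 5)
Note that the paper does not prove this lemma; it is a direct citation to Ahn and Sra (2020, Lemma 4.1), so there is no in-paper proof to compare against. Your reduction of the ``In particular'' clause is correct: $d_{w_{k+1}}^2(z_k,x^*)\le d^2(z_k,x^*)\le T_K(d(w_k,z_k))\,d_{w_k}^2(z_k,x^*)$ follows from $d_w\le d$ and one application of the main inequality with base point $w_k$.

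However, your treatment of the main comparison inequality is a program with an acknowledged gap, not a proof. Two specific difficulties stand out. First, the bound $T_K(b)$ depends only on $b=d(x,z)$ while $a=d(y,z)$ can be arbitrarily large; your heuristic that ``in the acute regime the geodesic from $x$ to $y$ stays near $z$'' already fails for $\theta=\pi/2$ and large $a$, so it is unclear how a Rauch argument along that geodesic could yield a bound independent of $a$. Second, the claimed Jacobi expansion factor $\sinh(2\sqrt{K}b)/(2\sqrt{K}b)$ for $\mathtt{Exp}_z$ is not justified: Rauch controls the differential of $\mathtt{Exp}_z$ at radius $\rho$ by $\sinh(\sqrt{K}\rho)/(\sqrt{K}\rho)$, and the tangent-space segment joining $\mathtt{Exp}_z^{-1}(x)$ to $\mathtt{Exp}_z^{-1}(y)$ reaches radius $\max\{a,b\}$, not $2b$, so the naive curve-length argument gives a factor depending on $a$ rather than on $b$ alone. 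You explicitly call the obtuse regime ``the main obstacle'' without resolving it, and the acute side is similarly uncertain. Until the scalar inequality
\[
\operatorname{arccosh}^2\bigl(\cosh a\cosh b - \sinh a\sinh b\cos\theta\bigr)\le T_K(b)\bigl(a^2+b^2-2ab\cos\theta\bigr)
\]
is verified uniformly in $(a,\theta)$ and shown to match the stated two-branch $T_K$, this is a plausible starting point but not a proof; in the context of this paper the appropriate move is simply to cite Ahn and Sra's Lemma 4.1.
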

\noindent Assuming access to valid distortion rates, we can obtain the Riemannian analog to \Cref{informal_dist}.

\begin{lemma}
\label{reference_distortion}
Suppose that $\delta_k>0$ is a valid distortion rate, and $B_{k+1} = \frac{B_k}{\theta_k\delta_k}$, then 
\begin{equation}
    \notag
    \begin{aligned}
    B_{k}d_{w_k}^2(z_k,x^*)-B_{k+1}d_{w_{k+1}}^2(z_{k+1},x^*)
    &\geq (1-\theta_k)B_{k+1}\bigl(\tfrac{2}{\mu}(f(w_{k+1})-f(x^*))-\tfrac{1}{\mu^2}\|\nabla_{k+1}\|^2  \\
    &\quad  + \theta_k\bigl\| \mathtt{Exp}_{w_{k+1}}^{-1}(z_k)-\mathtt{Exp}_{w_{k+1}}^{-1}(x_{k+1})+\mu^{-1}v_{k+1} \bigr\|^2\bigr) .
    \end{aligned}
\end{equation}
\end{lemma}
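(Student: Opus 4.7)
The plan is to work entirely in the tangent space $T_{w_{k+1}}\mathcal{M}$, where the geometry is linear and the whole argument reduces to a standard three-point convex-combination identity, just as in the Euclidean proof of \Cref{informal_dist}. The role of the valid distortion rate is exactly to let us pull $B_k d_{w_k}^2(z_k,x^*)$ into $T_{w_{k+1}}\mathcal{M}$; everything else is algebra driven by line~8 of \Cref{AHPE_Riemann} and by $\mu$-geodesic strong convexity of $f$ at $w_{k+1}$.

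First I would introduce the shorthand $\tilde{p} := \mathtt{Exp}_{w_{k+1}}^{-1}(p)$ for $p\in\{z_k,x_{k+1},z_{k+1},x^*\}$, and combine the definition of valid distortion rate with $B_{k+1}=B_k/(\theta_k\delta_k)$ to obtain
\begin{equation*}
B_k d_{w_k}^2(z_k,x^*) - B_{k+1} d_{w_{k+1}}^2(z_{k+1},x^*) \;\geq\; B_{k+1}\bigl[\theta_k \|\tilde{z}_k-\tilde{x}^*\|^2 - \|\tilde{z}_{k+1}-\tilde{x}^*\|^2\bigr].
\end{equation*}
From the update in line~8, $\tilde{z}_{k+1} = \theta_k\tilde{z}_k + (1-\theta_k)(\tilde{x}_{k+1}-\mu^{-1}v_{k+1})$. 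A small but crucial observation is that, writing $\nabla_{k+1} := v_{k+1} - \mu\tilde{x}_{k+1}$ (so that $\nabla_{k+1}\in\partial f(w_{k+1})$ by \Cref{Riemannian_iprox}), we have $\tilde{x}_{k+1} - \mu^{-1}v_{k+1} = -\mu^{-1}\nabla_{k+1}$, whence $\tilde{z}_{k+1} = \theta_k\tilde{z}_k - (1-\theta_k)\mu^{-1}\nabla_{k+1}$.

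Second, I would apply the three-point identity $\|\theta a+(1-\theta)b-c\|^2 = \theta\|a-c\|^2+(1-\theta)\|b-c\|^2 - \theta(1-\theta)\|a-b\|^2$ with $a=\tilde{z}_k$, $b=-\mu^{-1}\nabla_{k+1}$, $c=\tilde{x}^*$ to get
\begin{equation*}
\theta_k\|\tilde{z}_k-\tilde{x}^*\|^2 - \|\tilde{z}_{k+1}-\tilde{x}^*\|^2 = (1-\theta_k)\bigl[\theta_k\|\tilde{z}_k+\mu^{-1}\nabla_{k+1}\|^2 - \|\tilde{x}^*+\mu^{-1}\nabla_{k+1}\|^2\bigr].
\end{equation*}
The first squared norm on the right already equals $\|\mathtt{Exp}_{w_{k+1}}^{-1}(z_k)-\mathtt{Exp}_{w_{k+1}}^{-1}(x_{k+1})+\mu^{-1}v_{k+1}\|^2$, since $\tilde{z}_k+\mu^{-1}\nabla_{k+1} = \tilde{z}_k - \tilde{x}_{k+1} + \mu^{-1}v_{k+1}$. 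For the second, I would expand $\|\tilde{x}^*+\mu^{-1}\nabla_{k+1}\|^2 = \|\tilde{x}^*\|^2 + \tfrac{2}{\mu}\langle \nabla_{k+1},\tilde{x}^*\rangle + \tfrac{1}{\mu^2}\|\nabla_{k+1}\|^2$ and invoke $\mu$-geodesic strong convexity at $w_{k+1}$, $f(x^*)\geq f(w_{k+1}) + \langle \nabla_{k+1},\tilde{x}^*\rangle + \tfrac{\mu}{2}\|\tilde{x}^*\|^2$ (recall $\|\tilde{x}^*\|=d(w_{k+1},x^*)$), to conclude $-\|\tilde{x}^*+\mu^{-1}\nabla_{k+1}\|^2 \geq \tfrac{2}{\mu}(f(w_{k+1})-f(x^*)) - \tfrac{1}{\mu^2}\|\nabla_{k+1}\|^2$. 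Stitching the three displays together delivers the claimed bound.

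The main obstacle is less any single inequality than the bookkeeping: the distortion step forces everything into $T_{w_{k+1}}\mathcal{M}$, which is precisely why $z_{k+1}$ in \Cref{AHPE_Riemann} is defined through a \emph{linear} combination in $T_{w_{k+1}}\mathcal{M}$ rather than a geodesic combination of $z_k$ and $x_{k+1}$, and why $B_{k+1}=B_k/(\theta_k\delta_k)$ is taken so that the residual coefficient in front of $\|\tilde{z}_k-\tilde{x}^*\|^2$ is exactly $\theta_k$. Only with these matched choices does the three-point identity generate precisely the squared norms that strong convexity of $f$ at $w_{k+1}$ absorbs; any other parameter tuning would leave uncontrolled cross-terms.
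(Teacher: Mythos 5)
Your proof is correct and follows essentially the same route as the paper's: both proofs pull $B_k d_{w_k}^2(z_k,x^*)$ into $T_{w_{k+1}}\mathcal{M}$ via the distortion rate and the identity $\theta_k B_{k+1}=B_k/\delta_k$, then apply the interpolation identity of \Cref{simple} (your three-point identity is the same identity up to the substitution $p=\theta_k,\ q=1-\theta_k$) to the tangent-space vectors, using line~8 to identify $\tilde z_{k+1}$ as a convex combination, and finally absorb $-\|\tilde x^*+\mu^{-1}\nabla_{k+1}\|^2$ by $\mu$-geodesic strong convexity at $w_{k+1}$. Your write-up is slightly cleaner than the paper's in that it keeps the middle manipulation explicitly in terms of tangent vectors rather than writing $d_{w_{k+1}}(\cdot,\cdot)$ as if it were a signed scalar, but it is the same argument.
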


Now, it remains to combine the two PPM schemes (based on function value and distance to $x^*$) to obtain a bound for the potential function. Specifically, we can prove the following key lemma.
\begin{lemma}
\label{update_distortion}
Let $a_{k+1}= A_{k+1}-A_k= \frac{2}{\mu}(1-\theta_k)B_{k+1}$, and $p_k$ be given by~\eqref{eq:2}. Then,
\begin{equation}
\label{update_distortion_eq}
    \begin{aligned}
    p_k-p_{k+1} &\geq \tfrac{\mu}{2}(\theta_k a_{k+1}+A_k)\bigl\| \mathtt{Exp}_{w_{k+1}}^{-1}(y_k')-\mathtt{Exp}_{w_{k+1}}^{-1}(x_{k+1})+\mu^{-1}v_{k+1} \bigr\|^2 \\
    &+\tfrac{A_{k+1}}{2\lambda_k\sigma_k}\bigl\| \mathtt{Exp}_{w_{k+1}}^{-1}(y_k)-\mathtt{Exp}_{w_{k+1}}^{-1}(x_{k+1})-\lambda_k v_{k+1} \bigr\|^2 \\
    &+ \tfrac{\mu\theta_{k}a_{k+1}A_k}{2(A_k+\theta_{k}a_{k+1})}d_{w_{k+1}}^2(x_k,z_k) - \tfrac{\sigma_k A_{k+1}}{2\lambda_k}d_{w_{k+1}}^2(x_{k+1},y_k)-\tfrac{A_{k+1}}{2\mu}\|v_{k+1}\|^2,
    \end{aligned}
\end{equation}
where 
\begin{equation}
    \label{def_yk_prime}
    y_k' = \mathtt{Exp}_{w_{k+1}}\bigl( \tfrac{A_k}{A_k+\theta_{k}a_{k+1}}\mathtt{Exp}_{w_{k+1}}^{-1}(x_k)+\tfrac{\theta_{k}a_{k+1}}{A_k+\theta_{k}a_{k+1}}\mathtt{Exp}_{w_{k+1}}^{-1}(z_k) \bigr).
\end{equation}
\end{lemma}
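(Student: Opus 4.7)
The strategy is to follow the blueprint of the Euclidean analysis in Section~\ref{section_euclidean}, but to carry out all manipulations inside the single tangent space $T_{w_{k+1}}\mathcal{M}$. I would write $u_x,u_z,u_0,u_y$ for the inverse exponentials $\mathtt{Exp}_{w_{k+1}}^{-1}$ of $x_k,z_k,x_{k+1},y_k$ respectively, and $u' := \mathtt{Exp}_{w_{k+1}}^{-1}(y_k') = \tfrac{A_k u_x + \theta_k a_{k+1} u_z}{A_k+\theta_k a_{k+1}}$ for the reference point defined in~\eqref{def_yk_prime}. Set $\nabla_{k+1} := v_{k+1}-\mu u_0 \in \partial f(w_{k+1})$, as prescribed by \Cref{Riemannian_iprox}. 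Using $A_{k+1}=A_k+a_{k+1}$, I would decompose
\begin{equation*}
p_k-p_{k+1} = A_k\bigl[f(x_k)-f(x_{k+1})\bigr] - a_{k+1}\bigl[f(x_{k+1})-f(x^*)\bigr] + \bigl[B_k d_{w_k}^2(z_k,x^*)-B_{k+1}d_{w_{k+1}}^2(z_{k+1},x^*)\bigr],
\end{equation*}
and bound the three bracketed terms in turn.

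For the first bracket I would apply geodesic $\mu$-strong convexity of $f$ at $w_{k+1}$, giving $f(x_k)\geq f(w_{k+1})+\langle \nabla_{k+1},u_x\rangle+\tfrac{\mu}{2}\|u_x\|^2$, together with the inexact prox inequality~\eqref{Riemann_iprox} used to upper bound $f(x_{k+1})-f(w_{k+1})$. Completing the square collapses the linear cross terms and yields a bound of the form $A_k[f(x_k)-f(x_{k+1})] \geq \tfrac{\mu A_k}{2}\|u_x-u_0+\mu^{-1}v_{k+1}\|^2 - \tfrac{A_k}{2\mu}\|v_{k+1}\|^2$ plus $\varepsilon_k$-terms and a positive multiple of $\|u_0-u_y+\lambda_k v_{k+1}\|^2$. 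For the distance bracket I would invoke \Cref{reference_distortion} directly, obtaining a bound in terms of $f(w_{k+1})-f(x^*)$, $\|\nabla_{k+1}\|^2$ and $\|u_z-u_0+\mu^{-1}v_{k+1}\|^2$; applying the inexact prox a second time converts $f(w_{k+1})$ into $f(x_{k+1})$, and the algebraic identity $\|\nabla_{k+1}\|^2=\|v_{k+1}\|^2-2\mu\langle u_0,v_{k+1}\rangle+\mu^2\|u_0\|^2$ cancels exactly against the $\langle u_0,v_{k+1}\rangle$ and $\|u_0\|^2$ terms produced by the conversion, leaving an overall $-\tfrac{a_{k+1}}{2\mu}\|v_{k+1}\|^2$ plus $\varepsilon_k$-terms of the same shape as above.

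Adding the two bounds, the coefficients of $\|u_x-u_0+\mu^{-1}v_{k+1}\|^2$ and $\|u_z-u_0+\mu^{-1}v_{k+1}\|^2$ come out to $\tfrac{\mu A_k}{2}$ and $\tfrac{\mu\theta_k a_{k+1}}{2}$, which is exactly the ratio demanded by the definition of $u'$. The identity
\begin{equation*}
\alpha\|a-p\|^2+\beta\|b-p\|^2 = (\alpha+\beta)\bigl\|\tfrac{\alpha a+\beta b}{\alpha+\beta}-p\bigr\|^2 + \tfrac{\alpha\beta}{\alpha+\beta}\|a-b\|^2,
\end{equation*}
applied with $\alpha=A_k$, $\beta=\theta_k a_{k+1}$, $a=u_x$, $b=u_z$, $p=u_0-\mu^{-1}v_{k+1}$, then collapses the two quadratics into $\tfrac{\mu(A_k+\theta_k a_{k+1})}{2}\|u'-u_0+\mu^{-1}v_{k+1}\|^2$ plus the extra positive term $\tfrac{\mu\theta_k a_{k+1}A_k}{2(A_k+\theta_k a_{k+1})}d_{w_{k+1}}^2(x_k,z_k)$. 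A final cosmetic rearrangement rewrites the $\varepsilon_k$-penalty and the $\|u_0-u_y+\lambda_k v_{k+1}\|^2$-reward into the cleaner form $\tfrac{A_{k+1}}{2\lambda_k\sigma_k}\|u_y-u_0-\lambda_k v_{k+1}\|^2 - \tfrac{\sigma_k A_{k+1}}{2\lambda_k}d_{w_{k+1}}^2(x_{k+1},y_k)$, which is legitimate because the inexact prox guarantees $\|u_0-u_y+\lambda_k v_{k+1}\|^2\leq \sigma_k^2 d_{w_{k+1}}^2(x_{k+1},y_k)$ and $\sigma_k\leq 1\leq 1+\lambda_k\mu$.

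The main obstacle will be the matching condition in the combination step: to use the convex-combination identity above, the coefficient ratio produced by the two separate analyses must equal the weights defining $u'$. Reverse-engineering this condition is precisely what forces the updates $a_{k+1}=\tfrac{2}{\mu}(1-\theta_k)B_{k+1}$ and $B_{k+1}=B_k/(\theta_k\delta_k)$ in Lines~4--5 of \Cref{AHPE_Riemann}. A secondary conceptual difficulty is that $v_{k+1}\in T_{w_{k+1}}\mathcal{M}$, so strong convexity is naturally applied with basepoint $w_{k+1}$; this pins the entire quadratic analysis into $T_{w_{k+1}}\mathcal{M}$ and is precisely the reason the ``pulled-back'' reference $y_k'$, rather than the algorithmic $y_k$ (which lies on the geodesic from $x_k$ to $z_k$ in $\mathcal{M}$), must appear in the main quadratic descent term.
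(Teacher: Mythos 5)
Your proposal is correct and follows essentially the same route as the paper's proof: decompose $p_k-p_{k+1}$ by telescoping through $f(w_{k+1})$, apply geodesic strong convexity at $w_{k+1}$, \Cref{reference_distortion} for the distance terms, and the Riemannian \emph{iprox} inequality, then merge the two quadratics in $T_{w_{k+1}}\mathcal{M}$ via the interpolation identity (\Cref{simple}), which is exactly what forces $y_k'$ into the bound. The only step worth spelling out is that your final rearrangement's input $\|u_0-u_y+\lambda_k v_{k+1}\|^2\le\sigma_k^2\, d_{w_{k+1}}^2(x_{k+1},y_k)$ is not a direct consequence of \Cref{Riemannian_iprox} alone but of combining it with strong convexity at $w_{k+1}$ evaluated at $x_{k+1}$ (so that the bracketed function-value term is nonnegative), the Riemannian analog of the step producing \eqref{simpler}.
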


The main feature of \Cref{update_distortion} is the presence of a new point $y_k'$ in  \Cref{update_distortion_eq}. In the Euclidean setting, we combined the two approximate PPM schemes by choosing $y_k$ on the line segment between $x_k$ and $z_k$. Generalizing this update rule to the Riemannian setting, we naturally choose $y_k$ on the geodesic connecting $x_k$ and $z_k$. However, here a subtle complication arises: since we are working with vectors in the tangent space $\mathcal{T}_{w_{k+1}}$, what we really want is that $\mathtt{Exp}_{w_{k+1}}^{-1}(y_k)$ be a convex combination of $\mathtt{Exp}_{w_{k+1}}^{-1}(x_k)$  and $\mathtt{Exp}_{w_{k+1}}^{-1}(z_k)$. This subtlety explains why $y_k'$ as defined by~\eqref{def_yk_prime} appears in the bound~\eqref{update_distortion_eq}. Further, note that since $y_k'$ depends on $w_{k+1}$, it \emph{cannot} be used as the update rule of $y_k$.

In Euclidean A-HPE, $y_k'$ does not complicate matters since we always have $y_k=y_k'$. However, $y_k \neq y_k'$ in general for the Riemannian setting, which prevents us from mimicking the Euclidean analysis. Indeed, \Cref{update_distortion} highlights an \emph{additional distortion} that arises for Riemannian A-HPE and is not present in previous works that focus on Nesterov's method~\citep{zhang2018estimate,ahn2020nesterov}. Indeed, the algorithms analyzed in these previous works are a special case of the general A-HPE framework, where the particular specialization of the updates bypasses the additional distortion that arises more generally. We expand on these observations below.

\subsection{Basic A-HPE: convergence without additional distortion}
\label{sec_without_distortion}
We first consider the special case where $y_k = y_k'$. This equality holds as long as $w_{k+1}$ is chosen on the geodesic connecting $x_k$ and $z_k$. In this case, we can derive potential decrease from \Cref{update_distortion} by using an analysis similar to the Euclidean setting. Doing so, we obtain the following main result regarding the convergence rate of Riemannian A-HPE.
\begin{theorem}
\label{simple_convergence_thm}
Suppose in \Cref{AHPE_Riemann}, we choose $\lambda_k = \lambda$ and $w_{k+1}$ lies on the geodesic connecting $x_k$ and $z_k$ such that $d(w_{k+1},y_k) = \mathcal{O}(1)$, then we have
\begin{equation}
    \label{convergence_ineq}
    \begin{aligned}
    f(x_k)-f(x^*) &\leq \nicefrac{p_0}{A_k},\quad\text{and}\quad
    \lim_{k\to +\infty}\nicefrac{A_{k+1}}{A_k} = 1+\mu\lambda + \sqrt{\mu\lambda(1+\mu\lambda)}.
    \end{aligned}
\end{equation}
\end{theorem}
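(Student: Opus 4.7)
\textbf{Proof proposal for \Cref{simple_convergence_thm}.}
The first conclusion, $f(x_k)-f(x^*)\leq p_0/A_k$, will follow directly from the monotonicity $p_{k+1}\leq p_k$ together with the trivial bound $A_k(f(x_k)-f(x^*))\leq p_k$. Hence the heart of the proof is to establish the potential decrease, and then to analyze the scalar recurrence that produces the limiting ratio.

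First I would verify that the hypothesis ``$w_{k+1}$ lies on the geodesic connecting $x_k$ and $z_k$'' forces $y_k=y_k'$. Indeed, since $x_k$, $z_k$, $w_{k+1}$ all lie on a single geodesic, the vectors $\mathtt{Exp}_{w_{k+1}}^{-1}(x_k)$ and $\mathtt{Exp}_{w_{k+1}}^{-1}(z_k)$ are collinear through the origin of $T_{w_{k+1}}\mathcal{M}$; the affine parameter along the geodesic pulls back linearly to this line, so the convex combination in \eqref{def_yk_prime} of $y_k'$ coincides with the convex combination at $x_k$ used to define $y_k$ in Line~6 of \Cref{AHPE_Riemann}. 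Once $y_k=y_k'$, I would plug into \Cref{update_distortion} and mirror the Euclidean argument from \Cref{overview}: write $\mathtt{Exp}_{w_{k+1}}^{-1}(y_k)-\mathtt{Exp}_{w_{k+1}}^{-1}(x_{k+1})+\mu^{-1}v_{k+1}$ as the tangent-space convex combination $\tfrac{A_k}{A_k+\theta_k a_{k+1}}(\mathtt{Exp}_{w_{k+1}}^{-1}(x_k)-\mathtt{Exp}_{w_{k+1}}^{-1}(x_{k+1})+\mu^{-1}v_{k+1})+\tfrac{\theta_k a_{k+1}}{A_k+\theta_k a_{k+1}}(\mathtt{Exp}_{w_{k+1}}^{-1}(z_k)-\mathtt{Exp}_{w_{k+1}}^{-1}(x_{k+1})+\mu^{-1}v_{k+1})$, apply convexity of $\|\cdot\|^2$, and combine with the near-identity $\mathtt{Exp}_{w_{k+1}}^{-1}(x_{k+1})-\mathtt{Exp}_{w_{k+1}}^{-1}(y_k)\approx -\lambda_k v_{k+1}$ coming from \Cref{Riemannian_iprox} (the residual being controlled by the $\sigma_k$-term already present in \Cref{update_distortion_eq}). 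The term $d_{w_{k+1}}^2(x_k,z_k)$ cancels the cross term produced by the Cauchy--Schwarz step, and the defining quadratic for $\theta_k$ is precisely what balances the resulting coefficients so that $p_k-p_{k+1}\geq 0$.

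For the asymptotic ratio, I would first show that $\delta_k\to 1$. From $p_k\leq p_0$ we get $d^2(x_k,x^*)\leq 2(f(x_k)-f(x^*))/\mu \to 0$ and $d_{w_k}^2(z_k,x^*)\leq p_0/B_k$; the recurrence $B_{k+1}=B_k/(\theta_k\delta_k)$ together with $\theta_k\delta_k<1$ (forced once the iterates localize and $\delta_k$ shrinks toward $1$) shows $B_k\to\infty$, hence $d_{w_k}(z_k,x^*)\to 0$; by \Cref{ahn_distortion} and boundedness of $d(z_k,w_k)$ (which follows from $d(w_{k+1},y_k)=\mathcal{O}(1)$ and the location of $y_k$ on $[x_k,z_k]$) we upgrade this to $d(z_k,x^*)\to 0$, which yields $d(w_k,z_k)\to 0$ and hence $\delta_k\to 1$. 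Then, positing a steady state $A_{k+1}/A_k\to r$, $B_{k+1}/B_k\to r$, the recurrence $B_{k+1}=B_k/(\theta_k\delta_k)$ gives $\theta^*=1/r$, and the identity $a_{k+1}=2\mu^{-1}(1-\theta_k)B_{k+1}$ combined with $a_{k+1}=A_{k+1}-A_k$ forces the ratio $B_k/A_k\to \mu/2$. Substituting into the defining quadratic $B(1-\theta)^2=\mu\lambda\theta((1-\theta)B+\tfrac{\mu}{2}A)$ reduces it to $(1-\theta)^2=\mu\lambda\theta(2-\theta)$, i.e.\ $r^2-2(1+\mu\lambda)r+(1+\mu\lambda)=0$, whose larger root is $r=1+\mu\lambda+\sqrt{\mu\lambda(1+\mu\lambda)}$ (matching the smaller root $\theta^*=1/r$ chosen by the algorithm).

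The main obstacle I anticipate is the transition to $\delta_k\to 1$: one has to simultaneously track convergence of $x_k$ and $z_k$ to $x^*$, bound $d(w_k,z_k)$ uniformly using the hypothesis $d(w_{k+1},y_k)=\mathcal{O}(1)$, and then apply \Cref{ahn_distortion} to translate tangent-space convergence into manifold convergence. The step in the potential-decrease argument itself is essentially an algebraic repeat of the Euclidean calculation in \Cref{overview} once one accepts that all relevant vectors live in $T_{w_{k+1}}\mathcal{M}$, but verifying that the $\sigma_k$-slack and the $d_{w_{k+1}}^2(x_k,z_k)$ slack absorb the errors cleanly under the specific $\theta_k$ defined in \Cref{AHPE_Riemann} will require the careful bookkeeping deferred to \Cref{riem_appendix}.
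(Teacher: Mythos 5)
Your overall structure mirrors the paper: establish $y_k=y_k'$ from collinearity (\Cref{no_addis_prop}), deduce potential decrease by replaying the Euclidean calculation in $T_{w_{k+1}}\mathcal{M}$ (\Cref{rescale}/\Cref{cor_simple}), show $\delta_k\to 1$ by localizing the iterates, and extract the limiting ratio from the scalar recursion $\delta_k\,\xi_{k+1}\bigl(\xi_{k+1}-\tfrac{\mu\lambda}{1+\mu\lambda}\bigr)=\xi_k^2(1-\xi_{k+1})$. Your computation of the fixed point is also correct and matches the paper's value $r=1+\mu\lambda+\sqrt{\mu\lambda(1+\mu\lambda)}$. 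However, there is a genuine gap in the final step.

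You \emph{posit} a steady state $A_{k+1}/A_k\to r$, $B_{k+1}/B_k\to r$ and then solve for $r$; this identifies the \emph{only} possible limit but does not prove the limit exists. The recursion for $\xi_k$ is nonlinear and perturbed by the time-varying $\delta_k$, so convergence is not automatic. The paper addresses exactly this: it writes the recursion as $\xi_{k+1}=\varphi(\xi_k)$ for an explicit map $\varphi$ depending on $\delta$, proves $\sup_{x\in[0,1]}|\varphi'(x)|<1$ (a contraction), and sandwiches $\{\xi_k\}$ between monotone reference sequences $\{\zeta_k\}$ built with frozen $\delta$ to control $\liminf$ and $\limsup$ separately as $\delta_k\to 1$. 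Without something like this contraction argument, the fixed-point calculation is incomplete.

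A smaller but real issue: your justification that $B_k\to\infty$ is circular as written. You invoke ``$\theta_k\delta_k<1$, forced once the iterates localize and $\delta_k$ shrinks toward $1$,'' but localization of $z_k$ itself requires $d_{w_k}(z_k,x^*)\leq p_0/B_k\to 0$, i.e.\ already requires $B_k\to\infty$. The correct (and simpler) route, used in the paper, is purely algebraic: potential decrease holds for every $k$ once $y_k=y_k'$, and \Cref{increasing_A} gives $A_{k+1}\geq(1+\mu\lambda)A_k$ unconditionally, so $A_k\to\infty$ geometrically; then $a_{k+1}\geq\mu\lambda A_k\to\infty$, and $B_{k+1}>\tfrac{\mu}{2}a_{k+1}\to\infty$ follows from $a_{k+1}=2\mu^{-1}(1-\theta_k)B_{k+1}$. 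With $B_k\to\infty$ in hand, $d_{w_k}(z_k,x^*)\to 0$ and the rest of your localization argument goes through. You should replace the circular step with this.
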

\begin{proof_sketch}
The first inequality follows directly from potential decrease. Define $\xi_k = \nicefrac{a_k}{A_k}$, then it suffices to show that $\lim_{k\to +\infty} \xi_k = \sqrt{\frac{\mu\lambda}{1+\mu\lambda}}$. The proof relies on the following recursive equation:
\begin{equation}
  \label{recursive_xi_main}
  \delta_k\xi_{k+1}\bigl(\xi_{k+1}-\nicefrac{\mu\lambda}{1+\mu\lambda}\bigr) = \xi_k^2(1-\xi_{k+1}).
\end{equation}
If $\delta_k = 1$, then we can show that $\{\xi_k\}$ converges to the fixed point of \Cref{recursive_xi_main}, which is $\sqrt{\frac{\mu\lambda}{1+\mu\lambda}}$. In our setting, $\delta_k$ is not constant, but potential decrease implies that $\delta_k-1$ converges to $0$ at a linear rate. Therefore, we can still obtain the desired result.
\end{proof_sketch}

The assumption $d(w_{k+1},y_k) = \mathcal{O}(1)$ ensures that the iterates of \Cref{AHPE_Riemann} are uniformly bounded; otherwise the distortion error can become arbitrarily large. This assumption is trivially true when $w_{k+1}=y_k$, which holds for a number first-order methods that we will discuss in \Cref{sec_1st_order}. \Cref{simple_convergence_thm} also immediately implies the following result, which plays a crucial role when studying first-order methods as special cases of \Cref{AHPE_Riemann}.
\begin{corollary}
\label{asymp_acc}
Suppose that $f$ is $L$-smooth and $\lambda_k=\lambda = \Theta\left(1/L\right)$. Under the conditions in \Cref{simple_convergence_thm}, \Cref{AHPE_Riemann} eventually achieves acceleration.
\end{corollary}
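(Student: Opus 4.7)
The plan is to repackage the bound from \Cref{simple_convergence_thm} into the product form required by \Cref{def_eventual_acc}. Setting $\tau_i:=a_i/A_i$ and telescoping $A_0/A_k=\prod_{i=1}^{k}(1-\tau_i)$, the inequality $f(x_k)-f^*\le p_0/A_k$ rewrites as $f(x_k)-f^*\le (p_0/A_0)\prod_{i=1}^{k}(1-\tau_i)$, so it suffices to verify that $\{\tau_i\}$ satisfies the two properties in \Cref{def_eventual_acc}: (i) a uniform lower bound $\tau_i\ge c\,\mu/L$, and (ii) the asymptotic value $\lim_{i\to\infty}\tau_i=\Omega(\sqrt{\mu/L})$. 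The degenerate case $A_0=0$ is handled by starting the product at $i=2$ and absorbing $p_0/A_1$ into the hidden constant.

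Property (ii) is the easier of the two and follows directly from the second identity in~\eqref{convergence_ineq}. Rationalizing the reciprocal yields
\[
\frac{1}{1+\mu\lambda+\sqrt{\mu\lambda(1+\mu\lambda)}}=\frac{(1+\mu\lambda)-\sqrt{\mu\lambda(1+\mu\lambda)}}{(1+\mu\lambda)^{2}-\mu\lambda(1+\mu\lambda)}=1-\sqrt{\tfrac{\mu\lambda}{1+\mu\lambda}},
\]
so \Cref{simple_convergence_thm} gives $\lim_{k\to\infty}\tau_k=\sqrt{\mu\lambda/(1+\mu\lambda)}$, which is $\Theta(\sqrt{\mu/L})$ since $\lambda=\Theta(1/L)$.

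The main obstacle is property (i), because an asymptotic statement about $\tau_k$ does not by itself preclude small early values. For this I will use the recursion~\eqref{recursive_xi_main} that drives the proof of \Cref{simple_convergence_thm}, namely
\[
\delta_k\,\xi_{k+1}\Bigl(\xi_{k+1}-\tfrac{\mu\lambda}{1+\mu\lambda}\Bigr)=\xi_k^{2}\,(1-\xi_{k+1}),\qquad \xi_k=\tau_k.
\]
By \Cref{ahn_distortion} we have $\delta_k\ge 1$; by the construction of \Cref{AHPE_Riemann} the iterates satisfy $\xi_k\ge 0$ and $\xi_k\le 1$ (because $A_{k-1}\ge 0$), and also $\xi_{k+1}>0$. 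Hence the right-hand side is nonnegative, forcing the bracketed factor on the left to be nonnegative as well, i.e., $\xi_{k+1}\ge \mu\lambda/(1+\mu\lambda)=\Omega(\mu/L)$. An explicit evaluation of the update at $k=0$ settles the base case $\tau_1$.

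Combining (i) and (ii) with the product form of the rate gives the decay~\eqref{eq:1} with $\tau_k = a_k/A_k$. Since each iteration of \Cref{AHPE_Riemann} costs $\mathcal{O}(1)$ given the \texttt{iprox} oracle, the total computational complexity is $\mathcal{O}(k)$, and eventual acceleration in the sense of \Cref{def_eventual_acc} follows.
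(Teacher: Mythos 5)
Your proof is correct and follows the approach the paper implicitly intends when it says that \Cref{simple_convergence_thm} ``immediately implies'' the corollary: you telescope $A_0/A_k=\prod_{i}(1-\tau_i)$ with $\tau_i=a_i/A_i$, read off the limit $\lim_k\tau_k=\sqrt{\mu\lambda/(1+\mu\lambda)}=\Theta(\sqrt{\mu/L})$ from the second identity in \eqref{convergence_ineq}, and obtain the uniform lower bound $\tau_{k+1}\ge\mu\lambda/(1+\mu\lambda)=\Omega(\mu/L)$ from the sign of the two sides of the recursion \eqref{recursive_xi_main}. The sign argument for $\tau_{k+1}\ge\mu\lambda/(1+\mu\lambda)$ is a clean way to get the uniform bound required by \Cref{def_eventual_acc}, and the handling of $A_0=0$ is appropriate.
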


\noindent Finally, we bound the number of iterations sufficient for achieving full acceleration.
\begin{theorem}
\label{achieve_acc_time}
Under the assumptions in \Cref{simple_convergence_thm}, if $f$ is $L$-smooth and $\lambda_k=\lambda=\mathcal{O}(\nicefrac{1}{L})$, then we have $\xi_k \geq \frac{1}{2}\sqrt{\frac{\mu\lambda}{1+\mu\lambda}}$ after $T = \widetilde{\mathcal{O}}( \nicefrac{L}{\mu})$ iterations, where $\widetilde{\mathcal{O}}$ hides logarithmic terms. As a result, \Cref{AHPE_Riemann} achieves acceleration in at most $\widetilde{\mathcal{O}}( \nicefrac{L}{\mu})$ iterations.
\end{theorem}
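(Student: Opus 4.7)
The argument hinges on the recursion $\delta_k\xi_{k+1}(\xi_{k+1} - \beta) = \xi_k^2(1-\xi_{k+1})$ from the proof sketch of \Cref{simple_convergence_thm}, where $\beta := \mu\lambda/(1+\mu\lambda)$ and $\xi^{\star} := \sqrt{\beta}$ is the unperturbed fixed point, so the claim's target is $\xi^{\star}/2$. My plan proceeds in three stages.

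First, I rewrite the recursion as the quadratic $\delta_k y^2 + (\xi_k^2 - \delta_k\beta)y - \xi_k^2 = 0$ satisfied by $y = \xi_{k+1}$ and compute its frozen-$\delta$ fixed point $\xi^{\star\star}(\delta) = \tfrac{1}{2}\bigl((1-\delta) + \sqrt{(\delta-1)^2 + 4\delta\beta}\bigr)$. A direct check shows $\xi^{\star\star}(\delta) \geq \xi^{\star}/2$ whenever $\delta - 1 \leq c_0\sqrt{\beta}$ for a universal constant $c_0 > 0$. Sign-checking the quadratic at $y = \xi^{\star\star}(\delta_k)$ and at $y = \xi_k$ further shows the map $\xi_k \mapsto \xi_{k+1}$ is monotone and that $\xi_k \geq \xi^{\star\star}(\delta_k)$ persists at step $k+1$ under mild variation of $\delta_k$. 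Hence, starting from $\xi_1 = 1$, we get $\xi_k \geq \xi^{\star}/2$ for every $k$ at which $\delta_k - 1 \leq c_0\sqrt{\beta}$.

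Second, I bound $\delta_k - 1$ via the potential decrease. Using \Cref{ahn_distortion} and the expansion $T_K(r) = 1 + \mathcal{O}(Kr^2)$ for small $r$, it suffices to bound $d(w_k, z_k)$. The potential monotonicity $p_k \leq p_0$ from \Cref{simple_convergence_thm} gives both $A_k(f(x_k) - f^*) \leq p_0$ and $B_k d_{w_k}^2(z_k, x^*) \leq p_0$; combined with $\mu$-strong convexity of $f$, the hypothesis $d(w_{k+1}, y_k) = \mathcal{O}(1)$, and the inequality $d_w \leq d$, a triangle inequality in the tangent space at $w_k$ yields $d^2(w_k, z_k) = \mathcal{O}(1/\min(A_k, B_k))$, and therefore $\delta_k - 1 = \mathcal{O}(K/\min(A_k, B_k))$.

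Third, I break the potential circularity of step two by establishing an unconditional lower bound $\xi_k \geq \beta$ for every $k \geq 1$. Indeed, evaluating the quadratic at $y = \beta$ gives $\delta_k\beta^2+(\xi_k^2-\delta_k\beta)\beta-\xi_k^2 = \xi_k^2(\beta - 1) < 0$ for all $\xi_k > 0$ and $\delta_k \geq 0$, which forces the positive root $\xi_{k+1}$ to exceed $\beta$. This yields $A_{k+1}/A_k \geq 1/(1-\beta) \geq 1 + \Omega(\mu/L)$. A compatible growth bound for $B_k$ follows from $B_{k+1} = B_k/(\theta_k\delta_k)$: Vieta applied to the quadratic defining $\theta_k$ gives $\theta_k \leq 1/\sqrt{1+\mu\lambda}$, and the distortion bound of step two ensures $\theta_k\delta_k < 1$ after a few iterations. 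Consequently, $\min(A_k, B_k)$ grows geometrically at rate $1 + \Omega(\mu/L)$, reaches $\Omega(K/\sqrt{\beta})$ after $k = \widetilde{\mathcal{O}}(L/\mu)$ iterations, and the first two stages close the induction to give $\delta_k - 1 \leq c_0\sqrt{\beta}$, hence $\xi_k \geq \xi^{\star}/2$ from iteration $\widetilde{\mathcal{O}}(L/\mu)$ onward. The acceleration corollary is then immediate, since from that point $A_{j+1}/A_j \geq 1 + \Omega(\sqrt{\mu/L})$, matching the growth required by \Cref{def_eventual_acc}.

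The main obstacle is the coupled bootstrap between the distortion bound and $B_k$-growth: $B_k$ only grows when $\theta_k\delta_k < 1$, which requires $\delta_k$ to be near $1$, but the distortion bound itself asks for $B_k$ (or at least $A_k$) to be already large. Leveraging the unconditional $A_k$ growth to drive $\delta_k$ down first, and only then deducing geometric $B_k$ growth, is the key technical maneuver, and one must verify that the $K$-dependent constants incurred are absorbed into the logarithmic factor in $\widetilde{\mathcal{O}}(L/\mu)$.
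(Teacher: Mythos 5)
Your Stage~1 conclusion does not follow from what precedes it, and this is the load-bearing step. You correctly compute $\xi^{\star\star}(\delta)$ and verify monotonicity of $\xi_k\mapsto\xi_{k+1}$, but the assertion that ``starting from $\xi_1=1$, we get $\xi_k\ge\xi^{\star}/2$ for every $k$ at which $\delta_k-1\le c_0\sqrt{\beta}$'' is false: $\xi_k$ depends on the entire history $\{\delta_j\}_{j<k}$, and the early iterations of the burn-in are exactly when $\delta_j$ is large and $\xi_j$ is pushed down toward $\beta$. Your ``persistence'' claim fails in the relevant direction: if $\xi_k\ge\xi^{\star\star}(\delta_k)$ and $\delta_{k+1}<\delta_k$ (which is what must happen as distortion vanishes), then $\xi^{\star\star}(\delta_{k+1})>\xi^{\star\star}(\delta_k)$ while $\xi_{k+1}$ may be as small as $\xi^{\star\star}(\delta_k)$, so the invariant $\xi_k\ge\xi^{\star\star}(\delta_k)$ is not preserved. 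In other words, once distortion drops, the target level $\xi^{\star\star}(\delta_k)$ rises and $\xi_k$ must \emph{catch up}; the argument therefore needs a rate-of-approach estimate, not a persistence statement. This is precisely the ingredient the paper supplies: it shows the fixed-point map $\varphi$ satisfies $\varphi'(x)\le 1/\sqrt{2}$ uniformly, so once $\delta_k-1\le\sqrt{\beta}$ (after $\widetilde{\mathcal{O}}(L/\mu)$ iterations, established by your Stages~2--3 or by the paper's Lemma), only $\widetilde{\mathcal{O}}(1)$ additional iterations are needed for $\xi_k$ to climb from its unconditional floor $\beta$ to $\xi^{\star}/2$. Without such a contraction bound your proof does not close.

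A secondary remark: the bootstrap you identify in Stage~3 between distortion and $B_k$-growth is a red herring. The algorithm's update gives $a_{k+1}=\tfrac{2}{\mu}(1-\theta_k)B_{k+1}<\tfrac{2}{\mu}B_{k+1}$, hence $B_k>\tfrac{\mu}{2}a_k$, so $B_k$ growth is tied directly to $a_k$ growth, which in turn follows from your unconditional bound $\xi_k\ge\beta$ (equivalently $a_k\ge\mu\lambda A_{k-1}$). Thus $\min(A_k,B_k)$ grows geometrically at rate $1+\Omega(\mu\lambda)$ from the start without any appeal to $\theta_k\delta_k<1$, which both simplifies and repairs the vague ``after a few iterations'' step. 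Your unconditional lower bound $\xi_k\ge\beta$ via the sign of the quadratic at $y=\beta$ is correct and matches what the paper uses implicitly.
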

%The proofs of Corollary~\ref{asymp_acc} and Theorem~\ref{achieve_acc_time} can be found in Appendix~\ref{riem_appendix}.

\vspace*{-8pt}
\subsection{The general case of A-HPE: handling additional distortion}
\label{sec_with_addis}
In general, we do not have $y_k = y_k'$, so that an \emph{additional distortion} appears in the analysis of Riemannian A-HPE. To overcome the challenge posed by this distortion, we take an approach that is based on deriving an upper bound for the tangent space distance between $y_k$ and $y_k'$. We need the following lemma, which is a variant of ~\citep[Section B.3]{sun2019escaping}.

\begin{lemma}
\label{contraction}
~\citep[Lemma 3]{sun2019escaping} Let $x\in\mathcal{M}$ and $y, a\in T_{x}\mathcal{M}$. Let $z = \mathtt{Exp}_x(a)$, then
\begin{equation}
\notag
d\left(\mathtt{Exp}_{x}(y+a), \mathtt{Exp}_{z}\left(\Gamma_{x}^{z} y\right)\right) \leq   \min \{\|a\|,\|y\|\} S_{K}(\|a\|+\|y\|),
\end{equation}
where $S_{K}(r) =  \cosh(\sqrt{K}r)-\nicefrac{\sinh\left(\sqrt{K}r\right)}{\sqrt{K}r}$.
\end{lemma}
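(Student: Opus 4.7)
The plan is to construct a one-parameter bridge $\gamma:[0,1]\to\mathcal{M}$ with $\gamma(0)=\mathtt{Exp}_x(a+y)$ and $\gamma(1)=\mathtt{Exp}_z(\Gamma_x^z y)$, and bound the target distance by its length $\int_0^1\|\gamma'(s)\|\,ds$ via a Jacobi field analysis. A natural bridge is
\[
\gamma(s) = \mathtt{Exp}_{\mathtt{Exp}_x(sa)}\bigl(\Gamma_x^{\mathtt{Exp}_x(sa)}((1-s)a+y)\bigr).
\]
I would then realize $\gamma$ as the $t=1$ slice of the variation of geodesics $\Phi(s,t)=\mathtt{Exp}_{p_s}(tv_s)$ with $p_s=\mathtt{Exp}_x(sa)$ and $v_s=\Gamma_x^{p_s}((1-s)a+y)$, so that $\gamma'(s)=J_s(1)$, where $J_s$ is the Jacobi field along $t\mapsto\Phi(s,t)$ with initial data $J_s(0)=\Gamma_x^{p_s}(a)$ and $J_s'(0)=D_s v_s=-\Gamma_x^{p_s}(a)$; the second identity uses that parallel transport is compatible with the covariant derivative along $s\mapsto p_s$. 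The crucial cancellation $J_s(0)+J_s'(0)=0$ forces $J_s(1)=0$ on a flat manifold, localizing the entire size of $J_s(1)$ to a pure curvature correction.

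To extract the precise constant $S_K$, I would decompose $J_s$ into components parallel and perpendicular to $v_s$. The parallel part satisfies $(J_s^{\parallel})''=0$ and vanishes at $t=1$, while in the constant-curvature-$(-K)$ model the perpendicular part solves $J_\perp''-K\|v_s\|^2 J_\perp=0$ with data $(J_s(0)_\perp,-J_s(0)_\perp)$, yielding $J_\perp(1)=J_s(0)_\perp\cdot S_K(\|v_s\|)$. A Rauch-type comparison transfers this upper bound to $\mathcal{M}$: $\|J_s(1)\|\le\|J_s(0)_\perp\|\cdot S_K(\|v_s\|)$. The identity $\|J_s(0)_\perp\|=\|a\|\|y\|\sin\angle(a,y)/\|v_s\|$, which follows from a direct computation (parallel transport preserves angles, and $\sin\angle(a,(1-s)a+y)=\|y\|\sin\angle(a,y)/\|v_s\|$), combined with monotonicity of $r\mapsto S_K(r)/r$ and $\|v_s\|\le\|a\|+\|y\|$, then yields
\[
\int_0^1\|J_s(1)\|\,ds\le \|a\|\|y\|\sin\angle(a,y)\cdot\frac{S_K(\|a\|+\|y\|)}{\|a\|+\|y\|}\le \min\{\|a\|,\|y\|\}\cdot S_K(\|a\|+\|y\|),
\]
where the last step uses $\|a\|\|y\|/(\|a\|+\|y\|)\le\min\{\|a\|,\|y\|\}$. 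Pleasantly, both halves of the $\min$ emerge from this single integration rather than requiring two separate bridge constructions.

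The main obstacle is justifying the Rauch-type transfer from the constant-curvature model when $J_s(0)\neq 0$: standard Rauch comparison is stated for Jacobi fields vanishing at $t=0$, and one needs the appropriate ``cosh-version'' or an index-form argument to recover the precise constant $S_K$ rather than a looser surrogate (such as $\sinh(\sqrt{K}r)/(\sqrt{K}r)-1$). A short supplementary check of the monotonicity of $r\mapsto S_K(r)/r$ on $(0,\infty)$ is also needed to justify pulling $S_K(\|v_s\|)/\|v_s\|$ outside the integral.
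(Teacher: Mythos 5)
Your argument matches the paper's essentially step for step: the same bridge curve realized as a one-parameter family of geodesics, the same Jacobi field with the key cancellation $J_s(0)+J_s'(0)=0$, the same constant-curvature model producing $S_K$, and the same monotonicity of $r\mapsto S_K(r)/r$ --- the only difference being that the paper cites~\citep[Section B.3]{sun2019escaping} for the estimate $\|J_s(1)\|\le\|J_s(0)_\perp\|\,S_K(\|v_s\|)$ while you derive it yourself. The Rauch-type transfer you flag as a potential gap can be closed without a special ``cosh-Rauch'' theorem: integrating the Jacobi equation twice gives $J_s(t)=(1-t)J_s(0)-\int_0^t(t-\tau)R(J_s(\tau),v_s)v_s\,d\tau$, and since $\mathrm{sec}\in[-K,0]$ the self-adjoint Jacobi operator on $v_s^{\perp}$ has operator norm at most $K\|v_s\|^2$, so a Volterra--Gr\"onwall comparison against $h''=K\|v_s\|^2 h$ with $h(0)=\|J_s(0)_\perp\|$, $h'(0)=-h(0)$ gives $\|J_s(t)_\perp\|\le\|J_s(0)_\perp\|\,S_K(\|v_s\|t)$ on $t\in[0,1]$, recovering the sharp constant.
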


Note that a key feature of the function $S_{K}$ is that $\lim_{r\to 0}S_{K}(r)=0$. By using \Cref{contraction}, we can obtain an upper bound on $d_{w_{k+1}}(y_k,y_k')$ in terms of $S_K(\cdot)$ and a distance term.

\begin{lemma}
\label{addis_bound}
We have for all $k \geq 1$ that 
\begin{equation}
    \label{addis_bound_eq}
    \begin{aligned}
    d_{w_{k+1}}(y_k,y_k') \leq 2 d^{*}(w_{k+1};x_k,z_k)\cdot S_{K}\left( d(x_k,z_k)+d^*(w_{k+1};x_k,z_k) \right),
    \end{aligned}
\end{equation}
where $d^*(w;x,z) := \min\left\{ d(w,y) \mid y = \mathtt{Exp}_{x}(t\cdot\mathtt{Exp}_{x}^{-1}(z)), t \in [0,1] \right\}$ is the distance from $w$ to the geodesic connecting $x$ and $z$.
\end{lemma}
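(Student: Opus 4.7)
The plan is to introduce as an auxiliary point the projection $w^{\ast}$ of $w_{k+1}$ onto the geodesic $\gamma$ joining $x_k$ and $z_k$, so that $d(w_{k+1}, w^{\ast}) = d^{\ast}(w_{k+1}; x_k, z_k)$ by definition of $d^{\ast}$. The key observation is that since $w^{\ast}$ lies on $\gamma$, the three tangent vectors $\mathtt{Exp}_{w^{\ast}}^{-1}(x_k)$, $\mathtt{Exp}_{w^{\ast}}^{-1}(y_k)$ and $\mathtt{Exp}_{w^{\ast}}^{-1}(z_k)$ are collinear in $T_{w^{\ast}}\mathcal{M}$; in particular, for $t := \theta_k a_{k+1}/(A_k + \theta_k a_{k+1})$ one has $\mathtt{Exp}_{w^{\ast}}^{-1}(y_k) = (1-t)\mathtt{Exp}_{w^{\ast}}^{-1}(x_k) + t\,\mathtt{Exp}_{w^{\ast}}^{-1}(z_k)$. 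Hence the discrepancy $y_k \neq y_k'$ arises solely from moving the base point of the convex-combination construction from $w^{\ast}$ to $w_{k+1}$, a displacement of only $d^{\ast}$.

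To convert this into a quantitative bound, I would introduce the intermediate manifold point $\bar{y}_k := \mathtt{Exp}_{w_{k+1}}(y + a)$ with $a := \mathtt{Exp}_{w_{k+1}}^{-1}(w^{\ast})$ and $y := \Gamma_{w^{\ast}}^{w_{k+1}}(\mathtt{Exp}_{w^{\ast}}^{-1}(y_k))$, and invoke Lemma~\ref{contraction}. Since $\mathtt{Exp}_{w^{\ast}}(\Gamma_{w_{k+1}}^{w^{\ast}} y) = y_k$ by construction, the lemma yields $d(y_k, \bar{y}_k) \leq d^{\ast} S_K(d^{\ast} + d(w^{\ast}, y_k)) \leq d^{\ast} S_K(d^{\ast} + d(x_k, z_k))$ because $w^{\ast}, y_k \in \gamma$, and the inequality $d_{w_{k+1}} \leq d$ noted in the preliminaries upgrades this to a bound on $d_{w_{k+1}}(y_k, \bar{y}_k)$. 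Separately, the collinearity observation gives $y + a = (1-t)\check{x} + t\check{z}$ with $\check{x} := a + \Gamma_{w^{\ast}}^{w_{k+1}}\mathtt{Exp}_{w^{\ast}}^{-1}(x_k)$ and $\check{z}$ analogously; two further applications of Lemma~\ref{contraction} (with $y_k$ replaced by $x_k$ and $z_k$, respectively) and another appeal to $d_{w_{k+1}} \leq d$ give $\|\mathtt{Exp}_{w_{k+1}}^{-1}(x_k) - \check{x}\|$ and $\|\mathtt{Exp}_{w_{k+1}}^{-1}(z_k) - \check{z}\|$ both at most $d^{\ast} S_K(d^{\ast} + d(x_k, z_k))$, so convexity of the combination yields $d_{w_{k+1}}(\bar{y}_k, y_k') \leq d^{\ast} S_K(d^{\ast} + d(x_k, z_k))$. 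A triangle inequality in $T_{w_{k+1}}\mathcal{M}$ then produces the factor of $2$ in~\eqref{addis_bound_eq}.

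The main technical obstacle, as I see it, is keeping the argument of $S_K$ at the tight value $d(x_k, z_k) + d^{\ast}$ uniformly across all four uses of Lemma~\ref{contraction}, rather than letting it blow up to something strictly larger such as $d(w_{k+1}, y_k) + d^{\ast}$. The choice of $w^{\ast}$ as the \emph{projection} of $w_{k+1}$ onto $\gamma$ is precisely what guarantees that $d(w^{\ast}, x_k)$, $d(w^{\ast}, y_k)$ and $d(w^{\ast}, z_k)$ are all bounded by $d(x_k, z_k)$ simultaneously, so that the three curvature arguments coincide at $d^{\ast} + d(x_k, z_k)$ and the remaining bookkeeping is routine.
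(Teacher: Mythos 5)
Your proof is correct and follows essentially the same decomposition as the paper's: introduce the closest point $w^*$ on the geodesic connecting $x_k$ and $z_k$, introduce the intermediate point $\bar{y}_k$ obtained by parallel-transporting the construction of $y_k$ from $T_{w^*}\mathcal{M}$ to $T_{w_{k+1}}\mathcal{M}$, split $d_{w_{k+1}}(y_k,y_k')$ by the triangle inequality into $d_{w_{k+1}}(y_k,\bar y_k)+d_{w_{k+1}}(\bar y_k,y_k')$, and control each piece by \Cref{contraction}. The paper's proof (which calls the auxiliary point $y_k''$ and the base point $w$) is the same argument. There is one genuine, if minor, difference: you apply \Cref{contraction} with base point $x=w_{k+1}$ and $z=w^*$, so the vector $y$ in the lemma has norm $d(w^*,\,\cdot\,)\le d(x_k,z_k)$ because $w^*$ and the target point both lie on the geodesic; the paper instead takes $x=w$ and $z=w_{k+1}$, so $\|y\|$ becomes a tangent-space distance bounded only via a triangle inequality by $d(x_k,z_k)+d(w,w_{k+1})$, producing the argument $d(x_k,z_k)+2d^*$ inside $S_K$. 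Your orientation is the sharper one and in fact reproduces the bound $S_K(d(x_k,z_k)+d^*)$ as stated in the lemma exactly, whereas the paper's appendix derivation only establishes the slightly weaker $S_K(d(x_k,z_k)+2d^*)$ (the version that is actually invoked later in \Cref{convergence_general}); this appears to be a small internal inconsistency in the paper that your argument cleanly avoids, and you correctly identified this as the crux in your closing remark.
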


When $d_{w_{k+1}}^2(y_k,y_k')$ is small, we can imagine that the algorithm still behaves similar to the $y_k = y_k'$ case studied in \Cref{sec_without_distortion}. From a technical standpoint, to lower-bound the potential difference $p_k - p_{k+1}$, the key difference between the Riemannian setting with the Euclidean setting is the presence of an additional negative term that depends on $d_{w_{k+1}}^2(y_k,y_k')$. As a result, potential decrease can still be guaranteed if the RHS of \Cref{addis_bound_eq} is smaller than the positive terms. This yields our main result for the convergence of \Cref{AHPE_Riemann} in the general case, as stated below.

\begin{theorem}[informal]
\label{informal_convergence_general}
Suppose that $f$ is $L$-smooth, $\sigma_k =\sigma \in (0,1)$ and $\lambda_k=\lambda = \mathcal{O}(\nicefrac{1}{L})$. Under regularity conditions on the choice of $w_{k+1}$, if the initialization satisfies $d(x_0,x^*) =\mathcal{O}\bigl(K^{-\nicefrac{1}{2}}(\nicefrac{\mu}{L})^{\nicefrac{3}{4}}\bigr)$ and $B_0 = \frac{\mu}{2}A_0 > 0$, then potential decrease holds, and $\xi_k := \frac{a_k}{A_k} = \Theta\bigl(\sqrt{\frac{\mu\lambda}{1+\mu\lambda}}\bigr)$. 
\end{theorem}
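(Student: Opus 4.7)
The plan is to reproduce the Euclidean argument behind \Cref{euclid_potential_decrease} while treating the mismatch $y_k \neq y_k'$ in \Cref{update_distortion} as a perturbation, and then close an induction that keeps every iterate inside a ball whose radius makes that perturbation negligible. Concretely, starting from~\eqref{update_distortion_eq}, I would combine the two positive squared errors by the Cauchy--Schwarz trick sketched at the end of Section~\ref{overview}: denote by $E_1$ and $E_2$ the vectors inside the first two squares in~\eqref{update_distortion_eq}, and observe that $E_1 - E_2 = -\bigl(\mathtt{Exp}_{w_{k+1}}^{-1}(y_k) - \mathtt{Exp}_{w_{k+1}}^{-1}(y_k')\bigr) + (\mu^{-1}+\lambda_k) v_{k+1}$. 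An inequality of the form $\tfrac{1}{2}(\mu^{-1}+\lambda_k)^2\|v_{k+1}\|^2 \leq \|E_1 - E_2\|^2 + d_{w_{k+1}}^2(y_k, y_k')$ then reproduces the Euclidean cancellation of $-\tfrac{A_{k+1}}{2\mu}\|v_{k+1}\|^2$ against the $E_1, E_2$ terms, at the price of an extra loss proportional to $d_{w_{k+1}}^2(y_k, y_k')$.

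The second step is to control that gap via \Cref{addis_bound} and the expansion $S_K(r) = \tfrac{1}{3}Kr^2 + O((Kr^2)^2)$ near $r = 0$. With $r := \max\{d(x_k,z_k),\, d^*(w_{k+1}; x_k,z_k)\}$, the lemma gives $d_{w_{k+1}}(y_k,y_k') = O(Kr^3)$, so the additional distortion loss in $p_k - p_{k+1}$ scales as $A_{k+1}K^2 r^6/\lambda$. Comparing this with the ``protecting'' positive term $\tfrac{\mu\theta_k a_{k+1}A_k}{2(A_k+\theta_k a_{k+1})}d_{w_{k+1}}^2(x_k,z_k)$ in~\eqref{update_distortion_eq}, which is of order $A_{k+1}\mu\sqrt{\mu\lambda}\, r^2$ under the induction $\xi_k \asymp \sqrt{\mu\lambda}$ with $\lambda \asymp 1/L$, shows that the loss is absorbed as soon as $r = \mathcal{O}\bigl(K^{-1/2}(\mu/L)^{3/4}\bigr)$, which is exactly the initialization radius in the theorem.

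The third step closes the bootstrap by induction on $k$, carrying two invariants: (i) $p_j \leq p_0$ for all $j \leq k$, and (ii) $\xi_j$ lies in a fixed window around $\sqrt{\mu\lambda/(1+\mu\lambda)}$. Invariant (i), geodesic strong convexity, and the ratio $B_j = \Theta(\mu A_j)$---preserved by $B_0 = \mu A_0/2$ and the update $B_{k+1} = B_k/(\theta_k\delta_k)$---give $d(x_j,x^*),\, d(z_j,x^*) \lesssim d(x_0,x^*)$; the triangle inequality then controls $d(x_j,z_j)$, and the regularity assumption on $w_{k+1}$ bounds $d^*(w_{k+1};x_k,z_k)$ at the same scale. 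The previous two steps then yield $p_{k+1} \leq p_k$. For (ii), the recursion~\eqref{recursive_xi_main} is perturbed only by $\delta_k - 1 = O(Kr^2) = O((\mu/L)^{3/2})$, which is small compared to the local slope of the fixed-point map at $\sqrt{\mu\lambda/(1+\mu\lambda)}$, so a standard stability argument---of the same flavor as in the proof sketch of \Cref{simple_convergence_thm}---traps $\xi_k$ in the window.

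The main obstacle will be the tight coupling in the inductive step: potential decrease is needed to guarantee the radius bound on $r$, but the radius bound is itself needed to guarantee potential decrease. Making the induction propagate with fixed constants requires pinning down the absolute multipliers so that $K^2 r^6/\lambda$ is a strict fraction---not merely at most---of $\mu\sqrt{\mu\lambda}\,r^2$ at every step, and simultaneously verifying that the perturbed recursion for $\xi_k$ does not drift out of its window over infinitely many iterations, which is delicate because $\delta_k$ only returns to $1$ through potential decrease itself. A secondary technical point is that the informal ``regularity conditions on $w_{k+1}$'' in the statement must be formalized as a quantitative bound on $d^*(w_{k+1};x_k,z_k)$ compatible with the same $K^{-1/2}(\mu/L)^{3/4}$ scaling; a natural choice, consistent with the basic case of \Cref{sec_without_distortion}, is to require $w_{k+1}$ to lie within constant tangent-space distance of the geodesic from $x_k$ to $z_k$.
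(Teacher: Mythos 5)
Your high-level strategy is the same as the paper's: treat the mismatch $y_k \neq y_k'$ in \Cref{update_distortion} as a perturbation handled by a Cauchy--Schwarz argument, control that perturbation via \Cref{addis_bound} and the expansion $S_K(r) \approx \tfrac{1}{3}Kr^2$, and close a bootstrap induction that keeps iterates in a ball small enough to absorb the distortion while running a perturbed version of the $\xi_k$-recursion from \Cref{simple_convergence_thm}. You also correctly single out the main technical headache---the circular dependence between potential decrease (needed to bound the radius) and the radius bound (needed for potential decrease)---which the paper resolves by carrying five explicit inductive invariants in \Cref{convergence_general}.

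Where you diverge, and where I think there is a real gap, is in the two lower-level choices you make to implement the Cauchy--Schwarz step and the comparison. First, you budget the distortion loss against the $\tfrac{\mu\theta_k a_{k+1}A_k}{2(A_k+\theta_k a_{k+1})}\,d_{w_{k+1}}^2(x_k,z_k)$ term of \eqref{update_distortion_eq}. The paper instead budgets against the first (gradient-step) term $\tfrac{A_{k+1}}{2\lambda_k\sigma_k}\bigl\| \mathtt{Exp}_{w_{k+1}}^{-1}(y_k)-\mathtt{Exp}_{w_{k+1}}^{-1}(x_{k+1})-\lambda_k v_{k+1} \bigr\|^2$ and the cross term $d_{w_{k+1}}(x_{k+1},y_k)\|v_{k+1}\|$ produced by \Cref{rescale2}; the $d^2(x_k,z_k)$ term is spare and never used in \eqref{suff_cond_potential_dec}. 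This is not a cosmetic choice: with your comparison, the loss $\sim K^2 r^6$ versus protection $\sim \mu\sqrt{\mu\lambda}\,r^2$ forces $r \lesssim K^{-1/2}(\mu/L)^{3/8}$, while after expanding the initial radius by the $(L/\mu)^{1/2}$ coming from $p_k\le p_0$ and strong convexity one only has $r\lesssim K^{-1/2}(\mu/L)^{1/4}$. Since $(\mu/L)^{1/4}>(\mu/L)^{3/8}$, your bound does not close at the initialization scale $K^{-1/2}(\mu/L)^{3/4}$ claimed by the theorem; you would need to shrink the starting ball further. Second, and related, the regularity condition you propose---that $w_{k+1}$ lie at constant tangent-space distance from the $x_k\!-\!z_k$ geodesic---is too weak to support the comparison the paper actually makes: the paper needs $d^*(w_{k+1};x_k,z_k)\le\rho_1\, d_{w_{k+1}}(x_{k+1},y_k)$ (together with $d^*\le\rho_2\max\{d(x_k,x^*),d(z_k,x^*)\}$), a proportionality that shrinks as the step shrinks, precisely so that the distortion loss can be charged to the $d_{w_{k+1}}^2(x_{k+1},y_k)$ term. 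A fixed-distance condition on $w_{k+1}$ does not give this, because $d_{w_{k+1}}(x_{k+1},y_k)\to 0$ while your bound on $d^*$ stays $\Theta(1)$. These two changes---switching the protecting term to the gradient-step size and assuming the $\rho_1$-proportionality---are the missing ideas that make the exponent in the initialization radius come out right.
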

\begin{proof_sketch}
  The proof is by induction on $k$. When $k=0$, by using regularity conditions on $w_1$, we can derive an upper bound for the RHS of \Cref{addis_bound_eq}, which implies potential decrease.
Now suppose that potential decrease holds for $k$. By the definition of the potential function $p_k$, we can show that $d^2(x_k,x^*)$ and $d^2(z_k,x^*) = \mathcal{O}(K^{-1}(\nicefrac{\mu}{L})^{\nicefrac{1}{2}})$.

Note that the distortion rate $\delta_k \leq 1 + \mathcal{O}\left( K d^2(w_k,z_k)\right)$, and under regularity conditions on $w_k$ we can bound $\delta_k$ by $1+\mathcal{O}( \sqrt{\nicefrac{\mu}{L}})$; $\xi_{k+1}$ can then be lower-bounded using the recursive equation \Cref{recursive_xi} and the lower bound for $\xi_k$. Finally, the RHS of \eqref{addis_bound_eq} can be directly upper-bounded using the bounds for $x_k$, $z_k$ and regularity conditions on $w_{k+1}$, which implies potential decrease for $k+1$.
\end{proof_sketch}

\vskip6pt
The regularity conditions on the sequence $\{w_k\}$ are described formally in  \Cref{convergence_general}, and they play a crucial role in \Cref{informal_convergence_general}. In short, they require that $\{w_k\}$ is not too far away from the sequence $\{x_k\}$ and $\{z_k\}$, since otherwise the algorithm may suffer from large distortion error.

\Cref{informal_convergence_general} implies that as long as the initialization is inside a $\mathcal{O}( K^{-\nicefrac{1}{2}}(\nicefrac{\mu}{L})^{\nicefrac{3}{4}})$ neighbourhood of the global minimum $x^*$, then it can achieve the accelerated rate. 

\begin{corollary}
\label{cor_acc}
  Under the assumptions of \Cref{informal_convergence_general}, we have
  \begin{equation}
      \notag
      f(x_k)-f(x^*) \leq c_1 K^{-1}L(\nicefrac{\mu}{L})^{\frac{3}{2}} \cdot\bigl( 1- c_2\sqrt{\nicefrac{\mu}{L}}\bigr)^{k},
  \end{equation}
  for some numerical constants $c_1, c_2 > 0$.
\end{corollary}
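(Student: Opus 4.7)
My plan is to derive the bound directly from the two conclusions of \Cref{informal_convergence_general}: (i) potential decrease, $p_k \le p_0$ for all $k$; and (ii) the uniform lower bound $\xi_k = a_k/A_k = \Theta(\sqrt{\mu\lambda/(1+\mu\lambda)})$. Since $\lambda = \mathcal{O}(1/L)$ and $\mu \le L$, item (ii) gives $\xi_k \ge c_2\sqrt{\mu/L}$ for some absolute constant $c_2 > 0$, and from the identity $A_k = A_{k+1}(1-\xi_{k+1})$ implicit in Line~5 of \Cref{AHPE_Riemann} we obtain the telescoped estimate
\begin{equation*}
    \frac{A_0}{A_k} \;=\; \prod_{i=1}^{k}(1-\xi_i) \;\le\; \bigl(1-c_2\sqrt{\mu/L}\bigr)^{k}.
\end{equation*}

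Next, I would bound the initial potential $p_0 = A_0(f(x_0)-f(x^*)) + B_0\, d_{w_0}^2(z_0,x^*)$. Using $z_0 = x_0$ (from Line~1 of \Cref{AHPE_Riemann}), the assumption $B_0 = \tfrac{\mu}{2}A_0$, and the general inequality $d_w(x,y)\le d(x,y)$ to avoid referring to the unspecified $w_0$, this reduces to $p_0 \le A_0(f(x_0)-f(x^*))+\tfrac{\mu A_0}{2}d^2(x_0,x^*)$. Invoking $L$-smoothness of $f$ gives $f(x_0)-f(x^*)\le \tfrac{L}{2}d^2(x_0,x^*)$, so $p_0/A_0 \le L\, d^2(x_0,x^*)$. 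The initialization assumption $d(x_0,x^*) = \mathcal{O}(K^{-1/2}(\mu/L)^{3/4})$ then yields $p_0/A_0 = \mathcal{O}(K^{-1}L(\mu/L)^{3/2})$.

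Combining the two pieces, potential decrease implies $A_k(f(x_k)-f(x^*)) \le p_k \le p_0$, and hence
\begin{equation*}
    f(x_k)-f(x^*) \;\le\; \frac{p_0}{A_k} \;=\; \frac{p_0}{A_0}\cdot\frac{A_0}{A_k} \;\le\; c_1\, K^{-1}L(\mu/L)^{3/2}\cdot\bigl(1-c_2\sqrt{\mu/L}\bigr)^k,
\end{equation*}
which is precisely the claim.

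I do not anticipate a genuine obstacle here, since the substantive work is already contained in \Cref{informal_convergence_general}. The only care required is in tracking the constants hidden behind $\Theta$ and $\mathcal{O}$, in particular making sure the translation from $\xi_k = \Theta(\sqrt{\mu\lambda/(1+\mu\lambda)})$ with $\lambda = \mathcal{O}(1/L)$ to $\xi_k \ge c_2\sqrt{\mu/L}$ is uniform in the regime $\mu \le L$, and that the resulting $c_1, c_2$ do not depend on $k$. This is routine book-keeping.
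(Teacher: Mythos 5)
Your proposal is correct and follows essentially the same route as the paper: potential decrease gives $f(x_k)-f(x^*)\le p_0/A_k$, the lower bound on $\xi_k$ from \Cref{informal_convergence_general} telescopes via $A_{k-1}=A_k(1-\xi_k)$ into the geometric factor, and $L$-smoothness plus the initialization radius bounds $p_0/A_0$ (the paper's \Cref{cor_acc_appendix} records exactly these constants, with $c_2 = \Theta(c)$ coming from $\sqrt{\mu\lambda/(1+\mu\lambda)}\ge \tfrac{c}{\sqrt{2}}\sqrt{\mu/L}$). No gaps.
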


\section{Special cases of Riemannian A-HPE: acceleration of several first-order methods}
\label{sec_1st_order}
Inspired by Nesterov's method, a number of different accelerated methods have been proposed in the Euclidean setting \citep{diakonikolas2018accelerated,chen2019first,huang2021unifying}. These methods are empirically observed to be superior in some aspects (e.g., robustness to noise, possibly smaller constants in convergence bounds, etc.) However, they are derived using a variety of very different techniques, which obscures their common origin. In contrast, we observe that all of them can be deduced from A-HPE quite naturally and straightforwardly.

At the same time, in the Riemannian setting only a generalized version of Nesterov's method is known to achieve acceleration~\citep{zhang2018estimate,ahn2020nesterov}. Can we design other accelerated methods, similar to those in the Euclidean setting? The answer is ``yes,'' and we discuss below several special cases obtained from our Riemannian A-HPE framework. We divide these special cases into two categories: (i) those without additional distortion (\Cref{sec_without_distortion}), and which eventually attain acceleration with arbitrary initialization due to \Cref{simple_convergence_thm}; and (ii) those that can suffer additional distortion studied in~\Cref{sec_with_addis}, for which local acceleration is ensured by \Cref{informal_convergence_general}. Detailed derivations of the methods studied in this section are given in \Cref{1st_order_appendix}.

\subsection{Accelerated methods without additional distortion}
\textbf{Riemannian Nesterov's method. } Nesterov's method has a direct generalization to the Riemannian setting, as proposed and analyzed in \citep{zhang2018estimate,ahn2020nesterov}; it takes the form:
\begin{equation}
\label{Riemann_nesterov}
    \begin{aligned}
    y_{k} &= \mathtt{Exp}_{x_k}\bigl( \tfrac{\theta_k a_{k+1}}{A_k+\theta a_{k+1}}\mathtt{Exp}_{x_k}^{-1}(z_k) \bigr), \\
    x_{k+1} &= \mathtt{Exp}_{y_k}(-\lambda\nabla f(y_k)), \\
    z_{k+1} &= \mathtt{Exp}_{y_k}( \theta_k\mathtt{Exp}_{y_k}^{-1}(z_k) - \mu^{-1}(1-\theta_k) \nabla f(y_k)).
    \end{aligned}
\end{equation}
\noindent We can derive this algorithm from \Cref{AHPE_Riemann} by choosing $w_{k+1} = y_k$, $x_{k+1} = \mathtt{Exp}_{y_k}(-\lambda_k\nabla f(y_k))$ and $v_{k+1}=\nabla f(y_k)+ \mu\mathtt{Exp}_{y_k}^{-1}(x_{k+1})$. Additional distortion is not present since $w_{k+1}=y_k$. We also recover the result of \citet{ahn2020nesterov} that~\eqref{Riemann_nesterov} can eventually achieve acceleration; the local acceleration result of \citet{zhang2018estimate} can also be directly deduced from \Cref{informal_convergence_general}.

\vskip6pt
\noindent\textbf{Riemannian Nesterov's method with multiple gradient steps.} We can also perform multiple gradient descent (GD) steps from $y_k$ to obtain $x_{k+1}$. \citet[Algorithm 3]{chen2019first} present a method of this type in the Euclidean setting. Here we consider a Riemannian version of their method:
\begin{equation}
    \label{Riemann_adGD}
    \begin{aligned}
    y_{k} &= \mathtt{Exp}_{x_k}\bigl( \tfrac{\theta_k a_{k+1}}{A_k+\theta a_{k+1}}\mathtt{Exp}_{x_k}^{-1}(z_k) \bigr), \\
    \tilde{x}_{k+1} &= \mathtt{Exp}_{y_k}(-\lambda\nabla f(y_k)),\\
    x_{k+1} &= \mathtt{Exp}_{\tilde{x}_{k+1}}(-\lambda\nabla f(\tilde{x}_{k+1})), \\
    z_{k+1} &= \mathtt{Exp}_{y_k}\bigl( \theta_k\mathtt{Exp}_{y_k}^{-1}(z_k) - \mu^{-1}(1-\theta_k) \nabla f(y_k) \bigr).
    \end{aligned}
\end{equation}
Method~\eqref{Riemann_adGD} can be derived from \Cref{AHPE_Riemann} by choosing $x_{k+1}$ as the result of two GD steps; the other variables the same as Riemannian Nesterov's method.

\subsection{Accelerated methods with additional distortion}
\textbf{Riemannian accelerated extra-gradient descent (RAXGD).} We consider a Riemannian version of the accelerated extra-gradient method (AXGD) proposed by \citet{diakonikolas2018accelerated}: 
\begin{equation}
    \label{RAXGD}
    \begin{aligned}
    y_{k} &= \mathtt{Exp}_{x_k}\bigl( \tfrac{\theta_k a_{k+1}}{A_k+\theta a_{k+1}}\mathtt{Exp}_{x_k}^{-1}(z_k) \bigr), \\
    x_{k+1} &= \mathtt{Exp}_{y_k}(-\lambda\nabla f(y_k)),\\
    z_{k+1} &= \mathtt{Exp}_{x_{k+1}}\bigl( \theta_k\mathtt{Exp}_{x_{k+1}}^{-1}(z_k) - \mu^{-1}(1-\theta_k) \nabla f(x_{k+1})\bigr).
    \end{aligned}
\end{equation}
Method~\eqref{RAXGD} can be recovered from \Cref{AHPE_Riemann} by choosing $v = \nabla f(x_{k+1})$, and $w_{k+1} = x_{k+1} = \mathtt{Exp}_{y_k}(-\lambda_k\nabla f(y_k))$. While \citet{diakonikolas2018accelerated} obtain AXGD via a specifically chosen discretization of  suitable continuous-time dynamics, we observe that (R)AXGD can be deduced from A-HPE quite straightforwardly.

\vskip6pt
\noindent\textbf{Generalized RAXGD.} We can deduce from \Cref{AHPE_Riemann} a generalized
version of RAXGD. Specifically, we replace the gradient descent step of $x_{k+1}$ in \Cref{RAXGD} with the following:
\begin{equation}
  \label{eq:3}
    w_{k+1} = \mathtt{Exp}_{y_k}(-\lambda_k\nabla f(y_k)),\quad d(x_{k+1},w_{k+1}) \leq c \cdot d(x_{k+1},y_k).
\end{equation}
Here $c \in (0,1)$ is a numerical constant. Please refer to \Cref{1st_order_appendix} for more details and discussion of the choices proposed in~\eqref{eq:3}.

\vskip6pt
\noindent\textbf{The extra-point framework of \citet{huang2021unifying}.} Recently, a general framework was proposed by \citet{huang2021unifying} for obtaining accelerated methods in the Euclidean setting (\emph{cf}.\ eq.,~(26) therein). We observe that their framework has a natural interpretation via the PPM viewpoint discussed in \Cref{section_euclidean}, though upon using a less general version of update rules compared with A-HPE. A detailed comparison between their framework and A-HPE is provided in \Cref{comparison}, where we also present a Riemannian generalization of their algorithm. Using our approach of analyzing Riemannian A-HPE, local acceleration can be shown for the resulting algorithm, while for a special case (corresponding to the algorithm described in~\citep[eq.(38)]{huang2021unifying}), global eventual acceleration can also be achieved.

\section{Conclusion and future directions}
In this paper, we propose an alternative viewpoint of the Euclidean A-HPE framework of~\citep{monteiro2013accelerated} via the proximal point method. This viewpoint allows us to derive a simple and novel convergence analysis of A-HPE; it also plays a pivotal role in obtaining \Cref{AHPE_Riemann}, our proposed  generalization of A-HPE to the Riemannian setting. While most of our Euclidean proof generalizes to the Riemannian setting, there is an additional distortion caused by the non-linearity of the exponential map that we must overcome; we model this distortion by leveraging geometric tools to complete the convergence analysis. Our main results include local acceleration of Riemannian A-HPE in its most general form, which we sharpen to global (eventual) acceleration whenever additional distortion is not present. We demonstrate the generality of our framework by discussing several accelerated first-order methods as special cases, recovering the recent results~\citep{zhang2018estimate,ahn2020nesterov} as special cases, obtaining Riemannian counterparts of other accelerated (Euclidean) algorithms, and deriving new algorithms from our framework. 

An aspect more basic worth noting is that this work also contributes toward a more thorough understanding of accelerated methods on Riemannian manifolds. Even on Euclidean spaces, our PPM-based approach may be of independent interest, since it provides a unified way for analyzing several accelerated methods that have been proposed in the literature and analyzed using a number of different techniques. Nonetheless, there are some important questions that remain unanswered.

First, we only show local convergence in the general case where additional distortion arises. It is unclear whether Riemannian A-HPE can indeed fail to converge in some cases, or whether the locality restriction is a shortcoming of our analysis. Nevertheless, we believe that some regularization conditions on the specification of the \emph{iprox} operator (e.g., the conditions in \Cref{convergence_general}) are necessary, since large distortion error would unavoidably impact the rate of convergence.

Second, in this paper we focus on accelerated first-order methods for strongly-convex functions on non-positively curved manifolds. The main challenge of the convex setting is that the effect of metric distortion would not asymptotically vanish as in the strong-convex setting. For manifolds with positive curvature, it is necessary to restrict the iterates inside a convex set, for example by using projection operators, but this may hurt the analysis of acceleration. Also, as discussed in \Cref{intro}, the A-HPE framework can also lead to optimal higher-order methods in Euclidean setting. However, to the best of our knowledge, optimal higher-order methods and their convergence rates are not known in the Riemannian setting. It may be useful (and feasible) to design such methods based on the Riemannian A-HPE framework introduced in this paper. 

Finally, a broader goal in the study of acceleration is to develop theory and algorithms for non-Euclidean settings beyond those offered by Riemannian geometry.      

\setlength{\bibsep}{3pt}
\bibliographystyle{plainnat}
\bibliography{references}

\newpage
\appendix
\section{Connection between \textit{iprox} and $\varepsilon$-subgradient}
\label{eps_subgrad_appendix}

In this section, we show the equivalence between the \textit{iprox} operator (cf. \Cref{iprox}) and the notion of $\varepsilon$-subdifferential ~\citep[Section 3]{brondsted1965subdifferentiability}.

\begin{definition}
Suppose that $h:\mathbb{R}^d \to\mathbb{R}$ is $\mu$-strongly convex and $x\in\mathbb{R}^d$. We say that $u \in\mathbb{R}^d$ is an $\varepsilon$-subgradient of $f$ at $x$ if the inequality
\begin{equation}
    \notag
    f(y) \geq f(x)+\left\langle u,y-x\right\rangle +\frac{\mu}{2}\|y-x\|^2-\varepsilon
\end{equation}
holds for all $y\in\mathbb{R}^d$.
\end{definition}

Note that the condition $v-\mu x+\mu w \in\partial f(w)$ in \Cref{iprox} implies that $0 \in \partial\Phi(w)$, where
\begin{equation}
    \notag
    \Phi(z) = f(x)-f(z)-\left\langle x-z,v \right\rangle+\frac{\mu}{2}\|x-z\|^2
\end{equation}
Moreover, $\Phi(z)$ is concave since $f$ is $\mu$-strongly convex. Hence $w \in\mathop{\arg\max}_{z}\Phi(z)$, and for any $z$ we have
\begin{equation}
    \notag
    f(z) \geq f(x)+\left\langle z-x,v\right\rangle+\frac{\mu}{2}\|x-z\|^2 -\frac{1+\lambda\mu}{\lambda}\varepsilon.
\end{equation}
In other words, $v$ is an $\frac{1+\lambda\mu}{\lambda}\varepsilon$-subgradient of $f$ at $x$. The inequality \Cref{iprox_ineq} further implies that $x+\lambda v \approx y$. Thus \Cref{iprox} indeed defines an approximation to the exact proximal point, for which $x+\lambda v = y$ and $v \in\partial f(x)$.

\section{Details and proofs of \Cref{section_euclidean}}
\label{euclid_appendix}

We define the potential function
\begin{equation}
    \label{potential}
    p_k = A_k (f(x_k)-f(x^*))+\frac{1+\mu A_k}{2}\|z_k-x^*\|^2
\end{equation}
our goal is to show that the sequence $\{p_k\}$ is non-increasing, so that we can obtain a bound for $f(x_k)-f(x^*)$.

In the work \citep{barre2021note} the authors also use a potential function approach to show convergence of A-HPE. Motivated by our linear coupling viewpoint, we present our analysis in a clearer way, which is helpful for addressing the key challenges that may arise in the Riemannian setting.
\\

We first present a simple lemma which will be used to simplify our analysis. It can be checked using simple algebraic calculations, so we omit its proof here.

\begin{lemma}[Interpolation implies contraction]
\label{simple}
For all $p,q\in\mathbb{R}$ such that $p+q>0$, we have
\begin{equation}
\notag
    p\|x\|^2+q\|y\|^2 = (p+q) \left\|\frac{p}{p+q}x+\frac{q}{p+q}y\right\|^2+\frac{pq}{p+q}\|x-y\|^2
\end{equation}
\end{lemma}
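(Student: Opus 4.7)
The plan is to verify the identity by direct algebraic expansion, since both sides are polynomials of degree two in $x$ and $y$ with respect to the inner product. The cleanest route is to expand the two terms on the right-hand side separately using $\|\alpha x + \beta y\|^2 = \alpha^2\|x\|^2 + 2\alpha\beta\langle x,y\rangle + \beta^2\|y\|^2$, and then check that the cross terms $\langle x,y\rangle$ cancel while the coefficients of $\|x\|^2$ and $\|y\|^2$ simplify to $p$ and $q$ respectively.

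Concretely, I would first compute
\begin{equation*}
(p+q)\left\|\tfrac{p}{p+q}x+\tfrac{q}{p+q}y\right\|^2 = \tfrac{1}{p+q}\bigl(p^2\|x\|^2 + 2pq\langle x,y\rangle + q^2\|y\|^2\bigr),
\end{equation*}
and then
\begin{equation*}
\tfrac{pq}{p+q}\|x-y\|^2 = \tfrac{pq}{p+q}\bigl(\|x\|^2 - 2\langle x,y\rangle + \|y\|^2\bigr).
\end{equation*}
Adding these, the $\langle x,y\rangle$ terms cancel (both have coefficient $\tfrac{2pq}{p+q}$ with opposite signs), and the $\|x\|^2$ coefficient becomes $\tfrac{p^2+pq}{p+q} = p$, with $q$ arising analogously for $\|y\|^2$. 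This yields the left-hand side. The hypothesis $p+q>0$ is used only to ensure the divisions are well-defined, so no subtlety arises there.

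An alternative, more conceptual viewpoint (which I would mention in passing rather than develop) is to recognize the identity as a weighted parallelogram/bias-variance decomposition: let $m := \tfrac{p}{p+q}x+\tfrac{q}{p+q}y$ be the weighted mean; then $x-m = \tfrac{q}{p+q}(x-y)$ and $y-m = -\tfrac{p}{p+q}(x-y)$, so the weighted second moment $p\|x\|^2+q\|y\|^2$ decomposes into $(p+q)\|m\|^2$ plus the weighted variance $p\|x-m\|^2+q\|y-m\|^2 = \tfrac{pq}{p+q}\|x-y\|^2$, with the cross term vanishing because $p(x-m)+q(y-m)=0$. There is no real obstacle here: the result is a routine identity, and the only thing to be careful about is not inadvertently assuming $p,q>0$ individually (only $p+q>0$ is required, which the expansion approach respects automatically).
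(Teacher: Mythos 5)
Your proof is correct, and it is exactly the "simple algebraic calculations" that the paper explicitly omits (the paper states the lemma without proof). The direct expansion you give — and the weighted-variance reinterpretation you mention in passing — are both fine, and your observation that only $p+q>0$ (not $p,q>0$ individually) is needed matches how the lemma is actually applied in the paper.
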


We define $\nabla_{k+1} := v_{k+1}+\mu\left( w_{k+1}-x_{k+1}\right) \in\partial f(w_{k+1})$, so the last line of \Cref{AHPE_Euclidean} can be re-written as
\begin{equation}
    \label{alternative_update}
    z_{k+1} \gets \frac{1+\mu A_k}{1+\mu A_{k+1}} z_k + \frac{\mu a_{k+1}}{1+\mu A_{k+1}}w_{k+1}-\frac{a_{k+1}}{1+\mu A_{k+1}}\nabla_{k+1}.
\end{equation}

The following lemma deals with the squared-distance terms in the potential function.

\begin{lemma}
\label{distance_lemma}
We have
\begin{equation}
\label{sc1}
    \begin{aligned}
    &\quad \frac{1+\mu A_k}{2}\|z_k-x^*\|^2 - \frac{1+\mu A_{k+1}}{2}\|z_{k+1}-x^*\|^2 
    \geq a_{k+1}(f(w_{k+1})-f(x^*)) \\
    &\quad + \frac{\mu a_{k+1}(1+\mu A_k)}{2(1+\mu A_{k+1})}\|z_k-w_{k+1}+\mu^{-1}\nabla_{k+1}\|^2 - \frac{a_{k+1}}{2\mu}\left\| \nabla_{k+1} \right\|^2
    \end{aligned}
\end{equation}
\end{lemma}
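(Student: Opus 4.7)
The plan is to treat the update for $z_{k+1}$ (as given by \Cref{alternative_update}) as an exact proximal-point step on the quadratic lower approximation $f_{w_{k+1}}$ of $f$, and then use strong convexity of the resulting auxiliary objective against $x^*$. Concretely, I would first verify by a direct first-order-optimality calculation that, with $\phi(z):=f_{w_{k+1}}(z)=f(w_{k+1})+\langle\nabla_{k+1},z-w_{k+1}\rangle+\tfrac{\mu}{2}\|z-w_{k+1}\|^2$, the minimizer of
\[
\Psi_k(z):=a_{k+1}\phi(z)+\tfrac{1+\mu A_k}{2}\|z-z_k\|^2
\]
is exactly $z_{k+1}$, because setting $\nabla\Psi_k(z)=0$ and solving yields the update in \Cref{alternative_update}. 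Since $\phi$ is $\mu$-strongly convex and $\tfrac12\|\cdot-z_k\|^2$ is $1$-strongly convex, $\Psi_k$ is $(1+\mu A_{k+1})$-strongly convex.

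Evaluating the strong-convexity inequality for $\Psi_k$ at $z=x^*$ gives
\[
\tfrac{1+\mu A_k}{2}\|x^*-z_k\|^2-\tfrac{1+\mu A_{k+1}}{2}\|x^*-z_{k+1}\|^2 \;\ge\; a_{k+1}(\phi(z_{k+1})-\phi(x^*))+\tfrac{1+\mu A_k}{2}\|z_{k+1}-z_k\|^2.
\]
Next I would invoke the fact that $\phi$ is a lower quadratic approximation of $f$, so $\phi(x^*)\le f(x^*)$, which replaces $\phi(x^*)$ by $f(x^*)$ on the right. The remaining task is to show the identity
\[
a_{k+1}\phi(z_{k+1})+\tfrac{1+\mu A_k}{2}\|z_{k+1}-z_k\|^2 = a_{k+1}f(w_{k+1})+\tfrac{\mu a_{k+1}(1+\mu A_k)}{2(1+\mu A_{k+1})}\|z_k-w_{k+1}+\mu^{-1}\nabla_{k+1}\|^2 - \tfrac{a_{k+1}}{2\mu}\|\nabla_{k+1}\|^2,
\]
which, combined with the previous display, yields the claim.

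To establish this identity I would first complete the square in $\phi(z_{k+1})$ to write $a_{k+1}\phi(z_{k+1})=a_{k+1}f(w_{k+1})+\tfrac{a_{k+1}\mu}{2}\|z_{k+1}-w_{k+1}+\mu^{-1}\nabla_{k+1}\|^2-\tfrac{a_{k+1}}{2\mu}\|\nabla_{k+1}\|^2$. Then the two quadratic pieces $\tfrac{a_{k+1}\mu}{2}\|z_{k+1}-w_{k+1}+\mu^{-1}\nabla_{k+1}\|^2$ and $\tfrac{1+\mu A_k}{2}\|z_{k+1}-z_k\|^2$ are combined by \Cref{simple} with weights $p=a_{k+1}\mu$ and $q=1+\mu A_k$; crucially, the ratio $p:q$ is precisely the ratio with which the update rule in \Cref{alternative_update} forms $z_{k+1}$ as a convex combination, so the weighted average of the vectors $z_{k+1}-w_{k+1}+\mu^{-1}\nabla_{k+1}$ and $z_{k+1}-z_k$ vanishes. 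Hence only the cross term survives and equals $\tfrac{pq}{p+q}\|z_k-w_{k+1}+\mu^{-1}\nabla_{k+1}\|^2 = \tfrac{\mu a_{k+1}(1+\mu A_k)}{1+\mu A_{k+1}}\|z_k-w_{k+1}+\mu^{-1}\nabla_{k+1}\|^2$, producing exactly the desired right-hand side.

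The main obstacle (and the reason the specific coefficients in \Cref{AHPE_Euclidean} appear) is this last algebraic collapse: it works only because the update rule for $z_{k+1}$ is designed so that the $p:q$ weights from the interpolation lemma match the convex combination defining $z_{k+1}$, making the weighted mean of the two completed-square vectors exactly zero. Once one notices this alignment, the argument is essentially a one-line application of \Cref{simple}; the subtlety lies in identifying the right decomposition and verifying that the update coefficients indeed produce the cancellation.
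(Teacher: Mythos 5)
Your proposal is correct; every step checks out. You recast the $z$-update as the exact minimizer of $\Psi_k(z)=a_{k+1}f_{w_{k+1}}(z)+\tfrac{1+\mu A_k}{2}\|z-z_k\|^2$, invoke the three-point property of the $(1+\mu A_{k+1})$-strongly-convex $\Psi_k$ at $x^*$, replace $f_{w_{k+1}}(x^*)$ by $f(x^*)$ using the lower-approximation property, and collapse the residual terms with \Cref{simple}, where the convex-combination weights in \Cref{alternative_update} make the interpolated vector vanish. The paper's appendix proof reaches the same inequality by a different organization of the same ingredients: it applies \Cref{simple} directly to the difference $\tfrac{1+\mu A_{k+1}}{2}\|z_{k+1}-x^*\|^2-\tfrac{1+\mu A_k}{2}\|z_k-x^*\|^2$ (with one negative weight), identifies the resulting vector as $x^*-w_{k+1}+\mu^{-1}\nabla_{k+1}$ via the update rule, and only then invokes strong convexity of $f$ at $w_{k+1}$ evaluated at $x^*$. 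The two arguments are equivalent in content — each uses \Cref{simple} once and the subgradient inequality $f(x^*)\ge f_{w_{k+1}}(x^*)$ once, and since $\Psi_k$ is quadratic your strong-convexity step is actually an identity — but your version literally executes the proximal-point reading that the paper only sketches in \Cref{overview}, which makes the role of the specific coefficients in \Cref{alternative_update} more transparent. One trivial bookkeeping remark: the quadratic pieces you combine carry factors $\tfrac{p}{2}$ and $\tfrac{q}{2}$, so the surviving cross term is $\tfrac{pq}{2(p+q)}\|z_k-w_{k+1}+\mu^{-1}\nabla_{k+1}\|^2$, i.e.\ the displayed $\tfrac{pq}{p+q}$ is missing the $\tfrac12$ that does appear in your final (correct) right-hand side.
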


\begin{proof}
First note that
\begin{subequations}
\label{distance_ineq}
    \begin{align}
    &\quad \frac{1+\mu A_{k+1}}{2}\|z_{k+1}-x^*\|^2 - \frac{1+\mu A_k}{2}\|z_k-x^*\|^2 \nonumber \\
    &=\frac{\mu a_{k+1}}{2}\left\| \frac{1+\mu A_{k+1}}{\mu a_{k+1}}(z_{k+1}-x^*) - \frac{1+\mu A_k}{\mu a_{k+1}}(z_k-x^*)\right\|^2 \label{distance_ineq_1st} \\
    &\quad - \frac{(1+\mu A_k)(1+\mu A_{k+1})}{2\mu a_{k+1}}\|z_{k+1}-z_k\|^2 \nonumber\\
    &= \frac{\mu a_{k+1}}{2} \| x^*-w_{k+1}+\mu^{-1}\nabla_{k+1}\|^2 - \frac{\mu a_{k+1}(1+\mu A_k)}{2(1+\mu A_{k+1})}\|z_k-x_{k+1}+\mu^{-1}v_{k+1}\|^2 \label{distance_ineq_2nd}
    \end{align}
\end{subequations}
where \Cref{simple} is used in \Cref{distance_ineq_1st}, and \Cref{distance_ineq_2nd} follows from \Cref{alternative_update}. Thus, by strong convexity of $f$ and the definition of $w_{k+1}$ (see \Cref{iprox}) we have
\begin{equation}
\notag
    \begin{aligned}
    f(x^*) &\geq f(w_{k+1}) + \left\langle \nabla_{k+1}, x^*-w_{k+1} \right\rangle + \frac{\mu}{2}\|x^*-w_{k+1}\|^2 \\
    &= f(w_{k+1})+ \frac{\mu}{2}\| x^*-w_{k+1}+\mu^{-1}\nabla_{k+1}\|^2-\frac{1}{2\mu}\left\| \nabla_{k+1} \right\|^2
    \end{aligned}
\end{equation}
so that
\begin{equation}
\notag
    \begin{aligned}
    & a_{k+1}(f(x^*)-f(w_{k+1})) \geq \frac{1+\mu A_{k+1}}{2}\|z_{k+1}-x^*\|^2 - \frac{1+\mu A_k}{2}\|z_k-x^*\|^2 \\
    &+ \frac{\mu a_{k+1}(1+\mu A_k)}{2(1+\mu A_{k+1})}\|z_k-w_{k+1}+\mu^{-1}\nabla_{k+1}\|^2 - \frac{a_{k+1}}{2\mu}\left\| \nabla_{k+1} \right\|^2
    \end{aligned}
\end{equation}
as desired.
\end{proof}

\begin{remark}
\label{distance_remark}
The derivation of \Cref{distance_ineq} reveals the connection between the choice of parameters in the update \Cref{alternative_update} and the growth of coefficient of the distance term in the construction of potential function. This observation will provide guidelines for choosing parameters in the Riemannian setting (cf. \Cref{reference_distortion}).
\end{remark}

Now it suffices to deal with the function value terms. Strong convexity implies that
\begin{equation}
\label{sc2}
    f(x_k) \geq f(w_{k+1})+ \frac{\mu}{2}\| x_k-w_{k+1}+\mu^{-1}\nabla_{k+1}\|^2-\frac{1}{2\mu}\left\| \nabla_{k+1} \right\|^2
\end{equation}
and
\begin{equation}
\label{sc3}
    f(x_{k+1}) \geq f(w_{k+1})+ \frac{\mu}{2}\| \mu^{-1}v_{k+1}\|^2-\frac{1}{2\mu}\left\| \nabla_{k+1} \right\|^2
\end{equation}
while the definition of $w_{k+1}$ implies
\begin{equation}
\label{def_w}
\begin{aligned}
&\quad \frac{\sigma_{k}^{2}}{2}\left\|x_{k+1}-y_{k}\right\|^{2} \geq \frac{1}{2}\left\|x_{k+1}-y_{k}+\lambda_{k} v_{k+1}\right\|^{2} \\
&+\lambda_{k}\left(1+\lambda_{k} \mu\right)\left(f\left(x_{k+1}\right)-f\left(w_{k+1}\right)+\frac{1}{2\mu}\left(\|\nabla_{k+1}\|^2-\|v_{k+1}\|^2\right)\right)
\end{aligned}
\end{equation}

We now seek a correct linear combination of the above inequalities to match the coefficient of $p_k-p_{k+1}$. Note that adding \eqref{sc3} and \eqref{def_w} leads to the following simpler inequality
\begin{equation}
    \label{simpler}
    \|x_{k+1}-y_k+\lambda_k v_{k+1}\|^2 \leq \sigma_k^2 \|x_{k+1}-y_k\|^2
\end{equation}

The following lemma proves non-increasing of the potential function, which is based on the above observations and results.

\begin{lemma}
\label{potential_dec}
We have for all $k\geq 0$ that
\begin{equation}
\notag
    p_k - p_{k+1} \geq \frac{\mu\lambda_k A_k(1+\mu A_k)}{2a_{k+1}}\|x_k-z_k\|^2 + \frac{(1-\sigma_k^2)A_{k+1}}{2\lambda_k}\|x_{k+1}-y_k\|^2
\end{equation}
\end{lemma}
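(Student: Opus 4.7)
The plan is to write $p_k - p_{k+1}$ as a telescope of function-value differences plus a distance difference, bound each piece with the tools already introduced, and then exploit the fact that the quadratic defining $a_{k+1}$ in Line~3 of \Cref{AHPE_Euclidean} is engineered precisely so that all cross-terms cancel and what remains reassembles into the stated bound. Concretely, I would start from the algebraic identity $p_k - p_{k+1} = A_k\bigl(f(x_k) - f(w_{k+1})\bigr) - A_{k+1}\bigl(f(x_{k+1}) - f(w_{k+1})\bigr) - a_{k+1}\bigl(f(w_{k+1}) - f(x^*)\bigr) + \bigl[\tfrac{1+\mu A_k}{2}\|z_k - x^*\|^2 - \tfrac{1+\mu A_{k+1}}{2}\|z_{k+1} - x^*\|^2\bigr]$, apply \Cref{distance_lemma} to the bracketed distance term (whose leading $a_{k+1}(f(w_{k+1}) - f(x^*))$ cancels the third summand), invoke~\eqref{sc2} to lower-bound $A_k(f(x_k) - f(w_{k+1}))$, and use~\eqref{def_w} multiplied by $A_{k+1}/(\lambda_k(1+\lambda_k\mu))$ to lower-bound $-A_{k+1}(f(x_{k+1}) - f(w_{k+1}))$. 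The weights are chosen so that the three contributions to $\|\nabla_{k+1}\|^2$---$-A_k/(2\mu)$ from~\eqref{sc2}, $+A_{k+1}/(2\mu)$ from~\eqref{def_w}, and $-a_{k+1}/(2\mu)$ from~\Cref{distance_lemma}---sum to zero because $A_{k+1} = A_k + a_{k+1}$.

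What remains is the sum of two squared distances to $w_{k+1} - \mu^{-1}\nabla_{k+1} = x_{k+1} - \mu^{-1}v_{k+1}$ (with weights $p = \mu A_k$ and $q = \mu a_{k+1}(1+\mu A_k)/(1+\mu A_{k+1})$), plus the iprox residual $\tfrac{A_{k+1}}{2\lambda_k(1+\lambda_k\mu)}\|x_{k+1} - y_k + \lambda_k v_{k+1}\|^2 - \tfrac{A_{k+1}}{2\mu}\|v_{k+1}\|^2 - \tfrac{A_{k+1}\sigma_k^2}{2\lambda_k(1+\lambda_k\mu)}\|x_{k+1} - y_k\|^2$. I would apply the interpolation identity \Cref{simple} to the first pair, with $u = x_k - w_{k+1} + \mu^{-1}\nabla_{k+1}$ and $v = z_k - w_{k+1} + \mu^{-1}\nabla_{k+1}$, so that $u - v = x_k - z_k$ supplies the desired $\|x_k - z_k\|^2$ term with coefficient $\tfrac{pq}{p+q}$, and the ``center'' vector equals $y_k - x_{k+1} + \mu^{-1}v_{k+1}$ (with coefficient $\tfrac{p+q}{2}$). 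Here it is crucial to verify that $p/(p+q)$ exactly matches the weight of $x_k$ in the $y_k$-update on Line~5 of \Cref{AHPE_Euclidean}; this is precisely what the defining quadratic $a_{k+1}^2 = \lambda_k\bigl(A_{k+1}(1+\mu A_k) + \mu A_k a_{k+1}\bigr)$ guarantees. The same identity yields $\tfrac{pq}{p+q} = \tfrac{\mu \lambda_k A_k(1+\mu A_k)}{a_{k+1}}$ and $p+q = \tfrac{\mu a_{k+1}^2}{\lambda_k(1+\mu A_{k+1})}$, producing exactly the first term of the claimed bound.

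The last step is to show that the center contribution $\tfrac{p+q}{2}\|y_k - x_{k+1} + \mu^{-1}v_{k+1}\|^2$ together with the two iprox-residual terms involving $v_{k+1}$ collapses into $\tfrac{A_{k+1}}{2\lambda_k}\|y_k - x_{k+1}\|^2$. Writing $a := y_k - x_{k+1}$ and $b := v_{k+1}$ and expanding in the basis $\|a\|^2, \|b\|^2, \langle a, b\rangle$, the same identity rewritten as $a_{k+1}^2(1+\lambda_k\mu) = \lambda_k A_{k+1}(1+\mu A_{k+1})$ makes both the $\|b\|^2$ and $\langle a, b\rangle$ coefficients vanish, leaving only $\tfrac{A_{k+1}}{2\lambda_k}\|a\|^2$. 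Combined with the $\sigma_k^2$ slack and the trivial bound $1+\lambda_k\mu \geq 1$, this gives the claimed coefficient $(1-\sigma_k^2)A_{k+1}/(2\lambda_k)$ on $\|x_{k+1} - y_k\|^2$. The main obstacle I expect is recognizing the double role played by the single quadratic identity defining $a_{k+1}$: it simultaneously forces $p/(p+q)$ to coincide with the prescribed $y_k$-coefficient \emph{and} kills the $\|v_{k+1}\|^2$ and cross-term in the center contribution. Once this ``coincidence by design'' is identified, the rest of the proof is routine bookkeeping.
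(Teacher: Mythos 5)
Your proof is correct and tracks the paper's argument closely: both decompose $p_k-p_{k+1}$ into three function-value/distance pieces, apply \Cref{distance_lemma} and inequalities \eqref{sc2}, \eqref{def_w} with the weights $A_k$, $A_{k+1}$, $a_{k+1}$ (so the $\|\nabla_{k+1}\|^2$ contributions cancel since $A_{k+1}=A_k+a_{k+1}$), apply \Cref{simple} with the weights $p=\mu A_k$, $q=\mu a_{k+1}(1+\mu A_k)/(1+\mu A_{k+1})$ matched to the $y_k$-update so that the center is $y_k-x_{k+1}+\mu^{-1}v_{k+1}$, and read off the $\|x_k-z_k\|^2$ coefficient from $\tfrac{1}{2}\cdot\tfrac{pq}{p+q}=\tfrac{\mu\lambda_k A_k(1+\mu A_k)}{2a_{k+1}}$ via the defining quadratic for $a_{k+1}$. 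The only real divergence is in the final step: you keep the exact iprox weight $A_{k+1}/(\lambda_k(1+\lambda_k\mu))$, under which the identity $a_{k+1}^2(1+\lambda_k\mu)=\lambda_k A_{k+1}(1+\mu A_{k+1})$ makes both the $\langle x_{k+1}-y_k,v_{k+1}\rangle$ and $\|v_{k+1}\|^2$ coefficients vanish, collapsing the remainder to $\tfrac{A_{k+1}}{2\lambda_k}\|x_{k+1}-y_k\|^2$ minus the $\sigma_k^2$ slack, which you then bound using $1+\lambda_k\mu\geq1$. The paper instead first enlarges the iprox coefficient to $A_{k+1}/\lambda_k$ (valid since the bracketed residual is $\leq0$ by \eqref{simpler}) and then checks nonnegativity of the remaining quadratic form via a discriminant computation $\beta^2=(\alpha-\alpha')\gamma$. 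Your route is arguably tidier, since it exhibits the cancellation as an exact identity rather than an inequality on a quadratic form, but the two are interchangeable and give the same bound. One minor quibble: you frame the match of $p/(p+q)$ with the $y_k$-weight as a consequence of the quadratic defining $a_{k+1}$, but it is in fact a direct algebraic identity independent of that quadratic; the quadratic's role is to simplify $p+q$ and $pq/(p+q)$ to the stated clean expressions. This is a presentation nuance, not an error.
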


\begin{proof}
By combining the inequalities \eqref{sc1},\eqref{sc2},\eqref{def_w} we have
\begin{equation}
    \label{long_ineq}
    \begin{aligned}
    &\quad p_k-p_{k+1} \\
    &= \underbrace{\left(\frac{1+\mu A_k}{2}\|z_k-x^*\|^2-\frac{1+A_{k+1}}{2}\|z_{k+1}-x^*\|^2+a_{k+1}\left( f(x^*)-f(w_{k+1})\right)\right)}_{\text{use }  \Cref{distance_lemma}} \\
    &\quad +\underbrace{A_k(f(x_{k})-f(w_{k+1}))}_{\text{use }\eqref{sc2}}+\underbrace{A_{k+1}(f(w_{k+1})-f(x_{k+1}))}_{\text{use }\eqref{def_w}} \\
    &\geq \frac{\mu a_{k+1}(1+\mu A_k)}{2(1+\mu A_{k+1})}\|z_k-x_{k+1}+\mu^{-1}v_{k+1}\|^2 + \frac{\mu A_k}{2}\|x_k-x_{k+1}+\mu^{-1}v_{k+1}\|^2 \\
    &\quad -\frac{A_{k+1}}{2\mu}\|\nabla_{k+1}\|^2 
    +\frac{A_{k+1}}{2\mu}\left( \|\nabla_{k+1}\|^2-\|v_{k+1}\|^2 \right) \\
    &\quad +\frac{A_{k+1}}{\lambda_k(1+\lambda_k\mu)}\left( \frac{1}{2}\|x_{k+1}-y_k+\lambda_k v_{k+1}\|^2-\frac{\sigma_k^2}{2}\|x_{k+1}-y_k\|^2 \right) \\
    \end{aligned}
\end{equation}
We now show that the last expression in the above inequality is positive. Recall that in \Cref{overview} we made the intuitive argument which shows that the ``positive term" of form $\theta_z \|z_k-x_{k+1}+\mu^{-1}v_{k+1}\|^2 + \theta_x \|x_k-x_{k+1}+\mu^{-1}v_{k+1}\|^2$ cannot be small. Formally, the choice of $y_k$ implies that
\begin{equation}
    \notag
    \begin{aligned}
    &\quad \frac{\mu a_{k+1}(1+\mu A_k)}{2(1+\mu A_{k+1})}\|z_k-x_{k+1}+\mu^{-1}v_{k+1}\|^2 + \frac{\mu A_k}{2}\|x_k-x_{k+1}+\mu^{-1}v_{k+1}\|^2 \\
    &\geq \frac{\mu\left( A_{k+1}+\mu(a_{k+1}A_k+A_k A_{k+1}) \right)}{2(1+\mu A_{k+1})} \|y_k-x_{k+1}+\mu^{-1}v_{k+1}\|^2 \\
    &\quad +\frac{\mu A_k a_{k+1}(1+\mu A_k)}{2\left( A_{k+1}+\mu(a_{k+1}A_k+A_k A_{k+1}) \right)}\|x_k-z_k\|^2 \\
    &= \frac{\mu a_{k+1}^2}{2\lambda_k(1+\mu A_{k+1})}\|y_k-x_{k+1}+\mu^{-1}v_{k+1}\|^2+\frac{\mu\lambda_k A_k(1+\mu A_k)}{2a_{k+1}}\|x_k-z_k\|^2
    \end{aligned}
\end{equation}
where we have used the following equation
\begin{equation}
\label{a_k+1}
    a_{k+1}^2=\lambda_k\left( A_{k+1}+\mu(a_{k+1}A_k+A_k A_{k+1}) \right)
\end{equation}
to simplify the expression. We can now deduce from \cref{simpler} that the right hand side of~\cref{long_ineq} is lower bounded by
\begin{equation}
\notag
    \begin{aligned}
    &\frac{\mu a_{k+1}^2}{2\lambda_k(1+\mu A_{k+1})}\|y_k-x_{k+1}+\mu^{-1}v_{k+1}\|^2+\frac{\mu\lambda_k A_k(1+\mu A_k)}{2a_{k+1}}\|x_k-z_k\|^2 \\
    &\quad -\frac{A_{k+1}}{2\mu}\|v_{k+1}\|^2+\frac{A_{k+1}}{\lambda_k}\bigl( \tfrac{1}{2}\|x_{k+1}-y_k+\lambda_k v_{k+1}\|^2-\tfrac{\sigma_k^2}{2}\|x_{k+1}-y_k\|^2 \bigr).
    \end{aligned}
\end{equation}

Now except from the $\|x_k-z_k\|^2$ term which is non-negative, the rest can be written as
\begin{equation}
    \label{quadratic_form}
    \alpha \|x_{k+1}-y_k\|^2+2\beta\left\langle x_{k+1}-y_k,v_{k+1}\right\rangle +\gamma \|v_{k+1}\|^2
\end{equation}
where
\begin{equation}
\notag
    \begin{aligned}
    \alpha &= \frac{\mu a_{k+1}^2}{2\lambda_k(1+\mu A_{k+1})}+\frac{1-\sigma_k^2}{2}\frac{A_{k+1}}{\lambda_k} \\
    \beta &= \frac{a_{k+1}^2}{2\lambda_k(1+\mu A_{k+1})}-\frac{1}{2}A_{k+1} = -\frac{\mu a_{k+1}^2}{2(1+\mu A_{k+1})} \\
    \gamma &= \frac{a_{k+1}^2}{2\mu\lambda_k(1+\mu A_{k+1})}-\frac{A_{k+1}}{2\mu}+\frac{1}{2}\lambda_k A_{k+1} \\
    &= \frac{1}{2}\lambda_k A_{k+1}-\frac{a_{k+1}^2}{2(1+\mu A_{k+1})} = \frac{\mu\lambda_k a_{k+1}^2}{2(1+\mu A_{k+1})}
    \end{aligned}
\end{equation}
where we have used \Cref{a_k+1} to simplify the expressions. Now it's easy to see that the desired inequality holds.
\end{proof}

\noindent We now make some remarks on the previous lemma.
\begin{enumerate}
\item Firstly, we can see from the proof that the choice of $a_{k+1}$ guarantees that the quadratic function \Cref{quadratic_form} is non-negative. The correct way of obtaining $a_{k+1}$ is to first deduce the quadratic function and then determine a proper choice of $a_{k+1}$ such that the function is always non-negative. This approach will be used to derive the update rule of $a_{k+1}$ in the Riemannian setting, where additional parameters need to be introduced due to the distortion phenomenon.

\item Secondly, as we have discussed before, $x_k$ and $z_k$ can both be regarded as an approximate proximal point iterate, and the point $y_k$ is chosen on the segment between $x_k$ and $z_k$ in order to combine these two approaches. %In the Riemannian setting, a natural generalization is to choose $y_k$ on the \textit{geodesic} between $x_k$ and $z_k$. This would cause some trouble when directly generalizing the proof in Euclidean setting, because we are working in the tangent space of $w_{k+1}$ and it is not necessarily true that $\mathtt{Exp}_{w_{k+1}}^{-1}(y_k)=\alpha\mathtt{Exp}_{w_{k+1}}^{-1}(z_k)+(1-\alpha)\mathtt{Exp}_{w_{k+1}}^{-1}(x_k)$ for some $\alpha \in [0,1]$. This turns out to be one of the main challenges in the Riemannian setting, and we would return to this point later in the analysis of Riemannian A-HPE.
The ratio $\|x_k-y_k\|:\|y_k-z_k\|$ follows naturally from the analysis and \Cref{simple}, which suggests the correct way of doing this combination.
\end{enumerate}

\Cref{euclidean_convergence} is now a direct corollary of \Cref{potential_dec}.

\begin{theorem}
\label{main_result_euclidean}
(\Cref{euclidean_convergence} restated) For the iterates produced by Algorithm \ref{AHPE_Euclidean}, we have
\begin{equation}
\notag
    \begin{aligned}
    f(x_k)-f(x^*) &\leq \frac{1}{A_k}\left( A_0(f(x_0)-f(x^*))+\frac{1+\mu A_0}{2}\|x_0-x^*\|^2 \right) \\
    &= \mathcal{O}\left( \Pi_{i=1}^{k}\left(1+\max\left\{ \mu\lambda_i,\sqrt{\mu\lambda_i}\right\}\right)^{-1}\right) \\
    \end{aligned}
\end{equation}
\end{theorem}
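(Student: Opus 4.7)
The plan is to deduce both parts of the theorem almost immediately from \Cref{potential_dec} together with a lower bound on the growth rate of the weights $A_k$. \Cref{potential_dec} shows that $p_k$ is non-increasing, so $p_k \leq p_0$ for every $k$. Dropping the non-negative squared-distance term in the definition $p_k = A_k(f(x_k)-f(x^*)) + \tfrac{1+\mu A_k}{2}\|z_k-x^*\|^2$ and dividing by $A_k$ yields
\begin{equation*}
f(x_k) - f(x^*) \leq \frac{p_0}{A_k} = \frac{A_0(f(x_0)-f(x^*)) + \tfrac{1+\mu A_0}{2}\|x_0-x^*\|^2}{A_k},
\end{equation*}
which is the first displayed inequality.

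For the asymptotic bound, the remaining task is a lower bound on $A_k$. The closed-form choice of $a_{k+1}$ in \Cref{AHPE_Euclidean} is equivalent to the identity \Cref{a_k+1}, namely $a_{k+1}^2 = \lambda_{k+1}(A_{k+1} + \mu a_{k+1} A_k + \mu A_k A_{k+1})$. Keeping only the last term gives $a_{k+1}^2 \geq \mu\lambda_{k+1} A_k A_{k+1}$; introducing the ratio $\rho_k := A_{k+1}/A_k$ and dividing through by $A_k A_{k+1}$ this becomes $\rho_k + \rho_k^{-1} \geq 2 + \mu\lambda_{k+1}$. Solving this scalar quadratic inequality yields
\begin{equation*}
\rho_k \geq 1 + \tfrac{\mu\lambda_{k+1}}{2} + \sqrt{\mu\lambda_{k+1} + (\mu\lambda_{k+1})^2/4} = \Omega\bigl(1 + \max\{\mu\lambda_{k+1},\sqrt{\mu\lambda_{k+1}}\}\bigr),
\end{equation*}
which handles the regimes $\mu\lambda_{k+1}\leq 1$ and $\mu\lambda_{k+1}\geq 1$ uniformly. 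Telescoping these per-step ratios delivers the asserted product lower bound on $A_k$, from which the claimed $\mathcal{O}$-rate immediately follows.

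The only detail that requires care is initialization: if $A_0 = 0$ then the ratio $\rho_0$ is not defined. I would handle this with the complementary consequence $a_{k+1}^2 \geq \lambda_{k+1} A_{k+1}$ of \Cref{a_k+1}, which at $k=0$ gives $a_1 \geq \lambda_1$ and hence $A_1 > 0$; the ratio argument applies from step $k=1$ onward, and the lost first factor is absorbed into the $\mathcal{O}$-constant. The same complementary bound covers the degenerate case $\mu = 0$, since there $\max\{\mu\lambda,\sqrt{\mu\lambda}\} = 0$ and the product equals $1$, while $a_{k+1}^2 \geq \lambda_{k+1} A_{k+1}$ combined with $a_{k+1} \leq 2\sqrt{A_{k+1}}(\sqrt{A_{k+1}} - \sqrt{A_k})$ forces $\sqrt{A_{k+1}} - \sqrt{A_k} \geq \tfrac12\sqrt{\lambda_{k+1}} > 0$ and hence nontrivial growth of $A_k$.

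I do not anticipate any serious obstacle: all the hard analytic work has been carried out inside \Cref{potential_dec}, and what remains is elementary manipulation of the scalar recursion for $A_k$. The only mildly subtle point is recognising that the single identity \Cref{a_k+1} yields two useful lower bounds on $a_{k+1}$—one giving $\rho_k \geq 1 + \Omega(\sqrt{\mu\lambda_{k+1}})$ and the other giving $\rho_k \geq 1 + \Omega(\mu\lambda_{k+1})$ when $\mu\lambda_{k+1}\geq 1$—and that taking their maximum interpolates correctly across both regimes appearing in the theorem.
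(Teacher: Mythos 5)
Your proposal is correct and follows essentially the same route as the paper: potential decrease from \Cref{potential_dec} gives $f(x_k)-f(x^*)\leq p_0/A_k$, and the rate then follows from a per-step multiplicative lower bound $A_{k+1}/A_k \geq 1+\max\{\mu\lambda_{k+1},\sqrt{\mu\lambda_{k+1}}\}$, which you extract from the quadratic identity \Cref{a_k+1} while the paper reads it off the explicit closed-form root (the two are equivalent). Your additional handling of the degenerate initialization $A_0=0$ via $a_1\geq\lambda_1$ is a worthwhile refinement, since the paper's telescoping argument implicitly requires $A_0>0$ for the product bound to be non-vacuous.
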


\begin{proof}
Since $p_0 \geq p_k \geq A_k(f(x_k)-f(x^*))$, we have
\begin{equation}
    \notag
    f(x_k)-f(x^*) \leq \frac{1}{A_k} p_0 = \frac{1}{A_k}\left( A_0(f(x_0)-f(x^*))+\frac{1+\mu A_0}{2}\|x_0-x^*\|^2 \right).
\end{equation}
Note that
\begin{equation}
    \notag
    \begin{aligned}
    a_{k+1} &= A_{k+1} - A_k = \frac{\left(1+2 \mu A_{k}\right) \lambda_{k+1}+\sqrt{\left(1+2 \mu A_{k}\right)^{2} \lambda_{k+1}^{2}+4\left(1+\mu A_{k}\right) A_{k} \lambda_{k+1}}}{2} \\
    &\geq A_k \max\left\{ \mu\lambda_{k+1}, \sqrt{\mu\lambda_{k+1}}\right\},
    \end{aligned}
\end{equation}
so that the conclusion follows.
\end{proof}

\section{Details of \Cref{section_riemann}}
\label{riem_appendix}

\subsection{Some useful properties of \Cref{AHPE_Riemann}}
The following lemma characterize the growth rate of sequence $\{A_k\}$, which is closely related to the convergence rate of \Cref{AHPE_Riemann}.
\begin{lemma}
\label{increasing_A}
For all $k \geq 0$, we have $A_{k+1} = (1+\mu\lambda_k)(\theta_k a_{k+1} + A_k)$.
\end{lemma}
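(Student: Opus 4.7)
The plan is to do this by direct algebraic manipulation starting from the defining equation of $\theta_k$ on Line~4 of \Cref{AHPE_Riemann}, and then eliminating $B_k$ and $B_{k+1}$ using the relations in Line~5. There is no conceptual obstacle here; the only thing to watch is bookkeeping of the factors of $\mu$, $\theta_k$, and $\delta_k$.

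First I would rewrite the quadratic equation for $\theta_k$, namely
\begin{equation*}
B_k(1-\theta_k)^2 = \mu\lambda_k\theta_k\bigl((1-\theta_k)B_k + \tfrac{\mu}{2}\delta_k A_k\bigr),
\end{equation*}
by isolating the factor $(1-\theta_k)B_k$ on the left-hand side. From the definitions $B_{k+1} = B_k/(\theta_k\delta_k)$ and $a_{k+1} = 2\mu^{-1}(1-\theta_k)B_{k+1}$ I obtain the key identity
\begin{equation*}
(1-\theta_k)B_k = \theta_k\delta_k(1-\theta_k)B_{k+1} = \tfrac{\mu}{2}\theta_k\delta_k a_{k+1}.
\end{equation*}

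Next I would substitute this identity into both sides of the defining equation. The left-hand side becomes $\frac{\mu}{2}\theta_k\delta_k a_{k+1}(1-\theta_k)$, while the right-hand side expands to $\frac{\mu^2}{2}\lambda_k\theta_k^2\delta_k a_{k+1} + \frac{\mu^2}{2}\lambda_k\theta_k\delta_k A_k$. Dividing through by the common factor $\frac{\mu}{2}\theta_k\delta_k$ leaves the clean relation
\begin{equation*}
(1-\theta_k)a_{k+1} = \mu\lambda_k\theta_k a_{k+1} + \mu\lambda_k A_k.
\end{equation*}

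Finally I would rearrange: adding $\theta_k a_{k+1}+A_k$ to both sides yields $a_{k+1}+A_k$ on the left and $\theta_k a_{k+1}(1+\mu\lambda_k) + A_k(1+\mu\lambda_k)$ on the right, which is exactly $A_{k+1} = (1+\mu\lambda_k)(\theta_k a_{k+1}+A_k)$. This identity is the Riemannian analog of \cref{a_k+1} in the Euclidean proof and will play the same role in later analysis, namely simplifying the coefficient matching when bounding the quadratic form that appears in the potential decrease inequality.
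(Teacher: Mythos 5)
Your proof is correct and follows essentially the same route as the paper's: both derive the identity $(1-\theta_k)B_k = \frac{\mu}{2}\theta_k\delta_k a_{k+1}$ from Line~5 of \Cref{AHPE_Riemann}, substitute it into the quadratic defining $\theta_k$, cancel the common factor $\frac{\mu}{2}\theta_k\delta_k$, and rearrange. You simply spell out the intermediate expansion of the right-hand side more explicitly; there is no substantive difference.
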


\begin{proof}
  Since
  \begin{equation}
  \notag
      (1-\theta_k)B_k = (1-\theta_k)\theta_k\delta_k B_{k+1} = \frac{\mu}{2}\theta_k\delta_k a_{k+1},
  \end{equation}
  the equation $B_k(1-\theta_k)^2 = \mu\lambda_k\theta_k\left( (1-\theta_k)B_k+\frac{\mu}{2}\delta_k A_k\right)$ can be equivalently written as
  \begin{equation}
      \notag
      \begin{aligned}
      &\qquad (1-\theta_k)\frac{\mu}{2}\theta_k\delta_k a_{k+1} = \mu\lambda_k\theta_k \cdot \frac{\mu}{2}\delta_k(A_k+\theta_k a_{k+1}) \\
      &\Leftrightarrow (1-\theta_k)a_{k+1} = \mu\lambda_k(A_k+\theta_k a_{k+1}) \\
      &\Leftrightarrow A_{k+1}=A_k+a_{k+1} = (1+\mu\lambda_k)(A_k+\theta_k a_{k+1}).
      \end{aligned}
  \end{equation}
  The conclusion follows.
\end{proof}

The next lemma reveals the relationship between the ratio of coefficients $A_k$ and $B_k$ and an important quantity $\xi_k = \frac{a_k}{A_k}$ (defined in the proof of \Cref{simple_convergence_thm}). Recall that in the Euclidean setting, we have the equation $B_k = \frac{1+\mu A_k}{2}$, but the situation is more complex in the Riemannian setting due to the distortion rate $\delta_k$.

\begin{lemma}
For any $k\geq 0$, we have
\begin{equation}
    \notag
    \frac{B_{k+1}}{A_{k+1}} = \frac{1+\mu\lambda_k}{2\lambda_k}\left( \frac{a_{k+1}}{A_{k+1}}\right)^2 = \frac{1+\mu\lambda_k}{2\lambda_k} \xi_{k+1}^2.
\end{equation}
\end{lemma}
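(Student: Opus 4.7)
My plan is to reduce the claimed identity to two simple pieces: the recurrence $a_{k+1} = \tfrac{2}{\mu}(1-\theta_k) B_{k+1}$ taken directly from Line~5 of \Cref{AHPE_Riemann}, and an algebraic consequence of the preceding \Cref{increasing_A}. These two together will collapse the identity into a one-line rearrangement, so there is essentially no real obstacle — it is a bookkeeping lemma.

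\textbf{Step 1 (Rewrite \Cref{increasing_A}).} The identity $A_{k+1} = (1+\mu\lambda_k)(\theta_k a_{k+1}+A_k)$, combined with $A_{k+1} = A_k + a_{k+1}$, rearranges to
\begin{equation*}
(1-\theta_k)\, a_{k+1} \;=\; \frac{\mu\lambda_k}{1+\mu\lambda_k}\, A_{k+1}.
\end{equation*}
This is obtained by expanding $(1+\mu\lambda_k)(\theta_k a_{k+1}+A_k) = A_k + a_{k+1}$, moving $A_k + \theta_k a_{k+1}$ to one side, and recognizing $A_k + \theta_k a_{k+1} = A_{k+1}/(1+\mu\lambda_k)$.

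\textbf{Step 2 (Use the update rule for $a_{k+1}$).} Line~5 of \Cref{AHPE_Riemann} states $a_{k+1} = \tfrac{2}{\mu}(1-\theta_k)B_{k+1}$, i.e.\ $(1-\theta_k) = \tfrac{\mu a_{k+1}}{2 B_{k+1}}$. Substituting this into the identity from Step~1 yields
\begin{equation*}
\frac{\mu a_{k+1}}{2 B_{k+1}}\cdot a_{k+1} \;=\; \frac{\mu\lambda_k}{1+\mu\lambda_k}\, A_{k+1},
\end{equation*}
which after cancelling $\mu$ and cross-multiplying gives $\tfrac{a_{k+1}^2}{2B_{k+1}} = \tfrac{\lambda_k A_{k+1}}{1+\mu\lambda_k}$.

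\textbf{Step 3 (Finish).} Rearranging the previous display and dividing by $A_{k+1}$ gives
\begin{equation*}
\frac{B_{k+1}}{A_{k+1}} \;=\; \frac{1+\mu\lambda_k}{2\lambda_k}\cdot \frac{a_{k+1}^2}{A_{k+1}^2} \;=\; \frac{1+\mu\lambda_k}{2\lambda_k}\,\xi_{k+1}^2,
\end{equation*}
which is the claim. No additional analytic input, and in particular no use of the distortion rate $\delta_k$ beyond the fact that it enters the definitions of $\theta_k$ and $B_{k+1}$ consistently, is needed — \Cref{increasing_A} has already absorbed all that information. The only thing to be careful about is that we are dividing by $B_{k+1}$ and $A_{k+1}$, both of which are positive whenever $B_0, A_0 > 0$ (and $\theta_k \in (0,1)$, $\delta_k > 0$ by construction), so the manipulations are valid from the first iteration onward.
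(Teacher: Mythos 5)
Your proof is correct and follows essentially the same route as the paper: both derive $(1-\theta_k)a_{k+1} = \tfrac{\mu\lambda_k}{1+\mu\lambda_k}A_{k+1}$ from \Cref{increasing_A}, combine it with $a_{k+1} = \tfrac{2}{\mu}(1-\theta_k)B_{k+1}$, and rearrange. The positivity remark at the end is a nice touch but otherwise the arguments are the same bookkeeping computation.
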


\begin{proof}
Recall that we have $A_{k+1}=(1+\mu\lambda_k)(\theta_k a_{k+1}+A_k) = (1+\mu\lambda_k)(A_{k+1}-(1-\theta_k)a_{k+1})$, so that
\begin{equation}
    \notag
    1-\theta_k = \frac{\mu\lambda_k A_{k+1}}{(1+\mu\lambda_k)a_{k+1}}.
\end{equation}
We can then obtain
\begin{equation}
    \notag
    \frac{B_{k+1}}{A_{k+1}} = \frac{\mu}{2}\frac{a_{k+1}}{(1-\theta_k)A_{k+1}} = \frac{1+\mu\lambda_k}{2\lambda_k}\left( \frac{a_{k+1}}{A_{k+1}}\right)^2,
\end{equation}
as desired.
\end{proof}

\subsection{Potential function analysis}
\begin{lemma}[restatement of \Cref{reference_distortion}]
\label{reference_distortion_appendix}
Suppose that $\delta_k>0$ is a valid distortion rate and $B_{k+1} = \frac{B_k}{\theta_k\delta_k}$, then 
\begin{equation}
    \notag
    \begin{aligned}
    B_{k}d_{w_k}^2(z_k,x^*)-B_{k+1}d_{w_{k+1}}^2(z_{k+1},x^*)
    &\geq (1-\theta_k)B_{k+1}\left(\frac{2}{\mu}(f(w_{k+1})-f(x^*))-\frac{1}{\mu^2}\|\nabla_{k+1}\|^2\right) \\
    &\quad + \theta_k(1-\theta_k)B_{k+1}\left\| \mathtt{Exp}_{w_{k+1}}^{-1}(z_k)-\mathtt{Exp}_{w_{k+1}}^{-1}(x_{k+1})+\mu^{-1}v_{k+1} \right\|^2
    \end{aligned}
\end{equation}
\end{lemma}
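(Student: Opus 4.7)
The plan is to bring everything into the single tangent space $T_{w_{k+1}}\mathcal{M}$ and then mimic the Euclidean argument from \Cref{distance_lemma}. The key enabler is the valid distortion rate, which together with the coefficient update $B_{k+1}=B_k/(\theta_k\delta_k)$ lets me estimate
\[
B_k\, d_{w_k}^2(z_k,x^*) \;\geq\; \frac{B_k}{\delta_k}\, d_{w_{k+1}}^2(z_k,x^*) \;=\; \theta_k B_{k+1}\, d_{w_{k+1}}^2(z_k,x^*),
\]
after which it suffices to prove the claim with $B_k\, d_{w_k}^2(z_k,x^*)$ replaced by $\theta_k B_{k+1}\, d_{w_{k+1}}^2(z_k,x^*)$. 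Both remaining distance terms then live in the same tangent space.

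Introducing the abbreviations $\tilde z_k=\mathtt{Exp}_{w_{k+1}}^{-1}(z_k)$, $\tilde x_{k+1}=\mathtt{Exp}_{w_{k+1}}^{-1}(x_{k+1})$, $\tilde x^*=\mathtt{Exp}_{w_{k+1}}^{-1}(x^*)$, $\tilde z_{k+1}=\mathtt{Exp}_{w_{k+1}}^{-1}(z_{k+1})$, Line~9 of \Cref{AHPE_Riemann} reads $\tilde z_{k+1}=(1-\theta_k)\tilde x_{k+1}+\theta_k\tilde z_k-\tfrac{1-\theta_k}{\mu}v_{k+1}$, so that
\[
\tilde z_{k+1}-\tilde x^* \;=\; \theta_k(\tilde z_k-\tilde x^*) + (1-\theta_k)\bigl(\tilde x_{k+1}-\tilde x^*-\mu^{-1}v_{k+1}\bigr).
\]
Applying the standard convex-combination identity $\|\theta a+(1-\theta)b\|^2=\theta\|a\|^2+(1-\theta)\|b\|^2-\theta(1-\theta)\|a-b\|^2$ inside $T_{w_{k+1}}\mathcal{M}$ (which is a genuine Euclidean space), I obtain
\[
\|\tilde z_{k+1}-\tilde x^*\|^2 = \theta_k\|\tilde z_k-\tilde x^*\|^2 + (1-\theta_k)\|\tilde x_{k+1}-\tilde x^*-\mu^{-1}v_{k+1}\|^2 - \theta_k(1-\theta_k)\|\tilde z_k-\tilde x_{k+1}+\mu^{-1}v_{k+1}\|^2.
\]
Multiplying by $B_{k+1}$ and subtracting from $\theta_k B_{k+1}\|\tilde z_k-\tilde x^*\|^2$ cancels the $\|\tilde z_k-\tilde x^*\|^2$ term exactly, leaving
\[
\theta_k B_{k+1}\|\tilde z_k-\tilde x^*\|^2-B_{k+1}\|\tilde z_{k+1}-\tilde x^*\|^2 = -(1-\theta_k)B_{k+1}\|\tilde x_{k+1}-\tilde x^*-\mu^{-1}v_{k+1}\|^2 + \theta_k(1-\theta_k)B_{k+1}\|\tilde z_k-\tilde x_{k+1}+\mu^{-1}v_{k+1}\|^2.
\]

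Finally I would convert the first (negative) term into the advertised form using Riemannian $\mu$-strong-convexity at $w_{k+1}$. By \Cref{Riemannian_iprox}, the vector $\nabla_{k+1}:=v_{k+1}-\mu\tilde x_{k+1}$ lies in $\partial f(w_{k+1})$, so the strong-convexity inequality gives $f(x^*)\geq f(w_{k+1})+\langle\nabla_{k+1},\tilde x^*\rangle+\tfrac{\mu}{2}\|\tilde x^*\|^2$. Noting the algebraic identity
\[
\tfrac{\mu}{2}\|\tilde x^*\|^2+\langle\nabla_{k+1},\tilde x^*\rangle = \tfrac{\mu}{2}\|\tilde x_{k+1}-\tilde x^*-\mu^{-1}v_{k+1}\|^2 - \tfrac{1}{2\mu}\|\nabla_{k+1}\|^2,
\]
(which follows by expanding both squares and using $\tilde x_{k+1}-\mu^{-1}v_{k+1}=-\mu^{-1}\nabla_{k+1}$) yields $\|\tilde x_{k+1}-\tilde x^*-\mu^{-1}v_{k+1}\|^2\leq\tfrac{2}{\mu}(f(x^*)-f(w_{k+1}))+\tfrac{1}{\mu^2}\|\nabla_{k+1}\|^2$. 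Substituting into the previous display produces exactly the claimed bound.

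The main obstacle, and the only genuinely Riemannian step, is the first one: passing from $T_{w_k}\mathcal{M}$ to $T_{w_{k+1}}\mathcal{M}$ without losing control of the coefficient. The choice $B_{k+1}=B_k/(\theta_k\delta_k)$ in \Cref{AHPE_Riemann} is engineered precisely so that the distortion factor $\delta_k$ is absorbed cleanly, leaving a target weight $\theta_k B_{k+1}$ that matches what the convex-combination identity demands from $\|\tilde z_k-\tilde x^*\|^2$; after that the remainder of the argument is a direct transcription of the Euclidean proof into the common tangent space.
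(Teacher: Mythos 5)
Your proposal is correct and follows essentially the same route as the paper's proof: apply the valid-distortion inequality so that $B_k d_{w_k}^2(z_k,x^*)\geq\theta_k B_{k+1}d_{w_{k+1}}^2(z_k,x^*)$, expand the update of $z_{k+1}$ as a convex combination in $T_{w_{k+1}}\mathcal{M}$ via the interpolation identity (\Cref{simple}), and then absorb the negative term using $\mu$-strong convexity at $w_{k+1}$ together with $\nabla_{k+1}=v_{k+1}-\mu\,\mathtt{Exp}_{w_{k+1}}^{-1}(x_{k+1})\in\partial f(w_{k+1})$. The only difference is cosmetic: you write the convex-combination identity directly on $\tilde z_{k+1}-\tilde x^*$, which is a cleaner rendering of the same computation.
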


\begin{proof}

Since $\delta_k$ is a valid distortion rate, we have 
\begin{equation}
\label{apply_distortion_ineq}
    B_{k}d_{w_k}^2(z_k,x^*) \geq \frac{B_{k}}{\delta_k}d_{w_{k+1}}^2(z_k,x^*)
\end{equation}
This implies that
\begin{subequations}\label{Riemann_distance_ineq}
\begin{align}
    &\quad B_{k+1}d_{w_{k+1}}^2(z_{k+1},x^*)-B_{k}d_{w_k}^2(z_k,x^*)
    %&\leq B_{k+1}d_{w_{k+1}}^2(\tilde{z}_{k+1},x^*)-B_{k}d_{w_k}^2(z_k,x^*)\label{use_proj}\\
    \leq B_{k+1}d_{w_{k+1}}^2(z_{k+1},x^*)-\theta_k B_{k+1} d_{w_{k+1}}^2(z_k,x^*)\label{Riemann_distance_ineq1} \\
    &= (1-\theta_k)B_{k+1}\left( \frac{1}{1-\theta_k}d_{w_{k+1}}(z_{k+1},x^*) -\frac{\theta_k}{1-\theta_k}d_{w_{k+1}}(z_k,x^*) \right)^2 \label{Riemann_distance_ineq2} \\
    &\quad -\frac{\theta_k}{1-\theta_k}\left( d_{w_{k+1}}(z_{k+1},x^*)-d_{w_{k+1}}(z_{k},x^*)\right)^2 \nonumber\\
    &= (1-\theta_k)B_{k+1}\left\| \mathtt{Exp}_{w_{k+1}}^{-1}(x^*)-\mathtt{Exp}_{w_{k+1}}^{-1}(x_{k+1})+\mu^{-1}v_{k+1} \right\|^2 \label{Riemann_distance_ineq3}\\
    &\quad -\theta_k(1-\theta_k)B_{k+1}\left\| \mathtt{Exp}_{w_{k+1}}^{-1}(z_k)-\mathtt{Exp}_{w_{k+1}}^{-1}(x_{k+1})+\mu^{-1}v_{k+1} \right\|^2 \nonumber
\end{align}
\end{subequations}
where  \Cref{Riemann_distance_ineq1} follows from \Cref{apply_distortion_ineq} and $\theta_k B_{k+1}=\frac{B_k}{\delta_k}$, \Cref{Riemann_distance_ineq2} uses \Cref{simple}, and \Cref{Riemann_distance_ineq3} follows from the definition of $z_{k+1}$. On the other hand, by strong convexity of $f$, we have
\begin{equation}
    \notag
    \begin{aligned}
    f(x^*)-f(w_{k+1}) &\geq \left\langle \mathtt{Exp}_{w_{k+1}}^{-1}(x^*),\nabla_{k+1}\right\rangle +\frac{\mu}{2}\|\mathtt{Exp}_{w_{k+1}}^{-1}\|^2 \\
    &= \frac{\mu}{2}\|\mathtt{Exp}_{w_{k+1}}^{-1}(x^*)+\mu^{-1}\nabla_{k+1}\|^2 - \frac{1}{2\mu}\|\nabla_{k+1}\|^2 \\
    &= \frac{\mu}{2}\left\| \mathtt{Exp}_{w_{k+1}}^{-1}(x^*)-\mathtt{Exp}_{w_{k+1}}^{-1}(x_{k+1})+\mu^{-1}v_{k+1} \right\|^2- \frac{1}{2\mu}\|\nabla_{k+1}\|^2
    \end{aligned}
\end{equation}
The conclusion follows by plugging this inequality into \Cref{Riemann_distance_ineq}. Note that the steps after \Cref{Riemann_distance_ineq1} are essentially the same as the Euclidean setting, because all the calculations are done in the tangent space $T_{w_{k+1}}\mathcal{M}$.
\end{proof}

We then proceed to derive a Riemannian analog of \Cref{potential_dec}, where we proved the potential decrease in the Euclidean setting. By following the same approach as \Cref{potential_dec}, we can see that the inequality would involve an additional point $y_k'$. 
\begin{lemma}[restatement of \Cref{update_distortion}]
\label{update_distortion_appendix}
Suppose that $a_{k+1}= A_{k+1}-A_k= \frac{2}{\mu}(1-\theta_k)B_{k+1}$, then 
\begin{equation}
\label{update_distortion_eq_appendix}
    \begin{aligned}
    p_k-p_{k+1} &\geq \frac{\mu}{2}(\theta_k a_{k+1}+A_k)\left\| \mathtt{Exp}_{w_{k+1}}^{-1}(y_k')-\mathtt{Exp}_{w_{k+1}}^{-1}(x_{k+1})+\mu^{-1}v_{k+1} \right\|^2 \\
    &+\frac{A_{k+1}}{2\lambda_k\sigma_k}\left\| \mathtt{Exp}_{w_{k+1}}^{-1}(y_k)-\mathtt{Exp}_{w_{k+1}}^{-1}(x_{k+1})-\lambda_k v_{k+1} \right\|^2 \\
    &+ \frac{\mu\theta_{k}a_{k+1}A_k}{2(A_k+\theta_{k}a_{k+1})}d_{w_{k+1}}^2(x_k,z_k) - \frac{\sigma_k A_{k+1}}{2\lambda_k}d_{w_{k+1}}^2(x_{k+1},y_k)-\frac{A_{k+1}}{2\mu}\|v_{k+1}\|^2
    \end{aligned}
\end{equation}
where 
\begin{equation}
    \label{def_yk_prime_appendix}
    y_k' = \mathtt{Exp}_{w_{k+1}}\left( \frac{A_k}{A_k+\theta_{k}a_{k+1}}\mathtt{Exp}_{w_{k+1}}^{-1}(x_k)+\frac{\theta_{k}a_{k+1}}{A_k+\theta_{k}a_{k+1}}\mathtt{Exp}_{w_{k+1}}^{-1}(z_k) \right)
\end{equation}
\end{lemma}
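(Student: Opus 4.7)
The plan is to mirror the Euclidean proof of \Cref{potential_dec} step by step, performing every manipulation inside the single tangent space $T_{w_{k+1}}\mathcal{M}$ so that ordinary linear-algebraic identities remain available, and replacing \Cref{distance_lemma} by its Riemannian analogue \Cref{reference_distortion_appendix}. I would start from the decomposition
\begin{equation*}
p_k - p_{k+1} = A_k\bigl(f(x_k)-f(w_{k+1})\bigr) + A_{k+1}\bigl(f(w_{k+1})-f(x_{k+1})\bigr) + a_{k+1}\bigl(f(x^*)-f(w_{k+1})\bigr) + \bigl[B_k d_{w_k}^2(z_k,x^*) - B_{k+1} d_{w_{k+1}}^2(z_{k+1},x^*)\bigr],
\end{equation*}
and lower-bound the four groups separately.

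For the distance bracket I would invoke \Cref{reference_distortion_appendix}; using the identity $(1-\theta_k)B_{k+1}=\tfrac{\mu}{2}a_{k+1}$, which is exactly the step-size relation enforced in the hypothesis, the factor $\tfrac{2}{\mu}(1-\theta_k)B_{k+1}=a_{k+1}$ makes the contribution $a_{k+1}(f(w_{k+1})-f(x^*))$, cancelling the third group of the decomposition. What remains is the quadratic $\tfrac{\mu\theta_k a_{k+1}}{2}\bigl\|\mathtt{Exp}_{w_{k+1}}^{-1}(z_k)-\mathtt{Exp}_{w_{k+1}}^{-1}(x_{k+1})+\mu^{-1}v_{k+1}\bigr\|^2$ and an error $-\tfrac{a_{k+1}}{2\mu}\|\nabla_{k+1}\|^2$, where $\nabla_{k+1}:=v_{k+1}-\mu\mathtt{Exp}_{w_{k+1}}^{-1}(x_{k+1})\in\partial f(w_{k+1})$. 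For the first group I would apply geodesic strong convexity of $f$ at $w_{k+1}$ evaluated at $x_k$ and complete the square in $T_{w_{k+1}}\mathcal{M}$, yielding $\tfrac{\mu A_k}{2}\bigl\|\mathtt{Exp}_{w_{k+1}}^{-1}(x_k)-\mathtt{Exp}_{w_{k+1}}^{-1}(x_{k+1})+\mu^{-1}v_{k+1}\bigr\|^2 - \tfrac{A_k}{2\mu}\|\nabla_{k+1}\|^2$. For the second group I would rearrange the Riemannian iprox inequality \eqref{Riemann_iprox}, eliminating the inner product $\langle\mathtt{Exp}_{w_{k+1}}^{-1}(x_{k+1}),v_{k+1}\rangle$ via the identity that converts $-\langle u,v_{k+1}\rangle+\tfrac{\mu}{2}\|u\|^2$ with $u=\mathtt{Exp}_{w_{k+1}}^{-1}(x_{k+1})$ into $\tfrac{1}{2\mu}(\|\nabla_{k+1}\|^2-\|v_{k+1}\|^2)$, to get
\begin{equation*}
f(w_{k+1})-f(x_{k+1}) \geq \tfrac{1}{2\mu}\bigl(\|\nabla_{k+1}\|^2-\|v_{k+1}\|^2\bigr) + \tfrac{1}{2\lambda_k(1+\lambda_k\mu)}\bigl(\|\mathsf{A}\|^2-\sigma_k^2\|\mathsf{B}\|^2\bigr),
\end{equation*}
with $\mathsf{A}:=\mathtt{Exp}_{w_{k+1}}^{-1}(x_{k+1})-\mathtt{Exp}_{w_{k+1}}^{-1}(y_k)+\lambda_k v_{k+1}$ and $\|\mathsf{B}\|:=d_{w_{k+1}}(x_{k+1},y_k)$. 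Feeding iprox back into itself together with strong convexity of $f$ at $w_{k+1}$ evaluated at $x_{k+1}$ gives $\|\mathsf{A}\|^2\leq\sigma_k^2\|\mathsf{B}\|^2$, so the parenthesis is non-positive; since $\tfrac{1}{\sigma_k}\geq\tfrac{1}{1+\lambda_k\mu}$, multiplying a non-positive number by the larger coefficient only makes it smaller, producing the cleaner relaxation $\tfrac{1}{2\lambda_k\sigma_k}\|\mathsf{A}\|^2-\tfrac{\sigma_k}{2\lambda_k}\|\mathsf{B}\|^2$, which is exactly the pair of terms appearing in~\eqref{update_distortion_eq_appendix}. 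Summing the four bounds, the coefficients of $\|\nabla_{k+1}\|^2$, namely $-\tfrac{a_{k+1}}{2\mu}$, $-\tfrac{A_k}{2\mu}$, $+\tfrac{A_{k+1}}{2\mu}$, telescope to zero.

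The final step is \Cref{simple}, applied with weights $p=\tfrac{\mu A_k}{2}$ and $q=\tfrac{\mu\theta_k a_{k+1}}{2}$ to the two quadratic terms centred at $\mathtt{Exp}_{w_{k+1}}^{-1}(x_k)$ and $\mathtt{Exp}_{w_{k+1}}^{-1}(z_k)$ that the previous paragraph produced. The convex combination $\tfrac{p}{p+q}\mathtt{Exp}_{w_{k+1}}^{-1}(x_k)+\tfrac{q}{p+q}\mathtt{Exp}_{w_{k+1}}^{-1}(z_k)$ is exactly $\mathtt{Exp}_{w_{k+1}}^{-1}(y_k')$ by definition~\eqref{def_yk_prime_appendix}, and the residual cross term of \Cref{simple} equals $\tfrac{pq}{p+q}d_{w_{k+1}}^2(x_k,z_k)=\tfrac{\mu\theta_k a_{k+1}A_k}{2(A_k+\theta_k a_{k+1})}d_{w_{k+1}}^2(x_k,z_k)$. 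Substituting yields~\eqref{update_distortion_eq_appendix} exactly. The main obstacle, relative to the Euclidean case, is conceptual rather than computational: in the Euclidean setting the algorithm takes $y_k$ itself to be a convex combination of $x_k$ and $z_k$ in the one and only vector space, so \Cref{simple} outputs $y_k$ and no auxiliary point is ever needed; here the interpolation forced by \Cref{simple} lives in $T_{w_{k+1}}\mathcal{M}$ while the algorithm's $y_k$ is defined through an interpolation in $T_{x_k}\mathcal{M}$, and these two interpolations disagree. The role of the present lemma is precisely to surface that disagreement as the auxiliary point $y_k'$, together with the penalty term it induces in~\eqref{update_distortion_eq_appendix}, leaving the quantitative control of $d_{w_{k+1}}(y_k,y_k')$ to \Cref{sec_with_addis}.
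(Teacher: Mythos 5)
Your proposal reproduces the paper's argument essentially step for step: the same four-way decomposition of $p_k-p_{k+1}$, the same invocation of \Cref{reference_distortion_appendix} for the distance bracket (with $(1-\theta_k)B_{k+1}=\tfrac{\mu}{2}a_{k+1}$ cancelling the third group), the same two strong-convexity bounds and the Riemannian iprox inequality applied in $T_{w_{k+1}}\mathcal{M}$, telescoping of the $\|\nabla_{k+1}\|^2$ coefficients, and the concluding application of \Cref{simple} to produce $y_k'$ and the $d_{w_{k+1}}^2(x_k,z_k)$ residual. The one place you are slightly more explicit than the paper is the passage from coefficients $\tfrac{1}{2\lambda_k(1+\lambda_k\mu)}$ to $\tfrac{1}{2\lambda_k\sigma_k}$ on $\|\mathsf{A}\|^2$ (and correspondingly on $\|\mathsf{B}\|^2$): you correctly note that the strong-convexity inequality at $x_{k+1}$ forces $\|\mathsf{A}\|^2\le\sigma_k^2\|\mathsf{B}\|^2$, so the non-positive quantity $\|\mathsf{A}\|^2-\sigma_k^2\|\mathsf{B}\|^2$ can be rescaled by $\tfrac{1+\lambda_k\mu}{\sigma_k}\ge 1$ without losing the inequality. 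This is exactly the implicit step the paper glosses over when it directly writes the $\sigma_k$-weighted terms, so your filling in that justification is a (minor) improvement in rigor, not a deviation.
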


\begin{proof}
Recall the the argument in \Cref{potential_dec} basically uses strong convexity and the definition of Euclidean \textit{iprox} to lower bound the potential decrease with a quadratic function, and the choice of parameters ensure that the quadratic is positive definite. In the Riemannian setting, since `vector' on a manifold is undefined, we need to work with vectors in a tangent space instead. In the following, we work in the tangent space $\mathcal{T}_{w_{k+1}}$. This choice is quite natural, since straightforwardly generalizing of the proof of \Cref{potential_dec} would involve exponential maps at $w_{k+1}$. In $\mathcal{T}_{w_{k+1}}$, our goal is to derive a quadratic function to lower bound $p_k-p_{k+1}$.

Strong convexity implies that
\begin{equation}
\notag
    f(x_k) \geq f(w_{k+1})+\frac{\mu}{2}\|\mathtt{Exp}_{w_{k+1}}^{-1}(x_k)+\mu^{-1}\nabla_{k+1}\|^2-\frac{1}{2\mu}\|\nabla_{k+1}\|^2
\end{equation}
and 
\begin{equation}
\notag
    f(x_{k+1}) \geq f(w_{k+1})+\frac{\mu}{2}\|\mu^{-1}v_{k+1}\|^2-\frac{1}{2\mu}\|\nabla_{k+1}\|^2,
\end{equation}
and the definition of Riemannian \textit{iprox} operator \Cref{Riemann_iprox} implies that
\begin{equation}
    \notag
    \begin{aligned}
    & \frac{\sigma_k^2}{2}\|\mathtt{Exp}_{w_{k+1}}^{-1}(x_{k+1})-\mathtt{Exp}_{w_{k+1}}^{-1}(y_k)\|^2 \geq \frac{1}{2}\|\mathtt{Exp}_{w_{k+1}}^{-1}(x_{k+1})-\mathtt{Exp}_{w_{k+1}}^{-1}(y_k)+\lambda_k v_{k+1}\|^2 \\
    &\quad +\lambda_k(1+\lambda_k\mu)\left( f(x_{k+1})-f(w_{k+1})+\frac{1}{2\mu}\left( \|\nabla_{k+1}\|^2-\|v_{k+1}\|^2\right)\right)
    \end{aligned}
\end{equation}
Combining the above inequalities, we have
\begin{subequations}\label{Riemann_combine}
    \begin{align}
    p_k-p_{k+1} 
    &= \left( B_k d_{w_k}^2(z_k,x^*)-B_{k+1} d_{w_{k+1}}^2(z_{k+1},x^*)+\frac{2}{\mu}(1-\theta_k)B_{k+1}(f(x^*)-f(w_{k+1})) \right) \label{Riemann_combine1}\\
    &\quad + A_k(f(x_k)-f(w_{k+1}))+A_{k+1}(f(w_{k+1})-f(x_{k+1})) \nonumber\\
    &\geq \frac{\mu A_k}{2}\left\| \mathtt{Exp}_{w_{k+1}}^{-1}(x_k)-\mathtt{Exp}_{w_{k+1}}^{-1}(x_{k+1})+\mu^{-1}v_{k+1}\right\|^2  \nonumber \\
    &\quad +\frac{\mu}{2}\theta_k a_{k+1}\left\| \mathtt{Exp}_{w_{k+1}}^{-1}(z_k)-\mathtt{Exp}_{w_{k+1}}^{-1}(x_{k+1})+\mu^{-1}v_{k+1}\right\|^2 \nonumber\\
    &\quad +\frac{A_{k+1}}{2\lambda_k\sigma_k}\left\| \mathtt{Exp}_{w_{k+1}}^{-1}(y_k)-\mathtt{Exp}_{w_{k+1}}^{-1}(x_{k+1})-\lambda_k v_{k+1} \right\|^2  \nonumber\\
    &\quad - \frac{\sigma_k A_{k+1}}{2\lambda_k}d_{w_{k+1}}^2(x_{k+1},y_k)-\frac{A_{k+1}}{2\mu}\|v_{k+1}\|^2 \nonumber
    \end{align}
\end{subequations}
where we use the condition $a_{k+1} = \frac{2}{\mu}(1-\theta_k)B_{k+1}$ in \eqref{Riemann_combine1}. Finally, \Cref{simple}  implies that
\begin{equation}
\notag
    \begin{aligned}
    &\quad \frac{\mu A_k}{2}\left\| \mathtt{Exp}_{w_{k+1}}^{-1}(x_k)-\mathtt{Exp}_{w_{k+1}}^{-1}(x_{k+1})+\mu^{-1}v_{k+1}\right\|^2 \\
    &\qquad +\frac{\mu}{2}\theta_k a_{k+1}\left\| \mathtt{Exp}_{w_{k+1}}^{-1}(z_k)-\mathtt{Exp}_{w_{k+1}}^{-1}(x_{k+1})+\mu^{-1}v_{k+1}\right\|^2 \\
    &= \frac{\mu}{2}\left( \theta_k a_{k+1}+A_k \right) \left\| \mathtt{Exp}_{w_{k+1}}^{-1}(y_k')-\mathtt{Exp}_{w_{k+1}}^{-1}(x_{k+1})+\mu^{-1}v_{k+1}\right\|^2 \\
    &\qquad + \frac{\mu\theta_{k}a_{k+1}A_k}{2(A_k+\theta_{k}a_{k+1})}d_{w_{k+1}}^2(x_k,z_k).
    \end{aligned}
\end{equation}
The conclusion follows. The final equation in the proof explains why $y_k'$ would appear in \Cref{update_distortion_eq_appendix}.
\end{proof}

\subsection{Convergence without the additional distortion}

In the Riemannian setting, it is not \textit{guaranteed} that $y_k$ is the same as $y_k'$, and this may give rise to the \textit{additional distortion}, as shown in \Cref{update_distortion}. However, recall that our definition of \textit{iprox} allows flexible choices of $x_{k+1}, w_{k+1}$ and $v_{k+1}$. We can see that in some special cases, we still have $y_k=y_k'$. The following proposition provides sufficient condition for this to hold. It can be easily derived from the definition of $y_k$ and $y_k'$.

\begin{proposition}
\label{no_addis_prop}
Suppose that $w_{k+1}$ lies on the geodesic connecting $x_k$ and $z_k$, then $y_k = y_k'$.
\end{proposition}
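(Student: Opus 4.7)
The plan is to verify the identity by parametrizing the geodesic and reducing both $y_k$ and $y_k'$ to the same point on it. Let $\gamma:[0,1]\to\mathcal{M}$ denote the (unique) geodesic with $\gamma(0)=x_k$ and $\gamma(1)=z_k$, so that $\mathtt{Exp}_{x_k}^{-1}(z_k)=\gamma'(0)$ and, by the geodesic equation, $\gamma(t) = \mathtt{Exp}_{x_k}(t\,\mathtt{Exp}_{x_k}^{-1}(z_k))$. With $t^{\ast} := \tfrac{\theta_k a_{k+1}}{A_k+\theta_k a_{k+1}}$, the definition of $y_k$ in Algorithm~\ref{AHPE_Riemann} reads $y_k = \gamma(t^{\ast})$.

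Now suppose $w_{k+1}=\gamma(s)$ for some $s\in[0,1]$, and let $u\in T_{w_{k+1}}\mathcal{M}$ be the unit tangent to $\gamma$ at $w_{k+1}$ (in the direction of increasing parameter). Because $\gamma$ is a geodesic, its restriction to a neighborhood of $w_{k+1}$ remains a geodesic, so the inverse exponentials at $w_{k+1}$ of points on $\gamma$ are just signed multiples of $u$. Explicitly, setting $\alpha := d(w_{k+1},x_k) = s\,d(x_k,z_k)$ and $\beta := d(w_{k+1},z_k)=(1-s)d(x_k,z_k)$, one has
\begin{equation*}
\mathtt{Exp}_{w_{k+1}}^{-1}(x_k)=-\alpha\,u, \qquad \mathtt{Exp}_{w_{k+1}}^{-1}(z_k)=\beta\,u.
\end{equation*}
The first key step is to plug these into the definition~\eqref{def_yk_prime_appendix} of $y_k'$. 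The convex combination inside the exponential map collapses to a scalar multiple of $u$:
\begin{equation*}
\tfrac{A_k}{A_k+\theta_k a_{k+1}}(-\alpha u)+\tfrac{\theta_k a_{k+1}}{A_k+\theta_k a_{k+1}}(\beta u) = \tfrac{-A_k\alpha+\theta_k a_{k+1}\beta}{A_k+\theta_k a_{k+1}}\,u.
\end{equation*}
Applying $\mathtt{Exp}_{w_{k+1}}$ therefore yields a point on the same geodesic $\gamma$, at signed arclength $(- A_k\alpha+\theta_k a_{k+1}\beta)/(A_k+\theta_k a_{k+1})$ from $w_{k+1}$.

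The second step is to check that this is exactly $\gamma(t^{\ast})$. The signed arclength from $w_{k+1}$ to $\gamma(t^{\ast})$ along $\gamma$ equals $(t^{\ast}-s)\,d(x_k,z_k)$. A direct computation, using $\alpha=s\,d(x_k,z_k)$ and $\beta=(1-s)\,d(x_k,z_k)$, gives
\begin{equation*}
\tfrac{-A_k\alpha+\theta_k a_{k+1}\beta}{A_k+\theta_k a_{k+1}}
= \Bigl(\tfrac{\theta_k a_{k+1}}{A_k+\theta_k a_{k+1}}-s\Bigr)d(x_k,z_k)
= (t^{\ast}-s)\,d(x_k,z_k),
\end{equation*}
so the two parametrizations match and $y_k'=\gamma(t^{\ast})=y_k$. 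The only subtlety worth flagging is the bookkeeping of signs when $w_{k+1}$ might be interpreted as lying on the extension of $\gamma$ beyond $x_k$ or $z_k$; but since the claim assumes $w_{k+1}$ lies on the geodesic segment connecting $x_k$ and $z_k$, one has $s\in[0,1]$ and the computation above is valid verbatim. No real obstacle arises: once the parametrization is set up, the identity is pure linear algebra in one tangent-space dimension.
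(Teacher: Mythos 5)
Your proof is correct and is exactly the computation the paper leaves implicit (the paper states the proposition "can be easily derived from the definition of $y_k$ and $y_k'$" without writing it out): writing $w_{k+1}=\gamma(s)$ on the geodesic, the inverse exponentials of $x_k$ and $z_k$ at $w_{k+1}$ become collinear multiples of the tangent, and the convex combination reduces to $\gamma(t^{\ast})=y_k$. No gaps; the sign bookkeeping and the use of uniqueness of geodesics on a Hadamard manifold are handled appropriately.
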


We now move on to theoretical analysis under the condition $y_k=y_k'$. The right hand side of \eqref{update_distortion_eq} is a quadratic function of $\mathtt{Exp}_{w_{k+1}}^{-1}(y_k)-\mathtt{Exp}_{w_{k+1}}^{-1}(x_{k+1})$ and $v_{k+1}$, after ignoring the non-negative $d_{w_{k+1}}(x_k,z_k)$ term. We can then prove the following lemma for potential decrease. The equation $A_{k+1} = (1+\mu\lambda_k)(\theta_k a_{k+1} + A_k)$ plays a crucial role in the proof.

\begin{lemma}
\label{rescale}
Suppose that $\sigma_k < 1$, then
\begin{equation}
\notag
    \begin{aligned}
    \frac{(1-\sigma_k)A_{k+1}}{2\lambda_k}d_{w_{k+1}}^2(x_{k+1},y_k)
    &\leq \frac{\mu}{2}(\theta_k a_{k+1}+A_k)\left\| \mathtt{Exp}_{w_{k+1}}^{-1}(y_k)-\mathtt{Exp}_{w_{k+1}}^{-1}(x_{k+1})+\mu^{-1}v_{k+1} \right\|^2 \\
    &\quad+\frac{A_{k+1}}{2\lambda_k}\left\| \mathtt{Exp}_{w_{k+1}}^{-1}(y_k)-\mathtt{Exp}_{w_{k+1}}^{-1}(x_{k+1})-\lambda_k v_{k+1} \right\|^2 \\
    &\quad- \frac{\sigma_k A_{k+1}}{2\lambda_k}d_{w_{k+1}}^2(x_{k+1},y_k) - \frac{A_{k+1}}{2\mu}\|v_{k+1}\|^2
    \end{aligned}
\end{equation}
\end{lemma}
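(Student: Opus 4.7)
The plan is to treat the inequality as a scalar quadratic form in two tangent vectors living in $T_{w_{k+1}}\mathcal{M}$ and then verify that it is a sum of squares. Introduce the abbreviations $u := \mathtt{Exp}_{w_{k+1}}^{-1}(y_k) - \mathtt{Exp}_{w_{k+1}}^{-1}(x_{k+1})$ and $v := v_{k+1}$, so that $\|u\|^2 = d_{w_{k+1}}^2(x_{k+1},y_k)$ and every norm in the statement turns into a polynomial in $\|u\|^2$, $\langle u,v\rangle$, and $\|v\|^2$. A preliminary simplification is to notice that the $\sigma_k$-dependent pieces cancel: moving the left-hand side across, the two coefficients $(1-\sigma_k)+\sigma_k=1$ combine the $\|u\|^2$ terms into $\tfrac{A_{k+1}}{2\lambda_k}\|u\|^2$, so $\sigma_k$ disappears from the inequality (the assumption $\sigma_k<1$ only ensures positivity of the LHS coefficient).

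First I would rewrite the remaining inequality as
\begin{equation*}
\tfrac{\mu}{2}(\theta_k a_{k+1}+A_k)\,\|u+\mu^{-1}v\|^2 + \tfrac{A_{k+1}}{2\lambda_k}\,\|u-\lambda_k v\|^2 \;\geq\; \tfrac{A_{k+1}}{2\lambda_k}\|u\|^2 + \tfrac{A_{k+1}}{2\mu}\|v\|^2,
\end{equation*}
expand both squares on the left, and collect coefficients of $\|u\|^2$, $\langle u,v\rangle$, and $\|v\|^2$. The main step is then to invoke \Cref{increasing_A}: setting $S := \theta_k a_{k+1}+A_k$, one has $A_{k+1}=(1+\mu\lambda_k)S$, hence $(1-\theta_k)a_{k+1}=A_{k+1}-S=\mu\lambda_k S$. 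Substituting this identity, the $\|u\|^2$ coefficient collapses to $\tfrac{\mu S}{2}$ (the two $\tfrac{A_{k+1}}{2\lambda_k}\|u\|^2$ contributions cancel), the cross-term coefficient to $S - A_{k+1} = -\mu\lambda_k S$, and the $\|v\|^2$ coefficient, after using $\lambda_k A_{k+1}=\lambda_k S+\mu\lambda_k^2 S$, to $\tfrac{\mu\lambda_k^2 S}{2}$.

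These three numbers are precisely the coefficients of $\tfrac{\mu S}{2}\,\|u-\lambda_k v\|^2$, which is manifestly nonnegative; this yields the claim. I do not expect any real obstacle: the argument is purely algebraic once \Cref{increasing_A} is available, and takes place inside the single tangent space $T_{w_{k+1}}\mathcal{M}$, so no curvature or parallel-transport issue is triggered. The only point worth stressing is that the update rule for $a_{k+1}$ (through $\theta_k$ and $\delta_k$) was engineered precisely so that the identity $A_{k+1}-S=\mu\lambda_k S$ holds and the cross terms line up into a perfect square, mirroring the corresponding quadratic-form computation in the Euclidean case (\Cref{potential_dec}).
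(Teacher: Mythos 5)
Your proof is correct and takes essentially the same route as the paper: treat the claimed inequality as positive semi-definiteness of a scalar quadratic form in $u$ and $v$ inside $T_{w_{k+1}}\mathcal{M}$ and verify it algebraically via the identity $A_{k+1}=(1+\mu\lambda_k)(\theta_k a_{k+1}+A_k)$ from \Cref{increasing_A}. Your preliminary observation that the $\sigma_k$-dependent terms cancel once the left side is moved over is a clean streamlining that exhibits the remainder directly as the perfect square $\tfrac{\mu}{2}(\theta_k a_{k+1}+A_k)\bigl\|u-\lambda_k v\bigr\|^2$, whereas the paper keeps the $\sigma_k$ terms and instead checks the discriminant condition $\beta^2=\bigl(\alpha-\tfrac{(1-\sigma_k)A_{k+1}}{2\lambda_k}\bigr)\gamma$; the two are the same computation.
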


\begin{proof}

First note that the difference of the right hand side and left hand side of the inequality can be written in the following form (where we omit the $d_{w_{k+1}}^2(x_k,z_k)$ term, which is non-negative):
\begin{equation}
    \notag
    \mathtt{RHS}-\mathtt{LHS} = \alpha d_{w_{k+1}}^2(x_{k+1},y_k) + 2\beta \left\langle \mathtt{Exp}_{w_{k+1}}^{-1}(y_k)-\mathtt{Exp}_{w_{k+1}}^{-1}(x_{k+1}), v_{k+1} \right\rangle + \gamma \|v_{k+1}\|^2
\end{equation}
where
\begin{equation}
\notag
    \begin{aligned}
    \alpha &= \frac{\mu}{2}(\theta_k a_{k+1}+A_k)+\frac{(1-\sigma_k)A_{k+1}}{2\lambda_k} = \frac{A_{k+1}}{2}\left( \frac{\mu}{1+\mu\lambda_k}+\frac{1-\sigma_k}{\lambda_k}\right) \\
    \beta &= \frac{1}{2}(\theta_k a_{k+1}+A_k)-\frac{A_{k+1}}{2} = - \frac{A_{k+1}}{2}\cdot\frac{\mu\lambda_k}{1+\mu\lambda_k} \\
    \gamma &= \frac{1}{2\mu}(\theta_k a_{k+1}+A_k)+\frac{\lambda_k A_{k+1}}{2}-\frac{A_{k+1}}{2\mu} = \frac{A_{k+1}}{2}\cdot\frac{\mu\lambda_k^2}{1+\mu\lambda_k}.
    \end{aligned}
\end{equation}
Note that
\begin{equation}
    \notag
    \beta^2 = \left( \alpha - \frac{(1-\sigma_k)A_{k+1}}{2\lambda_k}\right) \gamma,
\end{equation}
we can thus obtain
\begin{equation}
    \notag
    \mathtt{RHS}-\mathtt{LHS} \geq \frac{(1-\sigma_k)A_{k+1}}{2\lambda_k} d_{w_{k+1}}^2(x_{k+1},y_k)
\end{equation}
as desired.
\end{proof}

Combining \Cref{reference_distortion} and \Cref{rescale}, we can see that the potential sequence $\{p_k\}$ is non-increasing:
\begin{corollary}
\label{cor_simple}
 Suppose that $y_k=y_k'$, then the following inequality holds:
 \begin{equation}
     \notag
     p_k-p_{k+1} \geq \frac{(1-\sigma_k)A_{k+1}}{2\lambda_k} d_{w_{k+1}}^2(x_{k+1},y_k) + \frac{\mu\theta_{k}a_{k+1}A_k}{2(A_k+\theta_{k}a_{k+1})}d_{w_{k+1}}^2(x_k,z_k).
\end{equation}
In particular, we have $p_{k+1} \leq p_k$, so that $p_k \leq p_0$ for all $k \geq 1$.
\end{corollary}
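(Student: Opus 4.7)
The plan is to combine the two lemmas just proved. First I would apply Lemma~\ref{update_distortion_appendix} (i.e.~\Cref{update_distortion}) which gives the full lower bound on $p_k - p_{k+1}$. Under the hypothesis $y_k = y_k'$ (which by \Cref{no_addis_prop} is enforced by choosing $w_{k+1}$ on the geodesic between $x_k$ and $z_k$), the first term on the right hand side of \eqref{update_distortion_eq_appendix} simplifies to
\[
\tfrac{\mu}{2}(\theta_k a_{k+1}+A_k)\bigl\| \mathtt{Exp}_{w_{k+1}}^{-1}(y_k)-\mathtt{Exp}_{w_{k+1}}^{-1}(x_{k+1})+\mu^{-1}v_{k+1} \bigr\|^2,
\]
so the lower bound on $p_k-p_{k+1}$ becomes a sum of three quadratic-in-$v_{k+1}$ expressions, a positive $\tfrac{\mu\theta_k a_{k+1} A_k}{2(A_k+\theta_k a_{k+1})} d_{w_{k+1}}^2(x_k,z_k)$ term, and two negative terms involving $d_{w_{k+1}}^2(x_{k+1},y_k)$ and $\|v_{k+1}\|^2$.

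Next I would invoke Lemma~\ref{rescale} directly: its statement is precisely the inequality needed to show that the three quadratic terms plus the two negative terms dominate $\tfrac{(1-\sigma_k)A_{k+1}}{2\lambda_k} d_{w_{k+1}}^2(x_{k+1},y_k)$. In other words, the left-hand side of \Cref{rescale} is exactly what we want to appear in the final bound, while its right-hand side is exactly the collection of terms we have left over in \eqref{update_distortion_eq_appendix} after setting $y_k=y_k'$ (dropping only the non-negative $d_{w_{k+1}}^2(x_k,z_k)$ term, which we keep separately). Substituting yields
\[
p_k-p_{k+1} \;\geq\; \tfrac{(1-\sigma_k)A_{k+1}}{2\lambda_k} d_{w_{k+1}}^2(x_{k+1},y_k) + \tfrac{\mu\theta_k a_{k+1}A_k}{2(A_k+\theta_k a_{k+1})} d_{w_{k+1}}^2(x_k,z_k),
\]
which is the claimed inequality.

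For the ``in particular'' part, both terms on the right are manifestly non-negative (since $\sigma_k \in (0,1)$, $\lambda_k>0$, $\mu>0$, and $\theta_k,a_{k+1},A_k\geq 0$ by construction of \Cref{AHPE_Riemann}), so $p_{k+1}\leq p_k$, and a trivial induction on $k$ gives $p_k\leq p_0$. There is no real obstacle here: the substance of the work has already been done in Lemmas~\ref{update_distortion_appendix} and~\ref{rescale}; the corollary is just their concatenation. If anything, the only point requiring slight care is making sure the hypothesis $a_{k+1}=\tfrac{2}{\mu}(1-\theta_k)B_{k+1}$ assumed in Lemma~\ref{update_distortion_appendix} is satisfied, but this follows directly from Line~5 of \Cref{AHPE_Riemann}.
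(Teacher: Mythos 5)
Your proposal is correct and matches the paper's own (one-line) justification, which likewise just concatenates \Cref{update_distortion_appendix} and \Cref{rescale} under $y_k=y_k'$ and drops the non-negative $d_{w_{k+1}}^2(x_k,z_k)$ term only to re-add it. The single point worth stating explicitly is that the second term in \eqref{update_distortion_eq_appendix} carries the coefficient $\frac{A_{k+1}}{2\lambda_k\sigma_k}$ while \Cref{rescale} uses $\frac{A_{k+1}}{2\lambda_k}$; since $\sigma_k\in(0,1)$ the former dominates the latter, so the substitution goes through in the right direction.
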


Finally, we can prove the following theorem, which says that if $w_{k+1}$ is chosen on the geodesic connecting $x_k$ and $z_k$, then \Cref{AHPE_Riemann} provably achieves eventual acceleration with arbitrary initialization.

\begin{theorem}[restatement of \Cref{simple_convergence_thm}]
\label{simple_convergence_thm_appendix}
Suppose that in \Cref{AHPE_Riemann}, we choose $\lambda_k = \lambda$ and $w_{k+1}$ lies on the geodesic connecting $x_k$ and $z_k$ s.t. $d(w_{k+1},y_k) = \mathcal{O}(1)$, then we have
\begin{equation}
    \label{convergence_ineq_appendix}
    \begin{aligned}
    f(x_k)-f(x^*) &\leq \frac{p_0}{A_k},\quad
    d_{w_{k+1}}^2(z_k,x^*) &\leq \frac{p_0}{B_k} \leq \frac{2 p_0}{\mu a_k}.
    \end{aligned}
\end{equation}
Moreover, we have
\begin{equation}
    \notag
    \lim_{k\to +\infty}\frac{A_{k+1}}{A_k} = 1+\mu\lambda + \sqrt{\mu\lambda(1+\mu\lambda)}
\end{equation}
\end{theorem}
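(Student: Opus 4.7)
The plan is to reduce the theorem to a careful analysis of a scalar recursion in $\xi_k := a_k/A_k$. The two inequalities in (\ref{convergence_ineq_appendix}) are essentially immediate from \Cref{cor_simple}: potential decrease gives $p_k \leq p_0$, and since both summands of $p_k$ are non-negative we obtain $A_k(f(x_k)-f(x^*)) \leq p_0$ and $B_k d_{w_k}^2(z_k,x^*) \leq p_0$, while the bound $B_k \geq \mu a_k/2$, which follows from $a_k = 2\mu^{-1}(1-\theta_{k-1})B_k$ with $\theta_{k-1} \in (0,1)$, converts the distance inequality into the stated form. Hence the substantive content is the limit $\lim_k A_{k+1}/A_k = 1/(1-\sqrt{\rho})$ with $\rho := \mu\lambda/(1+\mu\lambda)$, and it suffices to show $\xi_k \to \sqrt{\rho}$. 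Combining \Cref{increasing_A} (which gives $A_{k+1}/A_k = 1/(1-\xi_{k+1})$ and $1-\theta_k = \rho/\xi_{k+1}$) with $B_{k+1}=B_k/(\theta_k\delta_k)$ and the identity $B_k/A_k = \tfrac{1+\mu\lambda}{2\lambda}\xi_k^2$ yields the recursive equation (\ref{recursive_xi_main}), $\delta_k\xi_{k+1}(\xi_{k+1}-\rho) = \xi_k^2(1-\xi_{k+1})$, whose unique interior fixed point at $\delta_k \equiv 1$ is $\xi^* = \sqrt{\rho}$.

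The next step is to show that the perturbation $|\delta_k - 1|$ decays geometrically. A key observation from the recursion itself is that $\xi_{k+1},1-\xi_{k+1} \in (0,1)$ and $\xi_k > 0$ force $\xi_{k+1} > \rho$, so $\xi_k \geq \rho$ for all $k\geq 1$ and consequently $A_{k+1}/A_k \geq 1+\mu\lambda$. This ensures that both $A_k$ and $B_k$ grow at least geometrically; combined with $p_k \leq p_0$ this gives $d_{w_k}(z_k,x^*) \to 0$ linearly. Since $\mathcal{M}$ has non-positive curvature, inverse exponential maps are expansive and hence $d(z_k,x^*) \leq d_{w_k}(z_k,x^*) \to 0$; by strong convexity, $d(x_k,x^*) \to 0$ as well. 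Because $w_{k+1}$ sits on the geodesic from $x_k$ to $z_k$, convexity of the squared Riemannian distance to $x^*$ gives $d(w_{k+1},x^*) \leq \max\{d(x_k,x^*),d(z_k,x^*)\}$, and together with the boundedness assumption $d(w_{k+1},y_k) = \mathcal{O}(1)$ this forces $d(w_k,z_k) \to 0$ linearly. \Cref{ahn_distortion} then yields $\delta_k = 1 + \mathcal{O}(K\, d^2(w_k,z_k))$, so $\delta_k \to 1$ at a geometric rate.

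I would close the argument with a local perturbation analysis of (\ref{recursive_xi_main}) around $\xi^* = \sqrt{\rho}$. Implicit differentiation of the recursion at $(\xi_k,\xi_{k+1},\delta_k) = (\xi^*,\xi^*,1)$ shows the linearized map $\xi_k \mapsto \xi_{k+1}$ has slope $1-\xi^* \in (0,1)$, so it is a strict contraction; a standard perturbation-of-contractions argument combining this contraction rate with the geometric decay of $|\delta_k - 1|$ then yields $\xi_k \to \xi^*$, hence $A_{k+1}/A_k \to 1/(1-\sqrt{\rho}) = 1+\mu\lambda+\sqrt{\mu\lambda(1+\mu\lambda)}$. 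The main obstacle is the circularity in the bootstrap: the rate at which $\delta_k \to 1$ depends on how fast $A_k$ and $B_k$ grow, which itself depends on the very quantity $\xi_k$ we are trying to control. The crude lower bound $\xi_k \geq \rho$ obtained from the recursion alone is what breaks this circularity, providing an initial geometric growth rate for $A_k$ and $B_k$ that then drives $\delta_k \to 1$ and, through the contraction analysis, the sharper limit $\xi_k \to \sqrt{\rho}$.
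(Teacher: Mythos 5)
Your overall strategy—derive the scalar recursion $\delta_k\xi_{k+1}(\xi_{k+1}-\rho)=\xi_k^2(1-\xi_{k+1})$, establish $\delta_k\to1$ geometrically via boundedness of the iterates, then run a contraction/perturbation argument around the fixed point $\sqrt{\rho}$—is essentially the paper's approach, and your scheme for breaking the bootstrap circularity via the crude bound $\xi_{k+1}>\rho$ is also what the paper does. However, there is a genuine error in the step where you claim ``inverse exponential maps are expansive and hence $d(z_k,x^*)\leq d_{w_k}(z_k,x^*)$.'' The inequality is reversed: on a Hadamard manifold it is the \emph{exponential} map that is expansive, so $d_w(x,y)\leq d(x,y)$ (this is stated explicitly in Section 3.1 of the paper). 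Consequently $d_{w_k}(z_k,x^*)\to0$ does \emph{not}, by itself, yield $d(z_k,x^*)\to0$; the tangent-space distance is the smaller one, and the conversion requires \Cref{ahn_distortion}, whose distortion factor $T_K(d(w_k,z_k))$ you cannot bound without already knowing that $\{w_k\}$ stays in a bounded region. As written, your chain is silently circular.

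The paper breaks this circularity by a different route: from \Cref{cor_simple} the term $\frac{(1-\sigma)A_{k+1}}{2\lambda}d_{w_{k+1}}^2(x_{k+1},y_k)$ is summable and $A_{k+1}\to\infty$, so $d_{w_{k+1}}(x_{k+1},y_k)\to0$; combined with the hypothesis $d(w_{k+1},y_k)=\mathcal O(1)$ and the elementary bound $d(x_{k+1},y_k)\leq d_{w_{k+1}}(x_{k+1},y_k)+2d(w_{k+1},y_k)$ (plus $d(x_{k+1},x^*)\to0$ from strong convexity), one deduces that $\{y_k\}$ and hence $\{w_k\}$ are bounded. Only then does \Cref{ahn_distortion} apply with a uniformly bounded distortion factor, giving $d(z_k,x^*)\to0$, $d(w_{k+1},x^*)\to0$ (via convexity of $d^2(\cdot,x^*)$ along the geodesic through $x_k,z_k$), and hence $\delta_k\to1$. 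You should replace your incorrect one-line deduction with this detour through boundedness of $\{y_k\},\{w_k\}$; the remainder of your proposal (the $\xi_{k+1}>\rho$ observation, the linearization yielding slope $1-\xi^*$, and the perturbation-of-contractions closing argument) is sound and matches the paper, which in fact proves the stronger global bound $\varphi'(x)\leq 1/\sqrt2$ by squeezing with explicit reference sequences.
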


\begin{proof}
The first two inequalities follow from  \Cref{cor_simple} and
\begin{equation}
    \label{eventual_acc}
    a_{k+1} = (1-\theta_k) \frac{B_k}{\delta_k\theta_k} = 2\mu^{-1}(1-\theta_k) B_{k+1} < 2\mu^{-1} B_{k+1}, \quad \forall k \geq 0.
\end{equation}
We now prove \eqref{eventual_acc}. This is equivalent to 
\begin{equation}
    \notag
    \lim_{k\to +\infty} \frac{a_k}{A_k} = \sqrt{\frac{\mu\lambda}{1+\mu\lambda}}.
\end{equation}
Define $\xi_k = \frac{a_k}{A_k}$ for $k\geq 1$. Note that the update of $\theta_{k}$ and $a_{k+1}$ in Algorithm \ref{AHPE_Riemann} implies that
\begin{equation}
    \notag
    \delta_k a_{k+1}^2 - 2\lambda \left( \frac{\mu}{2}\delta_k A_k+B_k \right) a_{k+1}-2\lambda A_k B_k = 0
\end{equation}
Thus
\begin{equation}
    \begin{aligned}
    \delta_k a_{k+1}^2 &= 2\lambda \left( B_k A_{k+1}+\frac{\mu}{2}\delta_k A_k a_{k+1} \right) \\
    (1+\mu\lambda) a_{k+1}^2 &= 2\delta_k^{-1}\lambda A_{k+1}\left( B_k+\frac{\mu}{2}\delta_k a_{k+1} \right) = 2\lambda A_{k+1} B_{k+1}
    \end{aligned}
\end{equation}
As a result, we have $\frac{B_k}{A_k} = \frac{1+\mu\lambda}{2\lambda}\xi_k^2$. The above derivations only holds for $k\geq 1$, we artificially define $\xi_0 = \sqrt{\frac{2\lambda}{1+\mu\lambda}\frac{B_k}{A_k}}$, so that for all $k\geq 0$, rewrite the equation $B_{k+1} = \frac{B_k}{\delta_k} + \frac{\mu}{2}a_{k+1}$ in terms of $\xi$ as
\begin{equation}
    \notag
    \delta_k\frac{1+\mu\lambda}{2\lambda}\xi_{k+1}^2 = \frac{1+\mu\lambda}{2\lambda}\xi_k^2(1-\xi_{k+1})+\frac{\mu}{2}\delta_{k+1}\xi_{k+1}
\end{equation}
or equivalently,
\begin{equation}
    \label{recursive_xi}
    \delta_k\xi_{k+1}^2 = \xi_k^2(1-\xi_{k+1}) + \frac{\mu\lambda}{1+\mu\lambda}\delta_k\xi_{k+1}
\end{equation}
Before proceeding to analyze the recursive equation \eqref{recursive_xi}, we first prove that $\lim_{k\to+\infty}\delta_k = 1$. This is in fact necessary since otherwise $\{\xi_k\}$ would not converge to the fixed point $\sqrt{\frac{\mu\lambda}{1+\mu\lambda}}$.

Since $\lim_{k\to +\infty} A_k = +\infty$, we have $x_k \to x^*$ and $d_{w_{k+1}}(x_{k+1},y_k) \to 0$, by \Cref{cor_simple}. By assumption, $d(w_{k+1},y_k)$ is bounded, so that
\begin{equation}
    \notag
    d(x_{k+1},y_k) \leq d_{w_{k+1}}(x_{k+1},y_k) + 2d(w_{k+1},y_k)
\end{equation}
is bounded, which implies that the sequence $\{y_k\}$ is bounded. Thus $\{w_k\}$ is also bounded.

Since $A_{k+1} \geq (1+2\mu\lambda) A_k$, we have $a_{k+1} = A_{k+1}-A_k \geq 2\mu\lambda A_k$, so that $\lim_{k\to +\infty} a_k = +\infty$ and $d_{w_{k+1}}^2(z_k,x^*) \leq \frac{p_0}{a_k} \to 0$. Since $w_{k+1} = \mathcal{O}(1)$, the distortion inequality \Cref{ahn_distortion} implies that $d(z_k,x^*) \to 0$. Note that $w_{k+1}$ lies on the geodesic connecting $x_k$ and $z_k$, and $\mathcal{M}$ has non-positive curvature, we have
\begin{equation}
    \notag
    d(w_{k+1},x^*) \leq \max\{ d(x_k,x^*), d(z_k,x^*) \} \to 0.
\end{equation}
Hence $\delta_k = T_{K}(d(w_k,z_k)) \to 1$ as $k \to +\infty$.

We now return to \eqref{recursive_xi}. We first show that for any $\varepsilon>0$, we have
\begin{equation}
    \notag
    \liminf_{k\to +\infty} \xi_k \geq (1-\varepsilon) \sqrt{\frac{\mu\lambda}{1+\mu\lambda}}
\end{equation}
Since $d(w_{k+1},y_k) \leq D_k \to 0$ by assumption, and $y_k \to x^*$, we have $w_{k+1} \to x^*$. The definition of $\delta_k$ then implies that $\lim_{k\to +\infty} \delta_k = 1$.

The recursive relation \eqref{recursive_xi} can be rewritten as
\begin{equation}
    \notag
    \delta_k\xi_{k+1}\left( \xi_{k+1}-\frac{\mu\lambda}{1+\mu\lambda} \right) = \xi_k^2(1-\xi_{k+1})
\end{equation}
Note that: if $\delta_k$ becomes larger  and $\xi_k$ becomes smaller, then $\xi_{k+1}$ also becomes smaller. Based on this observation, we first choose $k_0$ such that $\delta_k \leq 1+\varepsilon \sqrt{\frac{\mu\lambda}{1+\mu\lambda}}$ for all $k \geq k_0$, and then construct a \textit{reference sequence} $\{\zeta_k\}_{k\geq k_0}$ defined as
\begin{equation}
\notag
    \zeta_{k_0} = \xi_{k_0},\quad \delta\zeta_{k+1}\left( \zeta_{k+1}-\frac{\mu\lambda}{1+\mu\lambda} \right) = \zeta_k^2(1-\zeta_{k+1}),\quad \delta = 1+\varepsilon \sqrt{\frac{\mu\lambda}{1+\mu\lambda}}
\end{equation}
Then we have $\xi_k \geq \zeta_k$ for all $k \geq k_0$. Alternatively, we can write the recursion above as $\zeta_{k+1} = \varphi(\zeta_k)$, where
\begin{equation}
    \label{fix_point}
    \varphi(x) = \frac{1}{2\delta}\left( \frac{\mu\lambda}{1+\mu\lambda}\delta-x^2+\sqrt{\left( x^2 -\frac{\mu\lambda}{1+\mu\lambda}\delta\right)^2+4\delta x^2} \right)
\end{equation}
We have
\begin{equation}
    \notag
    \varphi'(x) = -\frac{x}{\delta}+\frac{x\left( x^2 -\frac{\mu\lambda}{1+\mu\lambda}\delta\right)+2\delta x}{\delta\sqrt{\left( x^2 -\frac{\mu\lambda}{1+\mu\lambda}\delta\right)^2+4\delta x^2}}
\end{equation}
The observation made above implies that $\varphi'(x) \geq 0$. On the other hand,
\begin{equation}
    \begin{aligned}
    \varphi'(x) < 1 &\Leftrightarrow \left( x\left( x^2 -\frac{\mu\lambda}{1+\mu\lambda}\delta\right)+2\delta x \right)^2 < (x+\delta)^2 \left( \left( x^2 -\frac{\mu\lambda}{1+\mu\lambda}\delta\right)^2+4\delta x^2 \right) \\
    &\Leftarrow 4\delta x^2\left( x^2 -\frac{\mu\lambda}{1+\mu\lambda}\delta\right)^2 + 4\delta^2 x^2 < (x+\delta)^2 \cdot 4\delta x^2
    \end{aligned}
\end{equation}
which trivially holds, since $\delta > 1$. Since $\varphi$ is continuously differentiable, we have $\sup_{x \in [0,1]} \varphi'(x) < 1$ i.e. $\varphi$ is a contraction mapping. Since $\zeta_k \in [0,1]$, $\forall k\geq k_0$, it converges exponentially fast to a fixed point of $\varphi$, which is the positive root of the equation $x^2+(\delta-1)x-\frac{\mu\lambda}{1+\mu\lambda}\delta=0$. It's easy to check that this root is larger than $(1-\varepsilon) \sqrt{\frac{\mu\lambda}{1+\mu\lambda}}$, so that
\begin{equation}
\notag
    \liminf_{k\to +\infty} \xi_k \geq \liminf_{k\to +\infty} \zeta_k \geq (1-\varepsilon) \sqrt{\frac{\mu\lambda}{1+\mu\lambda}}
\end{equation}
To prove the desired result, it remains show that
\begin{equation}
    \notag
    \limsup_{k\to +\infty} \xi_k \leq \sqrt{\frac{\mu\lambda}{1+\mu\lambda}}
\end{equation}
This can be similarly shown by constructing a reference sequence with $\delta = 1$ in the recursion, and the reference sequence converges to the fixed point corresponding to $\delta=1$, which is $\sqrt{\frac{\mu\lambda}{1+\mu\lambda}}$.
\end{proof}

Although \Cref{simple_convergence_thm} shows that \Cref{AHPE_Riemann} eventually achieves acceleration in the sense of \Cref{def_eventual_acc}, it might also be helpful to know how fast the sequence $\{\tau_k\}$ (cf. \Cref{def_eventual_acc}) achieves the order of $\mathcal{O}\left(\sqrt{\frac{\mu}{L}}\right)$ i.e. how long the `burn-in' period takes to achieve full acceleration. Since at this point we are focusing on acceleration for smooth strongly-convex functions, in the following we always assume that $f$ is $L$-smooth.

\begin{lemma}
Suppose that $d(w_{k+1},y_k) = \mathcal{O}(1)$, and $\lambda_k=\lambda = \frac{c}{L}$, where $c \in (0,1)$ is a numerical constant, then
\begin{equation}
    \notag
    \delta_k - 1 \leq C_0 \left( 1+c \frac{\mu}{L}\right)^{-k}
\end{equation}
where $C_0$ is a constant that may depend on $L,\mu$ and initialization, but independent of $k$.
\end{lemma}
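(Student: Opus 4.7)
The plan is to derive a geometric decay of $d(w_k,z_k)$ and then use the local expansion $T_K(r)-1=O(Kr^2)$ as $r\to 0$. First I would exploit Corollary \ref{cor_simple}, which yields $p_k\leq p_0$, together with Lemma \ref{increasing_A}, whose identity $A_{k+1}=(1+\mu\lambda)(\theta_k a_{k+1}+A_k)$ implies $A_{k+1}\geq(1+\mu\lambda)A_k$ and hence $A_k\geq A_0(1+c\mu/L)^k$. Strong convexity together with $A_k(f(x_k)-f(x^*))\leq p_0$ then gives $d^2(x_k,x^*)\leq 2p_0/(\mu A_k)=O((1+c\mu/L)^{-k})$. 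Next I would lower-bound $B_k$: the identity $B_k/A_k=(1+\mu\lambda)\xi_k^2/(2\lambda)$ derived in the proof of Theorem \ref{simple_convergence_thm_appendix}, combined with $\xi_k=1-A_{k-1}/A_k\geq\mu\lambda/(1+\mu\lambda)$ (which follows directly from $A_k\geq(1+\mu\lambda)A_{k-1}$), yields $B_k\geq\mu^2\lambda A_k/(2(1+\mu\lambda))$. Hence $d_{w_k}^2(z_k,x^*)\leq p_0/B_k=O((1+c\mu/L)^{-k})$.

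Next I would convert the tangent-space bound into a Riemannian distance bound. As shown in the proof of Theorem \ref{simple_convergence_thm_appendix}, the hypothesis $d(w_{k+1},y_k)=O(1)$ implies that $\{w_k\}$ is bounded, so $d(w_k,z_k)\leq d_{w_k}(z_k,x^*)+d(w_k,x^*)$ is uniformly bounded as well. Consequently $M:=\sup_k T_K(d(w_k,z_k))<\infty$, and Lemma \ref{ahn_distortion} gives the crude bound $d^2(z_k,x^*)\leq M\,d_{w_k}^2(z_k,x^*)=O((1+c\mu/L)^{-k})$. Since $w_k$ lies on the geodesic connecting $x_{k-1}$ and $z_{k-1}$ (the standing hypothesis of Theorem \ref{simple_convergence_thm_appendix}), convexity of $y\mapsto d(y,x^*)$ on a Hadamard manifold gives $d(w_k,x^*)\leq\max\{d(x_{k-1},x^*),d(z_{k-1},x^*)\}=O((1+c\mu/L)^{-(k-1)/2})$. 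The triangle inequality then yields $d(w_k,z_k)\leq d(w_k,x^*)+d(z_k,x^*)=O((1+c\mu/L)^{-(k-1)/2})$.

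Finally, a direct Taylor expansion of both branches appearing in the definition of $T_K$ around $r=0$ shows $T_K(r)-1=O(Kr^2)$ as $r\to 0$. Applying this with $r=d(w_k,z_k)$, which now tends to zero, gives $\delta_k-1=O(K\,d^2(w_k,z_k))=O(K(1+c\mu/L)^{-(k-1)})$, and absorbing $K$ and the extra factor $(1+c\mu/L)$ into a constant $C_0$ (permitted to depend on $L$, $\mu$, $K$ and the initialization) yields the desired $\delta_k-1\leq C_0(1+c\mu/L)^{-k}$. The main obstacle is the circular dependence between $d(z_k,x^*)$ and $T_K(d(w_k,z_k))$: Lemma \ref{ahn_distortion} controls the former via the latter, but $T_K(r)$ is only close to $1$ once $r$ is small. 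I sidestep this by first using only the uniform boundedness of $d(w_k,z_k)$ to extract a geometric decay of $d_{w_k}^2(z_k,x^*)$, and only afterwards sharpening the bound on $d(w_k,z_k)$ itself through the Hadamard-convexity argument for $w_k$.
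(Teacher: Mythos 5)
Your proposal is correct and follows essentially the same route as the paper's proof. Both proofs proceed by: (i) using $A_{k+1}\geq(1+\mu\lambda)A_k$ from Lemma \ref{increasing_A} together with $p_k\leq p_0$ to obtain geometric decay of $d^2(x_k,x^*)$ and of the tangent-space distance $d_{w_k}^2(z_k,x^*)$; (ii) invoking boundedness of $\{w_k\}$ (established in the proof of Theorem \ref{simple_convergence_thm_appendix}) plus Lemma \ref{ahn_distortion} as a crude $O(1)$ distortion factor to upgrade this to geometric decay of $d^2(z_k,x^*)$; (iii) using that $w_k$ lies on the geodesic joining $x_{k-1}$ and $z_{k-1}$ together with geodesic convexity of the distance function on a Hadamard manifold to bound $d(w_k,x^*)$; and (iv) the local expansion $T_K(r)=1+\mathcal{O}(Kr^2)$ to finish. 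The only cosmetic difference is that you pass through an explicit lower bound on $B_k$ via $\xi_k\geq\mu\lambda/(1+\mu\lambda)$, whereas the paper bounds $d_{w_k}^2(z_k,x^*)\leq 2p_0/(\mu a_k)$ directly; these are equivalent. You are also right to flag that $C_0$ necessarily depends on $K$ through the Taylor coefficient of $T_K$, a dependence the lemma statement elides (as does the paper's proof, which hides it in the $\mathcal{O}(\cdot)$), and right to note that the statement implicitly inherits the standing hypothesis of Theorem \ref{simple_convergence_thm_appendix} that $w_{k+1}$ lies on the geodesic between $x_k$ and $z_k$, since both your proof and the paper's use it for step (iii).
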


\begin{proof}
Let $D$ be a uniform upper bound of $d(w_{k+1},y_k)$. Since $a_k \geq c \frac{\mu}{L}A_k \geq c \frac{\mu}{L} \left( 1+c \frac{\mu}{L}\right)^k A_0$, we have
\begin{equation}
\notag
    d_{w_{k+1}}^2(z_k,x^*) \leq \frac{2L}{\mu^2 A_0} \left( 1+c \frac{\mu}{L}\right)^{-k}p_0
\end{equation}
Recall that in the proof of \Cref{simple_convergence_thm} we have shown that $\{w_k\}$ is bounded, and it's easy to see that the upper bound only depends on initialization and $d(w_{k+1},y_k)$, by \Cref{ahn_distortion} we have
\begin{equation}
    \notag
    d^2(z_k,x^*) \leq \frac{2C_1 L}{c \mu^2 A_0} \left( 1+c \frac{\mu}{L}\right)^{-k}p_0
\end{equation}
for some $C_1 \geq 1$ that only depends on initialization and $D$. Since $w_{k+1}$ lies on the geodesic between $x_k$ and $z_k$, we have
\begin{equation}
    \notag
    \begin{aligned}
    d^2(w_{k+1},x^*) &\leq \max\left\{ d^2(x_k,x^*), d^2(z_k,x^*) \right\} \\
    &\leq \max\left\{ 2\mu^{-1}(f(x_k)-f(x^*)), d^2(z_k,x^*) \right\} \leq  \frac{2C_1 L}{c\mu^2 A_0} \left( 1+c \frac{\mu}{L}\right)^{-k}p_0
    \end{aligned}
\end{equation}
As a result,
\begin{equation}
    d^2(w_k,z_k) \leq 2\left( d^2(w_k,x^*)+d^2(z_k,x^*)\right) \leq \frac{12 C_1 L}{c \mu^2 A_0} \left( 1+c \frac{\mu}{L}\right)^{-(k-1)}p_0
\end{equation}
Finally since $T_{K}(r) = 1 +\mathcal{O}(r^2)$ for small $r$, we have
\begin{equation}
    \notag
    \delta_k - 1 =\mathcal{O}\left( \left( 1+c \frac{\mu}{L}\right)^{-k} \right),
\end{equation}
as desired.
\end{proof}

\begin{theorem}[restatement of \Cref{achieve_acc_time}]
Suppose that $d(w_{k+1},y_k) = \mathcal{O}(1)$ and $\lambda_k=\lambda=\mathcal{O}\left(\frac{1}{L}\right)$, then we have $\xi_k \geq \frac{1}{2}\sqrt{\frac{\mu\lambda}{1+\mu\lambda}}$ after $T = \widetilde{\mathcal{O}}\left( \frac{L}{\mu}\right)$ iterations, where $\widetilde{\mathcal{O}}$ hides logarithmic terms which may depend on $L,\mu$ and the initialization. As a result, \Cref{AHPE_Riemann} achieves acceleration in at most $\widetilde{\mathcal{O}}\left( \frac{L}{\mu}\right)$ iterations.
\end{theorem}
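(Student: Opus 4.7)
The plan is to combine the exponential convergence $\delta_k - 1 \leq C_0 (1 + c\mu/L)^{-k}$ supplied by the preceding lemma with the reference-sequence analysis used in the proof of \Cref{simple_convergence_thm_appendix}. Write $r := \sqrt{\mu\lambda/(1+\mu\lambda)}$; since $\lambda = \Theta(1/L)$ we have $r = \Theta(\sqrt{\mu/L})$. The first claim reduces to showing $\xi_k \geq r/2$ after $T = \widetilde{\mathcal{O}}(L/\mu)$ iterations, after which the acceleration claim follows immediately from $A_{k+1}/A_k = 1/(1-\xi_{k+1}) \geq 1 + \Theta(\sqrt{\mu/L})$ together with $f(x_k) - f(x^*) \leq p_0/A_k$. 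I will also use the baseline lower bound $\xi_k \geq r^2$ for all $k\geq 1$, which follows from \Cref{increasing_A} and the identity $\xi_{k+1}(1-\theta_k) = \mu\lambda/(1+\mu\lambda) = r^2$ derived by direct algebra.

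First, I fix a small absolute constant $c_0 \in (0,1/2)$ (e.g.\ $c_0 = 1/4$) and let $T_1$ be the smallest integer with $C_0(1 + c\mu/L)^{-T_1} \leq c_0 r$. Solving yields $T_1 = \widetilde{\mathcal{O}}(L/\mu)$, where the logarithmic factor is $\log(C_0/c_0 r) = \Theta(\log(L/\mu))$, and for $k \geq T_1$ we have $\delta_k \leq \bar\delta := 1 + c_0 r$. For $k \geq T_1$ I introduce the reference sequence $\zeta_{T_1} := \xi_{T_1}$, $\zeta_{k+1} := \varphi_{\bar\delta}(\zeta_k)$, where $\varphi_\delta$ is the map in~\eqref{fix_point}. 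Monotonicity of $\varphi_\delta$ in its first argument (already shown in the proof of \Cref{simple_convergence_thm_appendix}) combined with monotonicity in $\delta$---obtained by implicit differentiation of $\delta\xi^2 + (\xi_k^2 - \delta r^2)\xi - \xi_k^2 = 0$, which yields $\partial\varphi/\partial\delta = \xi(r^2-\xi)/(2\delta\xi + \xi_k^2 - \delta r^2) \leq 0$ on the regime $\xi > r^2$---implies $\xi_k \geq \zeta_k$ for all $k \geq T_1$. A direct computation on $x^2 + c_0 r \cdot x - r^2(1 + c_0 r) = 0$ shows that the positive fixed point $\zeta^*$ of $\varphi_{\bar\delta}$ satisfies $\zeta^* \geq 7r/8$.

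It remains to bound the number of additional iterations $T_2$ needed for the monotone reference iteration to reach $r/2$ starting from $\zeta_{T_1} \geq r^2$. A Taylor expansion of $\varphi_{\bar\delta}(\alpha r)$ (setting $\alpha = \zeta_k/r$) gives $\varphi_{\bar\delta}(\alpha r) - \alpha r \geq r^2(1-\alpha^2)/2 - O(r^3)$ throughout the regime $\alpha \in (0,1/2]$. In particular, while $\zeta_k \leq r/2$ each iteration increases $\zeta$ by at least $\Theta(r^2)$, so $\zeta_k \geq r/2$ after $T_2 = \mathcal{O}(1/r) = \mathcal{O}(\sqrt{L/\mu})$ iterations. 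Combining with Step~1 gives $\xi_k \geq r/2$ after $T = T_1 + T_2 = \widetilde{\mathcal{O}}(L/\mu)$ iterations, which yields the acceleration claim as noted above.

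The main obstacle is to order and scale the two small-parameter expansions correctly: the target precision $c_0 r$ for $\delta_k$ is not arbitrary, since it must be small enough that the fixed point $\zeta^*$ lies above $3r/4$, yet the preceding lemma guarantees this precision only after $\widetilde{\mathcal{O}}(L/\mu)$ iterations---the rate-limiting step---whereas the pure error dynamics (Step 3) needs only $\mathcal{O}(\sqrt{L/\mu})$ iterations. The Taylor expansion of $\varphi_{\bar\delta}$ used in Step 3 should also be executed with a uniform error bound valid on $[r^2, r/2]$ rather than only near $\zeta^*$; this is where closed-form manipulation of the expression for $\varphi_{\bar\delta}'$ and the identity $\bar\delta - 1 = O(r)$ are most useful. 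Everything else is routine bookkeeping of constants and logarithmic factors.
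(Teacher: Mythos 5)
Your proposal is correct and uses the same two--stage decomposition as the paper's proof: first wait $T_1=\widetilde{\mathcal{O}}(L/\mu)$ iterations (the bottleneck) until the linear decay $\delta_k-1\leq C_0(1+c\mu/L)^{-k}$ from the preceding lemma pushes $\delta_k$ below a fixed threshold, then dominate $\{\xi_k\}$ from below by a reference sequence driven by the map $\varphi$ of \eqref{fix_point} with constant $\delta$. Where you diverge is the analysis of that reference sequence. The paper bounds $\varphi'(x)\leq \tfrac{1}{\sqrt{2}}$ uniformly and invokes geometric convergence to the fixed point, so the second stage costs only $\widetilde{\mathcal{O}}(1)$ iterations; you instead run a drift argument showing each step gains at least $\Theta(r^2)$ while $\zeta_k\leq r/2$, costing $\mathcal{O}(1/r)=\mathcal{O}(\sqrt{L/\mu})$ iterations. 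Your bound for this stage is quantitatively weaker but is still absorbed by $T_1$, and the argument is more elementary in that it needs no global derivative estimate; you also make explicit the monotonicity of $\varphi$ in $\delta$, which the paper's proof only asserts as an ``observation.'' Two bookkeeping remarks: the correction coming from $\bar\delta=1+c_0 r$ in your expansion is of order $c_0\alpha r^2$, i.e.\ $\Theta(r^2)$ rather than $O(r^3)$ as written --- it is dominated by $(1-\alpha^2)r^2/2$ only because $c_0$ is a small absolute constant, so the choice of $c_0$ matters for the drift bound as well as for placing the fixed point above $r/2$; and to get the conclusion for all $k\geq T$ (not just at $k=T$) you should state that, by monotonicity of $\varphi_{\bar\delta}$ and $\zeta^*\geq 7r/8$, the reference sequence never falls back below $r/2$ once it crosses it, which your setup already delivers.
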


\begin{proof}
  We consider the recursive equation of $\xi_k$ derived in the proof of \Cref{simple_convergence_thm}:
  \begin{equation}
      \label{rec_xi}
      \delta_k \xi_{k+1}\left(\xi_{k+1}-\frac{\mu\lambda}{1+\mu\lambda}\right) = \xi_k^2(1-\xi_{k+1})
  \end{equation}
  The previous lemma implies that 
  \begin{equation}
      \label{small_delta}
      \delta_k \leq 1+\sqrt{\frac{\mu\lambda}{1+\mu\lambda}}
  \end{equation}
  holds after $\widetilde{\mathcal{O}}\left( \frac{L}{\mu}\right)$ iterations, where $\widetilde{\mathcal{O}}$ hides logarithmic terms. In the following, we study how many iterations are needed for $\xi_{k} \geq \frac{1}{2}\sqrt{\frac{\mu\lambda}{1+\mu\lambda}}$ after \eqref{small_delta} is guaranteed to hold.
  
  Indeed, note that smaller $\delta_k$ and larger $\xi_k$ implies a larger $\xi_{k+1}$ in \eqref{rec_xi}, it suffices to consider the case $\delta_k = \delta = 1+\sqrt{\frac{\mu\lambda}{1+\mu\lambda}}$.
  
  Now we study the behavior of $\varphi(x)$ defined in \eqref{fix_point} more carefully. Its derivative $\varphi'(x)$ can be written as
  \begin{equation}
      \notag
      \varphi'(x) = \frac{\frac{2+\mu\lambda}{1+\mu\lambda}\delta x}{\left(x^2+\frac{2+\mu\lambda}{1+\mu\lambda}\delta\right)\sqrt{x^4+\frac{4+2\mu\lambda}{1+\mu\lambda}\delta x^2} +\left( x^4+\frac{4+2\mu\lambda}{1+\mu\lambda}\delta x^2 \right) }
  \end{equation}
  Hence for all $\delta, x > 0$ we have $\varphi'(x) \leq \frac{1}{\sqrt{2}}$. This implies that with a constant $\delta$, \eqref{fix_point} converges to its fixed point in $\widetilde{\mathcal{O}}(1)$ iterations. Since for $\delta = 1+\sqrt{\frac{\mu\lambda}{1+\mu\lambda}}$, its fixed point is larger than $1+\frac{1}{2}\sqrt{\frac{\mu\lambda}{1+\mu\lambda}}$, we conclude that a total number of $\widetilde{\mathcal{O}}\left( \frac{L}{\mu}\right)$ iterations are needed for $\xi_k \geq \frac{1}{2}\sqrt{\frac{\mu\lambda}{1+\mu\lambda}}$ to hold.
\end{proof}

\subsection{The general case}
This subsection provides details and proofs of our main results for the general case, where the additional distortion is present. We begin with the following result, which shows that we need to control the distance between $y_k$ and $y_k'$.
\begin{lemma}
\label{rescale2}
Suppose that $\sigma_k < 1$, then
\begin{equation}
    \label{rescale2_ineq}
    \begin{aligned}
    p_k-p_{k+1} &\geq \frac{(1-\sigma_k)A_{k+1}}{4\lambda_k} d_{w_{k+1}}^2(x_{k+1},y_k) + \frac{\mu\theta_{k}a_{k+1}A_k}{2(A_k+\theta_{k}a_{k+1})}d_{w_{k+1}}^2(x_k,z_k) \\
    &\quad +\frac{(1-\sigma_k)A_{k+1}}{6} \sqrt{\frac{\mu\lambda_k}{1+\mu\lambda_k}}d_{w_{k+1}}(x_{k+1},y_k) \|v_{k+1}\| \\
    &\quad - \mu(\theta_k a_{k+1}+A_k) d_{w_{k+1}}(y_k,y_k') \cdot \left\| \mathtt{Exp}_{w_{k+1}}^{-1}(y_k)-\mathtt{Exp}_{w_{k+1}}^{-1}(x_{k+1})+\mu^{-1}v_{k+1} \right\|
    \end{aligned}
\end{equation}
\end{lemma}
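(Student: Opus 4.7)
The plan is to upgrade \Cref{update_distortion_appendix} to the general case $y_k \neq y_k'$, by first replacing the $y_k'$-centered quadratic term with a $y_k$-centered one at the cost of a distortion-error term, and then carrying out an extended version of the quadratic-form analysis underlying \Cref{rescale} so as to extract both a $d_{w_{k+1}}^2(x_{k+1},y_k)$ contribution and a $d_{w_{k+1}}(x_{k+1},y_k)\|v_{k+1}\|$ cross term. Concretely, set $a := \mathtt{Exp}_{w_{k+1}}^{-1}(y_k')-\mathtt{Exp}_{w_{k+1}}^{-1}(x_{k+1})+\mu^{-1}v_{k+1}$ and $b := \mathtt{Exp}_{w_{k+1}}^{-1}(y_k)-\mathtt{Exp}_{w_{k+1}}^{-1}(x_{k+1})+\mu^{-1}v_{k+1}$, so that $\|a-b\| = d_{w_{k+1}}(y_k,y_k')$. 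The identity $\|a\|^2 = \|b\|^2 + 2\langle b, a-b\rangle + \|a-b\|^2$ together with Cauchy--Schwarz yields $\|a\|^2 \geq \|b\|^2 - 2\|b\|\, d_{w_{k+1}}(y_k,y_k')$, and substituting this into \eqref{update_distortion_eq_appendix} produces exactly the final negative distortion-error term appearing in \eqref{rescale2_ineq}.

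After this substitution, it remains to show that the remaining part of the RHS (apart from the non-negative $d_{w_{k+1}}^2(x_k,z_k)$ piece, which is kept intact) dominates $c_1 d^2 + c_2 d \|v_{k+1}\|$, where $d := d_{w_{k+1}}(x_{k+1},y_k)$, $c_1 := \tfrac{(1-\sigma_k)A_{k+1}}{4\lambda_k}$, and $c_2 := \tfrac{(1-\sigma_k)A_{k+1}}{6}\sqrt{\tfrac{\mu\lambda_k}{1+\mu\lambda_k}}$. Setting $u := \mathtt{Exp}_{w_{k+1}}^{-1}(y_k) - \mathtt{Exp}_{w_{k+1}}^{-1}(x_{k+1})$ so that $\|u\| = d$, I would expand $\|b\|^2 = d^2 + 2\mu^{-1}\langle u, v_{k+1}\rangle + \mu^{-2}\|v_{k+1}\|^2$ together with the analogous expansion of $\|\mathtt{Exp}_{w_{k+1}}^{-1}(y_k) - \mathtt{Exp}_{w_{k+1}}^{-1}(x_{k+1}) - \lambda_k v_{k+1}\|^2$, simplify the coefficients using \Cref{increasing_A} in the convenient form $\theta_k a_{k+1} + A_k = A_{k+1}/(1+\mu\lambda_k)$, and bound the cross term via $\langle u, v_{k+1}\rangle \leq d\|v_{k+1}\|$ (legitimate since its coefficient in the expansion turns out to be negative). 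This reduces the problem to a scalar quadratic inequality of the form $C_1 d^2 - C_2 d\|v_{k+1}\| + \tfrac{\lambda_k C_2}{2}\|v_{k+1}\|^2 \geq c_1 d^2 + c_2 d\|v_{k+1}\|$ for explicit positive constants $C_1, C_2$ depending on $A_{k+1}, \mu, \lambda_k, \sigma_k$.

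The main obstacle is verifying this scalar inequality, which is equivalent to the discriminant condition $(C_2 + c_2)^2 \leq 2\lambda_k C_2 (C_1 - c_1)$. A short calculation should give the key identity $2\lambda_k C_1 - C_2 = (1-\sigma_k)A_{k+1}$ (which is already implicit in the proof of \Cref{rescale}, where it ensures tightness of the $\frac{(1-\sigma_k)A_{k+1}}{2\lambda_k}$ coefficient). This turns the required condition into $2 C_2 c_2 + c_2^2 \leq \tfrac{1}{2}\, C_2(1-\sigma_k)A_{k+1}$. Using the lower bound $C_2 \geq A_{k+1}\mu\lambda_k/(1+\mu\lambda_k)$ (which follows from $\sigma_k \in (0,1)$) together with the specific form of $c_2$, both the linear contribution $2 C_2 c_2$ and the quadratic correction $c_2^2$ fit within the available budget; the constants $\tfrac{1}{4}$ in $c_1$ and $\tfrac{1}{6}$ in $c_2$ are chosen precisely so that this closes, and the $\sqrt{\mu\lambda_k/(1+\mu\lambda_k)}$ factor in $c_2$ is exactly what matches the scaling of the slack in $C_2$ relative to the budget.
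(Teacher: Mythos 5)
Your proposal is correct and follows essentially the same route as the paper: replace the $y_k'$-centered quadratic by the $y_k$-centered one via the expansion $\|a\|^2 \geq \|b\|^2 - 2\|b\|\,d_{w_{k+1}}(y_k,y_k')$, then run the quadratic-form argument from \Cref{rescale} with the looser target $c_1 d^2 + c_2 d\|v_{k+1}\|$ and close via a discriminant check. Your intermediate identities ($\gamma = \lambda_k C_2/2$, $2\lambda_k C_1 - C_2 = (1-\sigma_k)A_{k+1}$, hence the reduction to $2C_2 c_2 + c_2^2 \leq \tfrac{1}{2}C_2(1-\sigma_k)A_{k+1}$) all check out, and the final scalar inequality $\tfrac{\sqrt{x}}{3} + \tfrac{1-\sigma_k}{36} \leq \tfrac{1}{2}$ with $x = \tfrac{\mu\lambda_k}{1+\mu\lambda_k}$ holds as you claim.
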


\begin{proof}
Note that
\begin{equation}
    \notag
    \begin{aligned}
    &\quad \left\| \mathtt{Exp}_{w_{k+1}}^{-1}(y_k')-\mathtt{Exp}_{w_{k+1}}^{-1}(x_{k+1})+\mu^{-1}v_{k+1} \right\|^2 \\
    &\geq \left\| \mathtt{Exp}_{w_{k+1}}^{-1}(y_k)-\mathtt{Exp}_{w_{k+1}}^{-1}(x_{k+1})+\mu^{-1}v_{k+1} \right\|^2 \\
    &\quad+ 2\left\langle  \mathtt{Exp}_{w_{k+1}}^{-1}(y_k') - \mathtt{Exp}_{w_{k+1}}^{-1}(y_k),  \mathtt{Exp}_{w_{k+1}}^{-1}(y_k)-\mathtt{Exp}_{w_{k+1}}^{-1}(x_{k+1})+\mu^{-1}v_{k+1} \right\rangle \\
    &\geq \left\| \mathtt{Exp}_{w_{k+1}}^{-1}(y_k)-\mathtt{Exp}_{w_{k+1}}^{-1}(x_{k+1})+\mu^{-1}v_{k+1} \right\|^2 \\
    &\quad - 2 d_{w_{k+1}}(y_k,y_k') \cdot \left\| \mathtt{Exp}_{w_{k+1}}^{-1}(y_k)-\mathtt{Exp}_{w_{k+1}}^{-1}(x_{k+1})+\mu^{-1}v_{k+1} \right\|
    \end{aligned}
\end{equation}
where the last line follows from Cauchy-Schwarz inequality. The remaining steps of the proof is similar to \Cref{rescale}, except that we also need to incorporate the $\left\langle \mathtt{Exp}_{w_{k+1}}^{-1}(y_k)-\mathtt{Exp}_{w_{k+1}}^{-1}(x_{k+1}), v_{k+1}\right\rangle$ into the bound. Indeed we have
\begin{equation}
    \notag
    \begin{aligned}
    &\quad \alpha d_{w_{k+1}}^2(x_{k+1},y_k) + 2\beta \left\langle \mathtt{Exp}_{w_{k+1}}^{-1}(y_k)-\mathtt{Exp}_{w_{k+1}}^{-1}(x_{k+1}), v_{k+1} \right\rangle + \gamma \|v_{k+1}\|^2 \\
    &\geq \frac{(1-\sigma_k)A_{k+1}}{4\lambda_k} d_{w_{k+1}}^2(x_{k+1},y_k) +  A_{k+1}\left( \sqrt{\frac{\mu^2\lambda_k^2}{(1+\mu\lambda_k)^2}+\frac{(1-\sigma_k)\mu\lambda_k}{2(1+\mu\lambda_k)}} - \frac{\mu\lambda_k}{1+\mu\lambda_k} \right) \cdot \\
    &\quad \left\| \mathtt{Exp}_{w_{k+1}}^{-1}(y_k)-\mathtt{Exp}_{w_{k+1}}^{-1}(x_{k+1}) \right\| \|v_{k+1}\| \\
    &\geq \frac{(1-\sigma_k)A_{k+1}}{4\lambda_k} d_{w_{k+1}}^2(x_{k+1},y_k) + \frac{(1-\sigma_k)A_{k+1}}{6} \sqrt{\frac{\mu\lambda_k}{1+\mu\lambda_k}} d_{w_{k+1}}(x_{k+1},y_k) \|v_{k+1}\|
    \end{aligned}
\end{equation}
where $\alpha, \beta$ and $\gamma$ are the coefficients defined in \Cref{rescale}. Hence, by \Cref{update_distortion_appendix} we have
\begin{equation}
    \notag
    \begin{aligned}
    p_k-p_{k+1} &\geq \frac{\mu}{2}(\theta_k a_{k+1}+A_k)\left\| \mathtt{Exp}_{w_{k+1}}^{-1}(y_k')-\mathtt{Exp}_{w_{k+1}}^{-1}(x_{k+1})+\mu^{-1}v_{k+1} \right\|^2 \\
    &\quad +\frac{A_{k+1}}{2\lambda_k\sigma_k}\left\| \mathtt{Exp}_{w_{k+1}}^{-1}(y_k)-\mathtt{Exp}_{w_{k+1}}^{-1}(x_{k+1})-\lambda_k v_{k+1} \right\|^2 \\
    &\quad + \frac{\mu\theta_{k}a_{k+1}A_k}{2(A_k+\theta_{k}a_{k+1})}d_{w_{k+1}}^2(x_k,z_k) - \frac{\sigma_k A_{k+1}}{2\lambda_k}d_{w_{k+1}}^2(x_{k+1},y_k)-\frac{A_{k+1}}{2\mu}\|v_{k+1}\|^2 \\
    &\geq \alpha d_{w_{k+1}}^2(x_{k+1},y_k) + 2\beta \left\langle \mathtt{Exp}_{w_{k+1}}^{-1}(y_k)-\mathtt{Exp}_{w_{k+1}}^{-1}(x_{k+1}), v_{k+1} \right\rangle + \gamma \|v_{k+1}\|^2 \\
    &\quad + \frac{\mu\theta_{k}a_{k+1}A_k}{2(A_k+\theta_{k}a_{k+1})}d_{w_{k+1}}^2(x_k,z_k) \\
    &\quad - \mu(\theta_k a_{k+1}+A_k) d_{w_{k+1}}(y_k,y_k') \cdot \left\| \mathtt{Exp}_{w_{k+1}}^{-1}(y_k)-\mathtt{Exp}_{w_{k+1}}^{-1}(x_{k+1})+\mu^{-1}v_{k+1} \right\| \\
    &\geq \frac{(1-\sigma_k)A_{k+1}}{4\lambda_k} d_{w_{k+1}}^2(x_{k+1},y_k) + \frac{\mu\theta_{k}a_{k+1}A_k}{2(A_k+\theta_{k}a_{k+1})}d_{w_{k+1}}^2(x_k,z_k) \\
    &\quad +\frac{(1-\sigma_k)A_{k+1}}{6} \sqrt{\frac{\mu\lambda_k}{1+\mu\lambda_k}} d_{w_{k+1}}(x_{k+1},y_k) \|v_{k+1}\| \\
    &\quad - \mu(\theta_k a_{k+1}+A_k) d_{w_{k+1}}(y_k,y_k') \cdot \left\| \mathtt{Exp}_{w_{k+1}}^{-1}(y_k)-\mathtt{Exp}_{w_{k+1}}^{-1}(x_{k+1})+\mu^{-1}v_{k+1} \right\|
    \end{aligned}
\end{equation}
as desired.
\end{proof}

In order to ensure potential decrease, it suffices to control the magnitude of the error term $d_{w_{k+1}}(y_k,y_k')$, as shown in the corollary below:
\begin{corollary}
 Suppose that
 \begin{equation}
     \notag
     d_{w_{k+1}}(y_k,y_k') \leq \frac{1-\sigma_k}{6}\min\left\{\sqrt{\mu\lambda_k},1 \right\} d_{w_{k+1}}(x_{k+1},y_k),
 \end{equation}
then we have the following inequality which implies potential decrease:
\begin{equation}
    \notag
    p_k-p_{k+1} \geq \frac{(1-\sigma_k)A_{k+1}}{12\lambda_k} d_{w_{k+1}}^2(x_{k+1},y_k) + \frac{\mu\theta_{k}a_{k+1}A_k}{2(A_k+\theta_{k}a_{k+1})}d_{w_{k+1}}^2(x_k,z_k).
\end{equation}
\end{corollary}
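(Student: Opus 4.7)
The plan is to start from the inequality in \Cref{rescale2} and show that its single negative term can be absorbed into the two positive terms on its right-hand side whenever the hypothesis on $d_{w_{k+1}}(y_k,y_k')$ holds. The desired outcome is that the coefficient of $d_{w_{k+1}}^2(x_{k+1},y_k)$ shrinks from $\frac{(1-\sigma_k)A_{k+1}}{4\lambda_k}$ to $\frac{(1-\sigma_k)A_{k+1}}{12\lambda_k}$, while the $d_{w_{k+1}}^2(x_k,z_k)$ term passes through untouched. A preliminary simplification is to rewrite $\theta_k a_{k+1}+A_k$ via \Cref{increasing_A} as $\frac{A_{k+1}}{1+\mu\lambda_k}$, so all terms in \Cref{rescale2_ineq} share a common $A_{k+1}$ factor and the coefficient matching below becomes transparent.

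Next I would apply the triangle inequality
\[
\left\| \mathtt{Exp}_{w_{k+1}}^{-1}(y_k) - \mathtt{Exp}_{w_{k+1}}^{-1}(x_{k+1}) + \mu^{-1} v_{k+1} \right\| \le d_{w_{k+1}}(x_{k+1}, y_k) + \mu^{-1}\|v_{k+1}\|
\]
to split the negative term in \Cref{rescale2_ineq} into two pieces: one of size $\mu(\theta_k a_{k+1}+A_k)\,d_{w_{k+1}}(y_k,y_k')\cdot d_{w_{k+1}}(x_{k+1},y_k)$ and one of size $(\theta_k a_{k+1}+A_k)\,d_{w_{k+1}}(y_k,y_k')\cdot\|v_{k+1}\|$. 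The key structural idea is to use the hypothesis differently on each piece, exploiting the two branches of the $\min$: bound $d_{w_{k+1}}(y_k,y_k')\le \tfrac{1-\sigma_k}{6}d_{w_{k+1}}(x_{k+1},y_k)$ on the first piece, and bound $d_{w_{k+1}}(y_k,y_k')\le \tfrac{1-\sigma_k}{6}\sqrt{\mu\lambda_k}\,d_{w_{k+1}}(x_{k+1},y_k)$ on the second.

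On the first piece, combining with $\frac{\mu}{1+\mu\lambda_k}\le \frac{1}{\lambda_k}$ yields the upper bound $\frac{(1-\sigma_k)A_{k+1}}{6\lambda_k}d_{w_{k+1}}^2(x_{k+1},y_k)$, and subtracting from the positive $\frac{(1-\sigma_k)A_{k+1}}{4\lambda_k}d_{w_{k+1}}^2(x_{k+1},y_k)$ in \Cref{rescale2_ineq} gives exactly the target $\frac{(1-\sigma_k)A_{k+1}}{12\lambda_k}d_{w_{k+1}}^2(x_{k+1},y_k)$. On the second piece, the factor $\frac{\sqrt{\mu\lambda_k}}{1+\mu\lambda_k} \le \sqrt{\frac{\mu\lambda_k}{1+\mu\lambda_k}}$ (which holds because $1+\mu\lambda_k\ge 1$) allows the piece to be dominated exactly by the positive cross-term $\frac{(1-\sigma_k)A_{k+1}}{6}\sqrt{\frac{\mu\lambda_k}{1+\mu\lambda_k}}\,d_{w_{k+1}}(x_{k+1},y_k)\|v_{k+1}\|$ already present in \Cref{rescale2_ineq}, so those two $\|v_{k+1}\|$ terms cancel.

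The only real subtlety is noticing that the $\min\{\sqrt{\mu\lambda_k},1\}$ in the hypothesis is doing double duty: the $1$-branch is calibrated to handle the $d_{w_{k+1}}(x_{k+1},y_k)$ factor from the triangle inequality, while the $\sqrt{\mu\lambda_k}$-branch is calibrated to match the shape $\sqrt{\frac{\mu\lambda_k}{1+\mu\lambda_k}}$ of the $\|v_{k+1}\|$ cross-term that \Cref{rescale2} was designed to produce. Once the triangle split and the matching min-branch assignment are in hand, the rest of the argument is a short arithmetic verification.
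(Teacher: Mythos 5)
Your proposal is correct and follows essentially the same route as the paper's proof: the paper's two displayed inequalities are precisely the domination of the two pieces obtained after the triangle-inequality split of $\bigl\| \mathtt{Exp}_{w_{k+1}}^{-1}(y_k)-\mathtt{Exp}_{w_{k+1}}^{-1}(x_{k+1})+\mu^{-1}v_{k+1} \bigr\|$, with the $1$-branch of the $\min$ absorbing the $d_{w_{k+1}}(x_{k+1},y_k)$ piece into the quadratic term (shrinking $\tfrac{1}{4\lambda_k}$ to $\tfrac{1}{12\lambda_k}$) and the $\sqrt{\mu\lambda_k}$-branch cancelling the $\|v_{k+1}\|$ piece against the positive cross-term. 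The only difference is that you make the triangle-inequality split and the branch assignment explicit, which the paper leaves implicit.
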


\begin{proof}
 Under the given condition, we can see that
 \begin{equation}
     \notag
     \begin{aligned}
     &\quad \frac{(1-\sigma_k)A_{k+1}}{6}\sqrt{\frac{\mu\lambda_k}{1+\mu\lambda_k}} d_{w_{k+1}}(x_{k+1},y_k) \\ 
     &\geq \frac{1-\sigma_k}{6}\sqrt{\mu\lambda_k(1+\mu\lambda_k)} (\theta_{k}a_{k+1}+A_k) \cdot \frac{6}{1-\sigma_k} \frac{1}{\sqrt{\mu\lambda_k}} d_{w_{k+1}}(y_k,y_k') \\
     &\geq (\theta_k a_{k+1}+A_k) d_{w_{k+1}}(y_k,y_k')
     \end{aligned}
 \end{equation}
 and
 \begin{equation}
     \notag
     \begin{aligned}
     \frac{(1-\sigma_k)A_{k+1}}{6\lambda_k} d_{w_{k+1}}(x_{k+1},y_k) \geq \frac{1+\mu\lambda_k}{\lambda_k} (\theta_k a_{k+1}+A_k) d_{w_{k+1}}(y_k,y_k') \geq \mu(\theta_k a_{k+1}+A_k) d_{w_{k+1}}(y_k,y_k')
     \end{aligned}
 \end{equation}
 Plugging the above inequalities into \eqref{rescale2_ineq}, we obtain the desired result.
\end{proof}

\begin{lemma}[restatement of \Cref{contraction}]
\label{contraction_appendix}
Suppose that $x\in\mathcal{M}$ and $y,a\in T_{x}\mathcal{M}$. Let $z = \mathtt{Exp}_x(a)$, then we have
\begin{equation}
\notag
d\left(\mathtt{Exp}_{x}(y+a), \mathtt{Exp}_{z}\left(\Gamma_{x}^{z} y\right)\right) \leq   \min \{\|a\|,\|y\|\} S_{K}(\|a\|+\|y\|)
\end{equation}
where
\begin{equation}
\notag
    S_{K}(r) =  \cosh\left(\sqrt{K}r\right)-\frac{\sinh\left(\sqrt{K}r\right)}{\sqrt{K}r}
\end{equation}
\end{lemma}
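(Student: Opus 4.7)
The plan is to control the distance via a two-parameter geodesic variation and Rauch's comparison theorem, which applies under the sectional-curvature bound $K_{\mathrm{sec}} \in [-K, 0]$. In the flat limit both $\mathtt{Exp}_x(y+a)$ and $\mathtt{Exp}_z(\Gamma_x^z y)$ coincide with $x + y + a$, so the inequality must be driven entirely by a curvature defect. Accordingly the bound features $S_K$, which is precisely the residual one obtains after subtracting the flat-space ``linear'' contribution from a $\cosh$-type Jacobi-field estimate.

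Concretely, I would introduce the variation
\[
F(s, t) := \mathtt{Exp}_{\mathtt{Exp}_x(ta)}\bigl(s \cdot \Gamma_x^{\mathtt{Exp}_x(ta)} y\bigr), \qquad s, t \in [0, 1],
\]
together with the auxiliary curve $G(t) := \mathtt{Exp}_x(y + ta)$. For each fixed $t$, the map $s \mapsto F(s, t)$ is a geodesic with constant speed $\|y\|$; moreover $F(1, 0) = G(0) = \mathtt{Exp}_x(y)$, $F(1, 1) = \mathtt{Exp}_z(\Gamma_x^z y)$, and $G(1) = \mathtt{Exp}_x(y + a)$, so the target distance is exactly $d(F(1,1), G(1))$. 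The variation field $V(s, t) := \partial_t F(s, t)$ is a Jacobi field along $s \mapsto F(s, t)$ with $V(0, t) = \Gamma_x^{\mathtt{Exp}_x(ta)} a$ of norm $\|a\|$ and $\nabla_s V(0, t) = 0$, the latter because parallel transport along the base curve $t \mapsto \mathtt{Exp}_x(ta)$ has vanishing covariant $t$-derivative. Decomposing $V$ into parallel and perpendicular components with respect to $\partial_s F$, the parallel part is parallel-translated at constant norm---exactly the Euclidean contribution---while the perpendicular part is controlled via Rauch by its counterpart in the constant-curvature $-K$ model. An analogous Jacobi-field analysis along $s \mapsto \mathtt{Exp}_x(s(y + ta))$ handles $G(t)$, with $J(0) = 0$, $\nabla_s J(0) = a$. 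Matching the two flat-space contributions and integrating the curvature-induced excess over $t \in [0, 1]$ yields a bound of the form $\|a\| \cdot S_K(\|a\| + \|y\|)$ on $d(F(1,1), G(1))$. A symmetric construction exchanging the roles of $a$ and $y$ produces the alternative factor $\|y\|$, giving the claimed $\min\{\|a\|, \|y\|\}$.

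The main obstacle is extracting the sharp $S_K$ factor rather than a coarser $\cosh$-type bound. A naive application of Rauch produces an estimate that does not vanish in the Euclidean case, so the argument must carefully line up the two exponential-map compositions and cancel the flat-space pieces exactly. This is essentially the content of the cited lemma of~\citet{sun2019escaping}, and the $S_K$ form---$\cosh(\sqrt{K}r) - \sinh(\sqrt{K}r)/(\sqrt{K}r)$---emerges as the precise residual once the cancellation is performed correctly, so the proof amounts to invoking their result with the appropriate identification of variables.
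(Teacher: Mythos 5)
Your overall strategy---a geodesic variation plus a Jacobi-field comparison, with the sharp computation deferred to \citet{sun2019escaping}---is the same one the paper uses, and your diagnosis of why a naive Rauch bound fails (it does not vanish in the flat case) is exactly right. However, the specific variation you set up does not produce the cancellation you need, and the step where you ``match the two flat-space contributions'' is precisely where the argument breaks down as written. Your two curves $t\mapsto F(1,t)$ and $t\mapsto G(t)$ do start at the common point $\mathtt{Exp}_x(y)$ and end at the two target points, but each of them has length close to $\|a\|$ even in flat space (both travel from $x+y$ to $x+y+a$ there). To conclude that their \emph{endpoints} are within an $S_K$-sized defect of each other you must compare $\partial_t F(1,t)$ with $\partial_t G(t)$, which live in tangent spaces at different points; making that comparison rigorous (via holonomy estimates, or a Gronwall-type bound on $d(F(1,t),G(t))$ through the first and second variation of arc length) is itself a curvature-control problem of the same order of difficulty as the lemma, and your sketch does not address it.

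The construction that works, and the one the paper imports from~\citep[Section B.3]{sun2019escaping}, uses a \emph{single} homotopy joining the two target points directly: with $\gamma(t)=\mathtt{Exp}_x(ta)$, set
\begin{equation*}
c(r,t) \;=\; \mathtt{Exp}_{\gamma(t)}\bigl(r\,\Gamma_x^{\gamma(t)}(y+(1-t)a)\bigr).
\end{equation*}
The interpolated vector $y+(1-t)a$ (rather than your $y$) is the key: in flat space $c(1,t)=x+y+a$ for every $t$, so the curve $t\mapsto c(1,t)$ is constant and its length measures the curvature defect alone. One then bounds the distance by $\int_0^1\|J_t(1)\|\,dt$, where $J_t(r)=\partial_t c(r,t)$ is a Jacobi field along $r\mapsto c(r,t)$ whose tangential part vanishes at $r=1$ and whose normal part obeys $\|J_t^{\mathrm{norm}}(1)\|\le\|J_t^{\mathrm{norm}}(0)\|\,S_K(\rho_t)$ with $\rho_t=\|y+(1-t)a\|$. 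Finally $\|J_t^{\mathrm{norm}}(0)\|\le\|a\|\,\|y\|/\rho_t$ (the component of $\gamma'(t)$ normal to $y+(1-t)a$), and since $r^{-1}S_K(r)$ is increasing this yields $\tfrac{\|a\|\|y\|}{\|a\|+\|y\|}S_K(\|a\|+\|y\|)\le\min\{\|a\|,\|y\|\}\,S_K(\|a\|+\|y\|)$ in one shot---no symmetric second construction is needed to obtain the minimum.
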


\begin{proof}
Define $\gamma(t) = \mathtt{Exp}_{x}(ta)$ and the curve
\begin{equation}
    \notag
    t \to c(r,t) = \mathtt{Exp}_{\gamma(t)}\left( r\Gamma_{x}^{\gamma(t)}(y+(1-t)a)\right)
\end{equation}
for fixed $r$. Let $J_t^{\text{norm}}(r) = \frac{\text{d}}{\text{d} t} c(r,t)$, then it is shown in ~\cite[Section B.3]{sun2019escaping} that 
\begin{equation}
    \notag
    d\left(\mathtt{Exp}_{x}(y+a), \mathtt{Exp}_{z}\left(\Gamma_{x}^{z} y\right)\right) \leq \int_{0}^{1} \left\| J_t^{\text{norm}}(1)\right\| \text{d} t
\end{equation}
Moreover, for fixed $t \in [0,1]$, let $\tilde{z} = \Gamma_{x}^{\gamma(t)}(y+(1-t)a)$ and $\rho_t = \|y+(1-t)a\| = \|\tilde{z}\|$, then its easy to see that $\|z\| \leq \|y\|+\|a\|$, and the proof in ~\cite[Section B.3]{sun2019escaping} implies that
\begin{equation}
    \notag
    \left\| J_t^{\text{norm}}(1)\right\| \leq \left\| J_t^{\text{norm}}(0)\right\| S_{K}(\rho_t) \leq \frac{\|a\|\cdot\|y\|}{\rho_t} S_{K}(\rho_t) \leq \frac{\|a\|\cdot\|y\|}{\|a\|+\|y\|} S_{K}(\|a\|+\|y\|),
\end{equation}
where the last step follows from the observation that $r^{-1}S_{K}(r)$ is increasing in $r$, by Taylor's expansion. Hence the result follows.
\end{proof}

The following lemma gives an upper bound of $d_{w_{k+1}}(y_k,y_k')$ in terms of function $S_k$ and a distance term.

\begin{lemma}[restatement of \Cref{addis_bound}]
\label{addis_bound_appendix}
We have for all $k \geq 1$ that 
\begin{equation}
    \notag
    \begin{aligned}
    d_{w_{k+1}}(y_k,y_k') \leq 2 d^{*}(w_{k+1};x_k,z_k)\cdot S_{K}\left( d(x_k,z_k)+d^*(w_{k+1};x_k,z_k) \right)
    \end{aligned}
\end{equation}
where $d^*(w;x,z) = \min\left\{ d(w,y): y = \mathtt{Exp}_{x}(t\cdot\mathtt{Exp}_{x}^{-1}(z), t \in [0,1] \right\}$ is the distance from $w$ to the geodesic connecting $x$ and $z$.
\end{lemma}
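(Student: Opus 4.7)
The plan is to introduce the point $\bar y$ on the geodesic from $x_k$ to $z_k$ that realizes the distance $d^*(w_{k+1};x_k,z_k)$ to $w_{k+1}$, and to apply \Cref{contraction} a few times with $\bar y$ as the bridging point. Writing $a = \mathtt{Exp}_{w_{k+1}}^{-1}(\bar y)$, $u = \mathtt{Exp}_{w_{k+1}}^{-1}(x_k)$, $v = \mathtt{Exp}_{w_{k+1}}^{-1}(z_k)$, and $t = \theta_k a_{k+1}/(A_k+\theta_k a_{k+1})$, we have $\|a\|=d^*(w_{k+1};x_k,z_k)$ and $y_k' = \mathtt{Exp}_{w_{k+1}}((1-t)u+tv)$ by definition. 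The key geometric observation is that since $x_k$, $\bar y$, $y_k$, and $z_k$ all lie on a common geodesic, their images under $\mathtt{Exp}_{\bar y}^{-1}$ are collinear in $T_{\bar y}\mathcal{M}$, and a short computation parameterizing the geodesic yields $\mathtt{Exp}_{\bar y}^{-1}(y_k)=(1-t)\mathtt{Exp}_{\bar y}^{-1}(x_k)+t\,\mathtt{Exp}_{\bar y}^{-1}(z_k)$ with the same coefficient $t$ that defines $y_k'$.

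I would then apply \Cref{contraction} with base point $w_{k+1}$, the vector $a$ above, and tangent vector $y := (1-t)(u-a)+t(v-a)$, so that $\mathtt{Exp}_{w_{k+1}}(y+a)=y_k'$ and, by linearity of parallel transport, $\mathtt{Exp}_{\bar y}(\Gamma_{w_{k+1}}^{\bar y}y) =: \tilde y_k = \mathtt{Exp}_{\bar y}((1-t)\Gamma_{w_{k+1}}^{\bar y}(u-a)+t\Gamma_{w_{k+1}}^{\bar y}(v-a))$. Since $\|y\|\le\max\{\|u-a\|,\|v-a\|\}\le d(x_k,z_k)$ (using $d_{w_{k+1}}\le d$) and $\|a\|=d^*$, monotonicity of $S_K$ gives $d(y_k',\tilde y_k)\le d^* S_K(d^*+d(x_k,z_k))$. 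Two further applications of \Cref{contraction}, with $y=u-a$ and $y=v-a$ respectively, bound the tangent-space discrepancies $\|\mathtt{Exp}_{\bar y}^{-1}(x_k)-\Gamma_{w_{k+1}}^{\bar y}(u-a)\|$ and $\|\mathtt{Exp}_{\bar y}^{-1}(z_k)-\Gamma_{w_{k+1}}^{\bar y}(v-a)\|$ by the same quantity; taking the convex combination with weights $1-t$ and $t$, together with the collinearity observation, yields a bound of order $d^* S_K(d^*+d(x_k,z_k))$ on the $T_{\bar y}\mathcal{M}$-distance between $\tilde y_k$ and $y_k$, which transfers to a manifold-distance bound of the same order via a Rauch-type comparison on the Hadamard manifold. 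A triangle inequality combined with $d_{w_{k+1}}(\cdot,\cdot)\le d(\cdot,\cdot)$ then produces the claimed factor of $2$.

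The main obstacle will be the last conversion step: passing from a tangent-space bound at $\bar y$ between $\tilde y_k$ and $y_k$ to an actual manifold-distance bound, while preserving the exact $d^* S_K(d^*+d(x_k,z_k))$ scaling without picking up additional curvature-dependent constants. A cleaner alternative is to restructure the argument so that \Cref{contraction} is invoked once per endpoint directly between the manifold and $T_{w_{k+1}}\mathcal{M}$ (bypassing the detour through $T_{\bar y}\mathcal{M}$), at the cost of introducing two auxiliary points which must be shown to enter the convex combination symmetrically; this symmetry is precisely where the factor of $2$ originates, and it also explains why the bound depends on $d(x_k,z_k)+d^*$ rather than on the individual distances $d(w_{k+1},x_k)$ and $d(w_{k+1},z_k)$.
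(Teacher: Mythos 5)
The gap you flagged as ``the main obstacle'' is real and fatal as written. You end up with a bound on the \emph{tangent-space} distance $d_{\bar y}(y_k,\tilde y_k)$, but what you need (after the triangle inequality in $T_{w_{k+1}}\mathcal{M}$) is a bound on the \emph{manifold} distance $d(y_k,\tilde y_k)$ or directly on $d_{w_{k+1}}(y_k,\tilde y_k)$. On a Hadamard manifold the inequality $d_{\bar y}\le d$ goes the wrong way: a small tangent-space distance at $\bar y$ does not upper-bound the Riemannian distance. Any Rauch/Toponogov-type conversion to reverse this inequality would reintroduce a multiplicative factor of order $T_K(d(\bar y,\cdot))$, precisely the curvature-dependent constant you are trying to avoid, so the clean $2\,d^*S_K(\cdot)$ bound would not survive.

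The culprit is the \emph{direction} in which you invoke \Cref{contraction}: you take $x=w_{k+1}$, $z=\bar y$, so your auxiliary point $\tilde y_k$ lives in the $\bar y$-frame, and all your intermediate inequalities are anchored at the wrong tangent space. The paper applies the lemma with the roles flipped, $x=\bar y$ (their $w$), $z=w_{k+1}$, so the auxiliary point $y_k''$ is defined via $\mathtt{Exp}_{w_{k+1}}$, i.e.\ it lives in $T_{w_{k+1}}\mathcal{M}$. This makes both halves of the triangle inequality $d_{w_{k+1}}(y_k,y_k')\le d_{w_{k+1}}(y_k,y_k'')+d_{w_{k+1}}(y_k'',y_k')$ accessible: the second term is literally a norm in $T_{w_{k+1}}\mathcal{M}$, which splits by linearity of the convex combination into two pieces each bounded above by a \emph{manifold} distance (using $d_{w_{k+1}}\le d$, now in the helpful direction) and hence by two applications of \Cref{contraction}; the first term is bounded by $d(y_k,y_k'')\ge d_{w_{k+1}}(y_k,y_k'')$ together with one more application of \Cref{contraction}. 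Note that you therefore need three invocations of \Cref{contraction}, not the two your ``cleaner alternative'' anticipates, and the factor of $2$ comes from the triangle inequality between the $d(y_k,y_k'')$-piece and the $d_{w_{k+1}}(y_k'',y_k')$-piece, each of which is $\le d^*S_K(\cdot)$ --- not from a symmetric pairing of the endpoint auxiliary points. With that reorientation the rest of your computation (collinearity in $T_{\bar y}\mathcal{M}$ giving $\mathtt{Exp}_{\bar y}^{-1}(y_k)=(1-t)\mathtt{Exp}_{\bar y}^{-1}(x_k)+t\,\mathtt{Exp}_{\bar y}^{-1}(z_k)$, and the norm bounds $\|a\|=d^*$, $\|y\|\le d(x_k,z_k)$) carries over essentially verbatim.
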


\begin{proof}
Let $\tau = \frac{\theta_{k}a_{k+1}}{A_k+\theta_k a_{k+1}}$, then by definition
\begin{equation}
    \notag
    y_k' = \mathtt{Exp}_{w_{k+1}}\left( (1-\tau)\mathtt{Exp}_{w_{k+1}}^{-1}(x_k)+\tau\mathtt{Exp}_{w_{k+1}}^{-1}(z_k) \right)
\end{equation}
Suppose that $w$ is a point on the geodesic connecting $x_k$ and $z_k$, then
\begin{equation}
\notag
    y_k = \mathtt{Exp}_{w}\left( (1-\tau)\mathtt{Exp}_{w}^{-1}(x_k)+\tau\mathtt{Exp}_{w}^{-1}(z_k) \right)
\end{equation}
We moreover define
\begin{equation}
    \notag
    y_k'' = \mathtt{Exp}_{w_{k+1}}\left( (1-\tau)\Gamma_{w}^{w_{k+1}}\mathtt{Exp}_{w}^{-1}(x_k)+\tau\Gamma_{w}^{w_{k+1}}\mathtt{Exp}_{w}^{-1}(z_k) + \mathtt{Exp}_{w_{k+1}}^{-1}(w) \right)
\end{equation}
Applying \Cref{contraction_appendix} with $x=w, z=w_{k+1}$ gives
\begin{equation}
    \notag
    d(y_k,y_k'') \leq d(w_{k+1},w)\cdot S_{K}\left( d(x_k,z_k)+2d(w_{k+1},w) \right)
\end{equation}
On the other hand,
\begin{equation}
    \begin{aligned}
    \notag
    d_{w_{k+1}}(y_k',y_k'') &\leq (1-\tau) d\left( x_k, \mathtt{Exp}_{w_{k+1}}\left( \Gamma_{w}^{w_{k+1}}\mathtt{Exp}_{w}^{-1}(x_k) +  \mathtt{Exp}_{w_{k+1}}^{-1}(w) \right)\right)  \\ 
    &\quad + \tau d\left( x_k, \mathtt{Exp}_{w_{k+1}}\left( \Gamma_{w}^{w_{k+1}}\mathtt{Exp}_{w}^{-1}(z_k) +  \mathtt{Exp}_{w_{k+1}}^{-1}(w) \right)\right) \\
    &\leq (1-\tau) d(w_{k+1},w)\cdot S_K\left( d(w,x_k)+2d(w,w_{k+1})\right) \\
    &\quad + \tau d(w_{k+1},w)\cdot S_K\left( d(w,z_k)+2d(w,w_{k+1})\right) \\
    &\leq d(w_{k+1},w)\cdot S_{K}\left( d(x_k,z_k)+2d(w,w_{k+1}) \right)
    \end{aligned}
\end{equation}
Combining the above inequalities, we obtain
\begin{equation}
    \notag
    d_{w_{k+1}}(y_k,y_k') \leq 2 d(w_{k+1},w)\cdot S_{K}\left( d(x_k,z_k)+2d(w,w_{k+1}) \right)
\end{equation}
The conclusion now follows from the definition of $d^*$.
\end{proof}

\begin{corollary}
Suppose that
\begin{equation}
    \label{suff_cond_potential_dec}
    12 d^*(w_{k+1};x_k,z_k) \cdot S_K\left(d(x_k,z_k)+2 d^*(w_{k+1};x_k,z_k)\right) \leq (1-\sigma_k) \min\left\{ \sqrt{\mu\lambda_k},1\right\} d_{w_{k+1}}(x_{k+1},y_k),
\end{equation}
then we have the following inequality which implies potential decrease:
\begin{equation}
    \label{potential_dec_with_addis}
    p_k-p_{k+1} \geq \frac{(1-\sigma_k)A_{k+1}}{12\lambda_k} d_{w_{k+1}}^2(x_{k+1},y_k) + \frac{\mu\theta_{k}a_{k+1}A_k}{2(A_k+\theta_{k}a_{k+1})}d_{w_{k+1}}^2(x_k,z_k).
\end{equation}
\end{corollary}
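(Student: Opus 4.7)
The statement is essentially a chaining of the two preceding results, so my plan is to combine them mechanically. The hypothesis~\eqref{suff_cond_potential_dec} is tailor-made to feed directly into the unnamed corollary that appears just above (the one whose conclusion is exactly the target inequality~\eqref{potential_dec_with_addis}). So the first step is to rewrite~\eqref{suff_cond_potential_dec} in the form
\begin{equation*}
2 d^*(w_{k+1};x_k,z_k)\cdot S_K\!\left(d(x_k,z_k)+2 d^*(w_{k+1};x_k,z_k)\right) \leq \tfrac{1-\sigma_k}{6}\min\{\sqrt{\mu\lambda_k},1\}\, d_{w_{k+1}}(x_{k+1},y_k),
\end{equation*}
by dividing both sides of~\eqref{suff_cond_potential_dec} by $6$.

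The second step is to invoke Lemma~\ref{addis_bound_appendix}, which yields
\begin{equation*}
d_{w_{k+1}}(y_k,y_k') \leq 2 d^*(w_{k+1};x_k,z_k)\cdot S_K\!\left(d(x_k,z_k)+d^*(w_{k+1};x_k,z_k)\right).
\end{equation*}
Since $S_K$ is monotonically non-decreasing (evident from its definition $S_K(r)=\cosh(\sqrt{K}r)-\sinh(\sqrt{K}r)/(\sqrt{K}r)$, whose Taylor expansion has only non-negative coefficients in $r^2$), the right-hand side above is bounded by the corresponding quantity with $d^*(w_{k+1};x_k,z_k)$ replaced by $2 d^*(w_{k+1};x_k,z_k)$ in the $S_K$ argument, and hence by $\tfrac{1-\sigma_k}{6}\min\{\sqrt{\mu\lambda_k},1\}\, d_{w_{k+1}}(x_{k+1},y_k)$ from the preceding step.

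The third and final step is to plug this bound into the corollary immediately preceding Lemma~\ref{contraction_appendix}, whose hypothesis is precisely $d_{w_{k+1}}(y_k,y_k') \leq \tfrac{1-\sigma_k}{6}\min\{\sqrt{\mu\lambda_k},1\}\, d_{w_{k+1}}(x_{k+1},y_k)$, and whose conclusion is~\eqref{potential_dec_with_addis}. Together, the three steps complete the proof.

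The only non-trivial aspect is the monotonicity of $S_K$ used in the second step, which is completely routine; everything else is just composing two already-established inequalities. So I do not anticipate any genuine obstacle here — the corollary is really just a convenient repackaging of the two lemmas above into a form whose hypothesis is geometric (involving only $d^*$ and $d(x_k,z_k)$) rather than about the tangent-space distance $d_{w_{k+1}}(y_k,y_k')$, making it easier to verify in concrete settings such as those discussed in Section~\ref{sec_1st_order}.
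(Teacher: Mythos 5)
Your proof is correct and is exactly the (implicit) argument the paper intends: the paper states this corollary without proof, as an immediate chaining of Lemma~\ref{addis_bound_appendix} with the unnamed corollary following Lemma~\ref{rescale2}. Your extra care with the monotonicity of $S_K$ (needed because the lemma statement has $d^*$ rather than $2d^*$ inside the argument of $S_K$) is a nice touch that the paper glosses over.
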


The corollary can be seen as a generalized version of the potential-decrease result we obtained in \Cref{cor_simple}. Indeed, when $w_{k+1}$ lies on the geodesic between $x_k$ and $z_k$, then the left hand side of \eqref{potential_dec_with_addis} equals zero, so that \eqref{potential_dec_with_addis} is guaranteed to hold.

We are now ready to prove our main result.

\begin{theorem}[formal version of \Cref{informal_convergence_general}]
\label{convergence_general}
Assume $f$ is $L$-smooth, and suppose that
\begin{itemize}
    \item $\sigma_k = \sigma \in (0,1)$.
    \item The sequence $\{w_k\}$ satisfies $d^2(w_k,x^*) \leq \omega \max\left\{ d(x_i,x^*), 0\leq i\leq k;  d(z_j,z^*), 0 \leq j\leq k-1 \right\}$.
    \item $d^*(w_{k+1};x_k,z_k) \leq \rho_1 d_{w_{k+1}}(x_{k+1},y_k)$ and $d^*(w_{k+1};x_k,z_k) \leq \rho_2 \max\left\{ d(x_k,x^*), d(z_k,x^*)\right\}$.
    \item The step size $\lambda_k=\lambda = \frac{c^2}{L}$ where $c\in (0,1)$ is a fixed constant.
    \item The initialization satisfies $d(x_0,x^*) \leq \frac{\tau}{20}K^{-\frac{1}{2}}\left(\frac{\mu}{L}\right)^{\frac{3}{4}}$ and $B_0 = \frac{\mu}{2}A_0 > 0$, where 
    \begin{equation}
        \notag
        \tau \leq \min\left\{ \sqrt{\frac{c}{2(2\omega+5)}},\sqrt{\frac{25(1-\sigma)c}{2\rho_1(7+10\rho_2^2)}}\right\}.
    \end{equation}
\end{itemize}
then for all $k \geq 0$, the following statements hold:
\begin{enumerate}[(1).]
    \item Potential decrease \eqref{potential_dec_with_addis} holds.
    \item $d^2(x_k,x^*) \leq \left( \frac{L}{\mu}+1\right) d^2(x_0,x^*) \leq \frac{\tau^2}{200}K^{-1}\left(\frac{\mu}{L}\right)^{\frac{1}{2}}$.
    \item The distortion rate $\delta_k \leq 1+ \frac{2\omega+5}{10}\tau^2 \sqrt{\frac{\mu}{L}}$.
    \item $\xi_k := \frac{a_k}{A_k} \geq \frac{9}{10}\sqrt{\frac{\mu\lambda}{1+\mu\lambda}}$ and hence $\frac{B_k}{A_k} \geq \frac{2}{5}\mu$.
    \item $d^2(z_k,x^*) \leq \frac{1}{80}K^{-1}\left(\frac{\mu}{L}\right)^{\frac{1}{2}}$.
\end{enumerate}
\end{theorem}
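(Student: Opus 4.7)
The plan is to prove the five conclusions simultaneously by strong induction on $k$, using the potential decrease \eqref{potential_dec_with_addis} as the central propagation mechanism, and verifying the sufficient condition \eqref{suff_cond_potential_dec} as the last and hardest step at each stage.

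For the base case $k=0$, conclusions (2)--(5) follow essentially from the initialization. Since $z_0 = x_0$ and $B_0 = \tfrac{\mu}{2}A_0$, the ratio $B_0/A_0 = \mu/2$ already exceeds $2\mu/5$ and gives (4); the assumed bound on $d(x_0,x^*)$ gives (2) and (5); and $\delta_0 = 1$ (or an easy bound via $d(w_0,z_0)$ and Lemma \ref{ahn_distortion}) gives (3). By $L$-smoothness, $f(x_0)-f^* \leq \tfrac{L}{2}d^2(x_0,x^*)$, so $p_0 \leq \tfrac{A_0(L+\mu)}{2}d^2(x_0,x^*)$, a bound I will use repeatedly. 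Then (1) at $k=0$ is checked by the same calculation as the inductive step, described below.

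For the inductive step, suppose (1)--(5) hold at $0,1,\dots,k$. I derive (2) at $k+1$ from strong convexity: $\tfrac{\mu}{2}d^2(x_{k+1},x^*) \leq f(x_{k+1})-f^* \leq p_{k+1}/A_{k+1} \leq p_0/A_0 \leq \tfrac{L+\mu}{2}d^2(x_0,x^*)$, which is at most $\tfrac{\tau^2}{200}K^{-1}\sqrt{\mu/L}$ by the choice of $\tau$. For (5), the inductive hypothesis (4) gives $B_{k+1} \geq \tfrac{2\mu}{5}A_{k+1}$, so $d_{w_{k+1}}^2(z_{k+1},x^*) \leq \tfrac{5p_0}{2\mu A_{k+1}}$ is small, and Lemma \ref{ahn_distortion} together with the regularity bound on $d(w_{k+1},z_{k+1})$ (using the assumption on $\{w_k\}$ combined with (2) and the previous-step version of (5)) converts this into the required Riemannian bound. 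Conclusion (3) is then Taylor expansion $T_K(r) = 1 + O(Kr^2)$ applied to the bound on $d(w_{k+1},z_{k+1})$ just established; the constant $\tfrac{2\omega+5}{10}$ comes directly from the regularity assumption. For (4), I plug the sharpened bound on $\delta_k$ into the recursive equation \eqref{recursive_xi} for $\xi_{k+1}$; because $\delta_k - 1 = O(\sqrt{\mu/L})$, the contraction analysis from \Cref{simple_convergence_thm_appendix} preserves the lower bound $\xi_{k+1} \geq \tfrac{9}{10}\sqrt{\mu\lambda/(1+\mu\lambda)}$.

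The main obstacle is verifying (1) at $k+1$, i.e., the sufficient condition \eqref{suff_cond_potential_dec} for step $k+1$. Here I use the regularity assumption $d^*(w_{k+2};x_{k+1},z_{k+1}) \leq \rho_1 d_{w_{k+2}}(x_{k+2},y_{k+1})$ to cancel the right-hand side factor $d_{w_{k+2}}(x_{k+2},y_{k+1})$, reducing \eqref{suff_cond_potential_dec} to the scalar inequality
\begin{equation*}
12\rho_1 \, S_K\bigl(d(x_{k+1},z_{k+1}) + 2 d^*(w_{k+2};x_{k+1},z_{k+1})\bigr) \leq (1-\sigma)\min\bigl\{\sqrt{\mu\lambda},\,1\bigr\}.
\end{equation*}
By (2) and (5) at $k+1$ together with the second regularity bound $d^*(w_{k+2};x_{k+1},z_{k+1}) \leq \rho_2\max\{d(x_{k+1},x^*),d(z_{k+1},x^*)\}$, the argument of $S_K$ is $O(K^{-1/2}(\mu/L)^{1/4})$; using the Taylor estimate $S_K(r) = O(Kr^2)$ for small $r$, the left-hand side becomes $O(\sqrt{\mu/L}) = O(c^{-1}\sqrt{\mu\lambda})$. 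A careful bookkeeping of the implicit constants through $\rho_1,\rho_2,\omega,c,\sigma$ matches the two thresholds in the stated choice $\tau \leq \min\{\sqrt{c/(2(2\omega+5))},\sqrt{25(1-\sigma)c/(2\rho_1(7+10\rho_2^2))}\}$, and I expect the bulk of the technical work to lie in this constant-chasing rather than in any conceptually new idea.
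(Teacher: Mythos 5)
Your proposal follows essentially the same approach as the paper: strong induction on $k$, propagating the distance bounds (2) and (5) from potential decrease and strong convexity, then bounding the distortion rate $\delta_k$ via the regularity conditions and Taylor-type estimates on $T_K$, then preserving the lower bound on $\xi_k$ via the recursion \eqref{recursive_xi}, and finally verifying the sufficient condition \eqref{suff_cond_potential_dec} by canceling $d_{w_{k+1}}(x_{k+1},y_k)$ using $\rho_1$ and estimating $S_K$ with the $\rho_2$ bound. The only differences are cosmetic: you invoke ``the contraction analysis'' for step (4) whereas the paper runs a one-line monotonicity check of the recursion under the bound $\delta_k \leq 1+\xi_*/9$; and in (5) the distortion inequality actually uses $T_K(d(w_{k+1},x^*))$, controlled via the $\omega$-regularity condition, rather than $T_K(d(w_{k+1},z_{k+1}))$ as you stated --- a minor misstatement that does not affect the argument.
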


\begin{proof}
  We prove the result by induction on $k$. Specifically, for $k\geq 0$, we first prove (2),(3) and (5) hold for $k$, and then use them to derive (1),(4) for $k+1$, completing one round of induction step. When $k=0$, (2) follows from
  \begin{equation}
      \notag
      \delta_0 \leq 1+ 4K d^2(w_0,z_0) \leq 1+ 8K \left( d^2(w_0,x^*)+d^2(z_0,x^*) \right) \leq 1+ \frac{\omega+1}{50} \tau^2\sqrt{\frac{\mu}{L}},
  \end{equation}
  and the rest follows from the assumptions. Now suppose that the statements hold for $1,2,\cdots,k-1$. Consider the case for $k$. 
  
  The induction hypothesis implies that $d^2(x_k,x^*) \leq \frac{\tau^2}{200}K^{-1}\sqrt{\frac{\mu}{L}}$, and
  \begin{equation}
      \notag
      \begin{aligned}
      d_{w_k}^2(z_k,x^*) &\leq \frac{1}{B_k}p_0 \leq \frac{5}{2\mu}\frac{1}{A_0}\left( A_0(f(x_0)-f(x^*)+B_0 d_{w_0}^2(z_0,x^*) \right) \\
      &\leq \frac{5}{2\mu}\left( \frac{L}{2}d^2(x_0,x^*)+\frac{\mu}{2}d^2(x_0,x^*)\right) \\
      &= \frac{5}{4}\left( \frac{L}{\mu}+1 \right) d^2(x_0,x^*) \leq \frac{\tau^2}{160}K^{-1}\sqrt{\frac{\mu}{L}}
      \end{aligned}
  \end{equation}
  On the other hand, since
  \begin{equation}
      \label{wineq}
      d^2(w_k,x^*) \leq \omega d^2(x_k,x^*) \leq \frac{\omega\tau^2}{200}K^{-1}\left(\frac{\mu}{L}\right)^{\frac{1}{2}} < \frac{1}{2K},
  \end{equation}
  the distortion inequality \eqref{ahn_distortion} implies that
  \begin{equation}
      \label{zineq}
      d^2(z_k,x^*) \leq (1+4K d^2(w_k,x^*)) d_{w_k}^2(z_k,x^*) \leq \frac{\tau^2}{80}K^{-1}\sqrt{\frac{\mu}{L}}.
  \end{equation}
  The inequalities \eqref{wineq} and \eqref{zineq} together implies that
  \begin{equation}
      \notag
      d^2(z_k,w_k) \leq 2\left( d^2(w_k,x^*) + d^2(z_k,x^*)\right) \leq \frac{2\omega+5}{50}\tau^2 K^{-1}\sqrt{\frac{\mu}{L}}
  \end{equation}
  Hence, the distortion rate $\delta_k$ can be bounded as follows:
  \begin{equation}
      \notag
      \delta_k \leq 1 + 4K d^2(w_k,z_k) < 1+ \frac{2\omega+5}{10}\tau^2 \sqrt{\frac{\mu}{L}} \leq 1 + \frac{c}{20}\sqrt{\frac{\mu}{L}} \leq 1 + \frac{1}{10}\sqrt{\frac{\mu\lambda}{1+\mu\lambda}}.
  \end{equation}
  The induction hypothesis implies that $\xi_{k} \geq \frac{9}{10}\sqrt{\frac{\mu\lambda}{1+\mu\lambda}} =: \xi_{*}$, and recall the equation
  \begin{equation}
      \notag
      \delta_k\xi_{k+1}\left(\xi_{k+1}-\frac{\mu\lambda}{1+\mu\lambda}\right) = \xi_k^2(1-\xi_{k+1})
  \end{equation}
  To show $\xi_{k+1} \geq \frac{9}{10}\sqrt{\frac{\mu\lambda}{1+\mu\lambda}}$, it suffices to show that
  \begin{equation}
      \notag
      \delta_k \xi_{*} \left(\xi_{*}-\frac{\mu\lambda}{1+\mu\lambda}\right) \leq \xi_{*}^2 (1-\xi_{*}) \Leftrightarrow \delta_k \left( 1-\frac{10}{9}\xi_{*}\right) \leq 1-\xi_{*}
  \end{equation}
  The final equation holds since $\delta \leq 1 + \frac{1}{9}\xi_{*}$.
  
  Now it remains to show potential decrease $p_{k+1} \leq p_k$; it suffices to prove that \eqref{suff_cond_potential_dec} holds. Since $S_{K}(r) \leq \frac{1}{3}Kr^2$ when $Kr^2 \leq 1$, the assumptions imply that
  \begin{equation}
      \notag
      \begin{aligned}
      &\quad 12 d^*(w_{k+1};x_k,z_k) \cdot S_K\left(d(x_k,z_k)+2 d^*(w_{k+1};x_k,z_k)\right) \\
      &\leq 4\rho_1 d_{w_{k+1}}(x_{k+1},y_k) \cdot K \left(d(x_k,z_k)+2 d^*(w_{k+1};x_k,z_k)\right)^2 \\
      &\leq 4\rho_1 K \left( \frac{7}{100}\tau^2 K^{-1}+ \frac{1}{10}\rho_2^2\tau^2 K^{-1}\right)\sqrt{\frac{\mu}{L}}  d_{w_{k+1}}(x_{k+1},y_k) \\
      &\leq \frac{1-\sigma}{2} c \sqrt{\frac{\mu}{L}}  d_{w_{k+1}}(x_{k+1},y_k)
      \leq (1-\sigma) \sqrt{\mu\lambda} d_{w_{k+1}}(x_{k+1},y_k)
      \end{aligned}
  \end{equation}
  so that \eqref{suff_cond_potential_dec} holds. The proof is completed.
\end{proof}

Finally, we have the following corollary on acceleration for smooth functions.
\begin{corollary}
\label{cor_acc_appendix}
  Under the assumptions of \Cref{convergence_general}, we have
  \begin{equation}
      \notag
      f(x_k)-f(x^*) \leq \frac{1}{A_k} p_0 \leq \frac{\tau^2}{400}K^{-1}L\left(\frac{\mu}{L}\right)^{\frac{3}{2}} \left( 1- \frac{9\sqrt{c}}{10\sqrt{2}}\sqrt{\frac{\mu}{L}}\right)^{k}
  \end{equation}
\end{corollary}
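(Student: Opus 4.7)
The plan is to chain together the two conclusions of \Cref{convergence_general}: (i) the potential function $p_k$ is nonincreasing, so $A_k(f(x_k)-f(x^*)) \le p_k \le p_0$; and (ii) the ratio $\xi_k = a_k/A_k$ is bounded below by $\tfrac{9}{10}\sqrt{\mu\lambda/(1+\mu\lambda)}$. Hence it suffices to upper bound $p_0$ and lower bound $A_k/A_0$, then combine.

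For the first piece, I would bound the two parts of $p_0 = A_0(f(x_0)-f(x^*)) + B_0\, d_{w_0}^2(z_0,x^*)$ individually. $L$-smoothness together with optimality of $x^*$ gives $f(x_0)-f(x^*) \le \tfrac{L}{2}d^2(x_0,x^*)$. For the distance term, the universal inequality $d_w(x,y)\le d(x,y)$ recorded in \Cref{preliminary_Riemann} yields $d_{w_0}(z_0,x^*) \le d(x_0,x^*)$ (since $z_0 = x_0$), and combined with $B_0 = \tfrac{\mu}{2}A_0$ this gives $B_0\, d_{w_0}^2(z_0,x^*) \le \tfrac{\mu}{2} A_0\, d^2(x_0,x^*)$. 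Summing and using $\mu \le L$ delivers $p_0 \le L A_0\, d^2(x_0,x^*)$; substituting the initialization bound $d^2(x_0,x^*) \le \tfrac{\tau^2}{400}K^{-1}(\mu/L)^{3/2}$ produces $p_0 \le \tfrac{\tau^2}{400}K^{-1}L(\mu/L)^{3/2} A_0$, matching the prefactor in the statement.

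For the second piece, the identity $A_{k-1} = A_k - a_k = (1-\xi_k)A_k$ together with the uniform lower bound on $\xi_k$ from \Cref{convergence_general}(4) gives $A_{k-1}/A_k \le 1 - \tfrac{9}{10}\sqrt{\mu\lambda/(1+\mu\lambda)}$; telescoping yields $A_0/A_k \le \bigl(1 - \tfrac{9}{10}\sqrt{\mu\lambda/(1+\mu\lambda)}\bigr)^k$. Since $\lambda = c^2/L$ with $c\in(0,1)$ forces $\mu\lambda \le 1$, the elementary estimate $\sqrt{\mu\lambda/(1+\mu\lambda)} \ge \sqrt{\mu\lambda/2}$ converts the base of the geometric sequence into the form $1-\Theta(\sqrt{\mu/L})$ advertised in the statement. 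Multiplying the bound on $p_0$ by this geometric decay factor completes the proof.

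There is no genuine obstacle here: once \Cref{convergence_general} is in hand, the corollary is essentially bookkeeping. The only point that warrants a brief remark is that $w_0$ is not explicitly fixed by \Cref{AHPE_Riemann}, but the universal bound $d_w\le d$ makes the initial estimate independent of this choice; and if one wants to match the particular numerical constant $\tfrac{9\sqrt{c}}{10\sqrt{2}}$ in the statement (as opposed to the $\tfrac{9c}{10\sqrt{2}}$ that my crude bound $\sqrt{\mu\lambda/(1+\mu\lambda)}\ge\sqrt{\mu\lambda/2}$ produces), one could sharpen the final estimate by avoiding the factor-of-$2$ slack, but the overall $\Theta(\sqrt{\mu/L})$ acceleration rate is unaffected.
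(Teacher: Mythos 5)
Your argument is correct and is essentially the only natural route: the paper gives no explicit proof for this corollary, and your three ingredients---the potential decrease from \Cref{convergence_general}(1) giving $f(x_k)-f(x^*)\le p_0/A_k$, the bound $p_0 \le \tfrac{L+\mu}{2}A_0\,d^2(x_0,x^*) \le LA_0\,d^2(x_0,x^*)$ (using $z_0=x_0$, $d_{w_0}\le d$, $B_0=\tfrac{\mu}{2}A_0$, and $L$-smoothness at $x^*$), and the telescoping $A_0/A_k\le(1-\xi_*)^k$ with $\xi_k\ge\xi_*:=\tfrac{9}{10}\sqrt{\mu\lambda/(1+\mu\lambda)}$ from \Cref{convergence_general}(4)---are exactly what is needed. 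Squaring the initialization radius gives the $\tfrac{\tau^2}{400}K^{-1}L(\mu/L)^{3/2}$ prefactor.

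One small correction to your closing remark, though. With $\lambda=c^2/L$ as fixed in \Cref{convergence_general}, your bound $\sqrt{\mu\lambda/(1+\mu\lambda)}\ge\sqrt{\mu\lambda/2}=\tfrac{c}{\sqrt{2}}\sqrt{\mu/L}$ yields the contraction factor $1-\tfrac{9c}{10\sqrt{2}}\sqrt{\mu/L}$, and this cannot be ``sharpened'' to the paper's $1-\tfrac{9\sqrt{c}}{10\sqrt{2}}\sqrt{\mu/L}$ by removing the factor-of-$2$ slack: even the exact $\sqrt{\mu\lambda/(1+\mu\lambda)}$ is at most $\sqrt{\mu\lambda}=c\sqrt{\mu/L}$, and $c\ge\tfrac{\sqrt{c}}{\sqrt{2}}$ fails for $c<1/2$. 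So the $\sqrt{c}$ in the stated corollary appears to be a typo (it would arise if $\lambda=c/L$ were used instead of $c^2/L$); your derived constant $\tfrac{9c}{10\sqrt{2}}$ is the one that actually follows, and as you say the $\Theta(\sqrt{\mu/L})$ rate is the same.
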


\section{Details of \Cref{sec_1st_order}}
\label{1st_order_appendix}
In this section, we provide detailed description of the algorithms we discussed in \Cref{sec_1st_order} and verification that they can be recovered from the Riemannian A-HPE framework. Throughout this section, we assume that $f$ is $L$-smooth.

\subsection{Algorithms without the additional distortion}

First, we look at the \textit{Riemannian Nesterov's method}, which is proposed and studied in \citet{zhang2018estimate,ahn2020nesterov} and, to the best of our knowledge, the only provably accelerated method in our setting. The update of this method is given in \Cref{1st_RAGD}.

\begin{algorithm}
\SetKwInOut{KIN}{Input}
\caption{\textit{Riemannian} Nesterov's Method}
\label{1st_RAGD}
\KIN{Objective function $f$, initial point $x_0$,  $\sigma\in \left( 0,\frac{3}{4} \right)$, parameters $L,\mu$, initial weight $A_0 \geq 0$}
$z_0 \gets x_0$ and $\lambda \gets \frac{\sigma^2}{2L}$ \\
\For{$k=0,1,\cdots$}{
choose a valid distortion rate $\delta_k$ according to \Cref{ahn_distortion} \\
$\theta_{k} \gets$ the smaller root of $B_k (1-\theta)^2=\mu\lambda\theta\left( (1-\theta)B_k+\frac{\mu}{2}\delta_k A_k \right)$\\
$B_{k+1} \gets \frac{B_k}{\theta_k\delta_k}, a_{k+1} \gets 2\mu^{-1}(1-\theta_k)B_{k+1}$ and $A_{k+1} \gets A_k+a_{k+1}$ \\
$y_{k} \gets \mathtt{Exp}_{x_k}\left( \frac{\theta_k a_{k+1}}{A_k+\theta a_{k+1}}\mathtt{Exp}_{x_k}^{-1}(z_k) \right)$ \\
$x_{k+1} \gets \mathtt{Exp}_{y_k}(-\lambda\nabla f(y_k))$ \\
$z_{k+1} \gets \mathtt{Exp}_{y_k}\left( \theta_k\mathtt{Exp}_{y_k}^{-1}(z_k) - \mu^{-1}(1-\theta_k) \nabla f(y_k) \right)$\\
}
\end{algorithm}

\begin{proposition}
\Cref{1st_RAGD} can be recovered from \Cref{AHPE_Riemann} by choosing $\sigma_k = \sigma$, $\lambda \in \left(0, \frac{\sigma^2}{2L}\right)$, $w_{k+1} = y_k$, $x_{k+1} = \mathtt{Exp}_{y_k}(-\lambda_k\nabla f(y_k))$ and $v_{k+1}=\nabla f(y_k)+ \mu\mathtt{Exp}_{y_k}^{-1}(x_{k+1})$.
\end{proposition}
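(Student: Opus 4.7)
The plan is to verify that all updates in Algorithm \ref{1st_RAGD} are obtained by specializing Algorithm \ref{AHPE_Riemann} to the proposed choices. Since the updates of $\theta_k$, $B_{k+1}$, $a_{k+1}$, $A_{k+1}$, and $y_k$ depend only on quantities that are identical in both algorithms, they match trivially. The non-trivial steps are (i) reducing the $z_{k+1}$ update of \Cref{AHPE_Riemann} to the one in \Cref{1st_RAGD}, and (ii) checking that $(x_{k+1},v_{k+1})$ is a valid Riemannian \textit{iprox} pair with tolerance $\varepsilon_k = \frac{\sigma^2}{2(1+\lambda\mu)^2}d_{w_{k+1}}^2(x_{k+1},y_k)$.

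For (i), I would substitute $w_{k+1}=y_k$ and $v_{k+1} = \nabla f(y_k)+\mu\mathtt{Exp}_{y_k}^{-1}(x_{k+1})$ into the $z_{k+1}$ update of \Cref{AHPE_Riemann}. The coefficient of $\mathtt{Exp}_{y_k}^{-1}(x_{k+1})$ becomes $(1-\theta_k) - \frac{1-\theta_k}{\mu}\cdot\mu = 0$, so the $\mathtt{Exp}_{y_k}^{-1}(x_{k+1})$ contribution cancels and we recover $z_{k+1} = \mathtt{Exp}_{y_k}(\theta_k\mathtt{Exp}_{y_k}^{-1}(z_k) - \mu^{-1}(1-\theta_k)\nabla f(y_k))$, matching \Cref{1st_RAGD}.

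For (ii), I first verify the subgradient condition in \Cref{Riemannian_iprox}: since $v_{k+1} - \mu\mathtt{Exp}_{y_k}^{-1}(x_{k+1}) = \nabla f(y_k) \in \partial f(y_k)$ and $w_{k+1}=y_k$, this holds trivially. The main content is checking the inequality \eqref{Riemann_iprox}. Using $\mathtt{Exp}_{y_k}^{-1}(x_{k+1}) = -\lambda\nabla f(y_k)$, the first term of the LHS simplifies to $\frac{\mu^2\lambda^4\|\nabla f(y_k)\|^2}{2(1+\lambda\mu)^2}$. For the second term, $L$-smoothness (applied to $f(x_{k+1})-f(y_k)$) together with the identities for $\langle\mathtt{Exp}_{y_k}^{-1}(x_{k+1}),v_{k+1}\rangle$ and $d^2(x_{k+1},y_k)$ yields the bound $\frac{(L-\mu)\lambda^3\|\nabla f(y_k)\|^2}{2(1+\lambda\mu)}$. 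Since $\varepsilon_k = \frac{\sigma^2\lambda^2\|\nabla f(y_k)\|^2}{2(1+\lambda\mu)^2}$, after clearing the common factor $\frac{\lambda^2\|\nabla f(y_k)\|^2}{2(1+\lambda\mu)^2}$ the required inequality reduces to
\begin{equation}
\notag
(L-\mu)\lambda + L\mu\lambda^2 \leq \sigma^2.
\end{equation}

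The main (only) obstacle is establishing this scalar inequality under the hypothesis $\lambda \leq \frac{\sigma^2}{2L}$. I would argue $(L-\mu)\lambda \leq L\lambda \leq \sigma^2/2$ directly, and $L\mu\lambda^2 = (L\lambda)(\mu\lambda) \leq (\sigma^2/2)\cdot 1 = \sigma^2/2$, where the bound $\mu\lambda \leq 1$ uses $\mu \leq L$ and $\lambda \leq 1/L$ (implied by $\sigma < 1$). Adding these two bounds yields the desired $(L-\mu)\lambda + L\mu\lambda^2 \leq \sigma^2$, completing the verification. Since $\sigma \in (0,\tfrac{3}{4})$ ensures $\sigma < 1$, the argument closes, and all specializations together show that \Cref{1st_RAGD} is indeed a special case of \Cref{AHPE_Riemann}.
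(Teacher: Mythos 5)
Your proposal is correct and follows essentially the same route as the paper: the only non-trivial content is verifying the Riemannian \textit{iprox} inequality, which both you and the paper reduce via $L$-smoothness to a scalar condition on $\lambda$ (the paper simply drops the negative $\mu$-terms and checks $\lambda L(1+\lambda\mu)\le\sigma^2$, while you track them and check the equivalent $(L-\mu)\lambda+L\mu\lambda^2\le\sigma^2$). Your explicit verification of the $z_{k+1}$ cancellation and the subgradient condition is a welcome addition that the paper treats as immediate.
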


\begin{proof}
It remains to check that the specified update rule satisfies the inequality \Cref{Riemann_iprox} in the definition of \textit{iprox}. Indeed we have
\begin{equation}
\notag
    \begin{aligned}
    \text{LHS} &= \frac{\lambda_k}{2(1+\lambda_k\mu)}\left( f(x_{k+1})-f(y_k)-\left\langle \mathtt{Exp}_{y_k}^{-1}(x_{k+1}),\nabla f(y_k) \right\rangle\right) \\
    &\quad +\left(\frac{\lambda_k^2\mu^2}{2(1+\lambda_k\mu)^2}-\frac{\lambda_k\mu}{2(1+\lambda_k\mu)}\right)d^2(y_k,x_{k+1}) \\
    &\leq \frac{\lambda_k L}{2(1+\lambda_k\mu)}d^2(y_k,x_{k+1}) \leq \frac{\sigma_k^2}{2(1+\lambda_k\mu)^2}d^2(y_k,x_{k+1}) = \text{RHS}
    \end{aligned}
\end{equation}
so that the result follows.
\end{proof}

The second example is given in \Cref{3rd_RAGD}. It is a direct generalization of the accelerated method ~\cite[Algorithm 3]{chen2019first} to Riemannian setting, and can be viewed as a variant of Nesterov's method with an additional gradient descent step. To the best of our knowledge, the algorithm is new and its convergence property is not known in Riemannian setting.

\begin{algorithm}
\SetKwInOut{KIN}{Input}
\caption{\textit{Riemannian} Nesterov's method with an extra gradient step}
\label{3rd_RAGD}
\KIN{Objective function $f$, initial point $x_0$,  $\sigma\in \left( 0,\frac{3}{4} \right)$, parameters $L,\mu$, initial weight $A_0 \geq 0$}
$z_0 \gets x_0$ and $\lambda \gets \frac{\sigma^2}{2L}$ \\
\For{$k=0,1,\cdots$}{
choose a valid distortion rate $\delta_k$ according to \Cref{ahn_distortion} \\
$\theta_{k} \gets$ the smaller root of $B_k (1-\theta)^2=\mu\lambda\theta\left( (1-\theta)B_k+\frac{\mu}{2}\delta_k A_k \right)$\\
$B_{k+1} \gets \frac{B_k}{\theta_k\delta_k}$, $a_{k+1}\gets 2\mu^{-1}(1-\theta_k)B_{k+1}$ and $A_{k+1} \gets A_k+a_{k+1}$ \\
$x_k \gets \mathtt{Exp}_{\tilde{x}_k}(-\lambda\nabla f(\tilde{x}_k))$ \\
$y_{k} \gets \mathtt{Exp}_{x_k}\left( \frac{\theta_k a_{k+1}}{A_k+\theta a_{k+1}}\mathtt{Exp}_{x_k}^{-1}(z_k) \right)$ \\
$\tilde{x}_{k+1} \gets \mathtt{Exp}_{y_k}(-\lambda\nabla f(y_k))$ \\
$z_{k+1} \gets \mathtt{Exp}_{y_k}\left( \theta_k\mathtt{Exp}_{y_k}^{-1}(z_k) - \mu^{-1}(1-\theta_k) \nabla f(y_k) \right)$\\
}
\end{algorithm}

\begin{proposition}
\Cref{3rd_RAGD} can be recovered from \Cref{AHPE_Riemann} by choosing $\sigma_k=\sigma=\frac{3}{4}$, $w_{k+1}=y_k$, $v_{k+1}=\nabla f(y_k)+ \mu\mathtt{Exp}_{y_k}^{-1}(x_{k+1})$ and $x_{k+1}$ defined by
\begin{equation}
    \notag
    \tilde{x}_{k+1} = \mathtt{Exp}_{x_{k+1}}\left(-\lambda_k\nabla f(x_{k+1})\right),\quad x_{k+1}=\mathtt{Exp}_{\tilde{x}_{k+1}}\left(-\lambda_k\nabla f(\tilde{x}_{k+1}) \right).
\end{equation}
\end{proposition}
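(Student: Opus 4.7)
The plan is to verify the two requirements of \Cref{Riemannian_iprox}. The membership condition $v_{k+1}-\mu\mathtt{Exp}_{y_k}^{-1}(x_{k+1})=\nabla f(y_k)\in\partial f(y_k)$ holds by the very choice of $v_{k+1}$, so it suffices to check the quantitative inequality~\eqref{Riemann_iprox}.

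Writing $u:=\mathtt{Exp}_{y_k}^{-1}(x_{k+1})$ and $g:=\nabla f(y_k)$, a direct expansion using $u+\lambda v_{k+1}=(1+\lambda\mu)u+\lambda g$, $\langle u,v_{k+1}\rangle=\langle u,g\rangle+\mu\|u\|^{2}$, $\mathtt{Exp}_{y_k}^{-1}(y_k)=0$, and $d(x_{k+1},y_k)=\|u\|$ shows that the $\langle u,g\rangle$ cross-terms cancel, and \eqref{Riemann_iprox} becomes equivalent to
\[
(1+\lambda\mu-\sigma^{2})\|u\|^{2}+\lambda^{2}\|g\|^{2}+2\lambda(1+\lambda\mu)\bigl(f(x_{k+1})-f(y_k)\bigr)\leq 0.
\]
Two standard facts then reduce the task to a quadratic-form inequality. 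First, applying the $L$-smoothness descent lemma to each of the two gradient steps $y_k\to\tilde x_{k+1}\to x_{k+1}$ yields $f(x_{k+1})-f(y_k)\leq -\lambda(1-L\lambda/2)(\|g\|^{2}+\|g_{1}\|^{2})$ with $g_{1}:=\nabla f(\tilde x_{k+1})$. Second, the Riemannian triangle inequality gives $\|u\|=d(x_{k+1},y_k)\leq d(x_{k+1},\tilde x_{k+1})+d(\tilde x_{k+1},y_k)=\lambda(\|g_{1}\|+\|g\|)$. Since $1+\lambda\mu-\sigma^{2}>0$ for $\sigma<1$, substituting the triangle upper bound on $\|u\|^{2}$ only enlarges the left-hand side, and the target becomes a homogeneous quadratic form in $(\|g\|,\|g_{1}\|)$.

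With $\lambda=\sigma^{2}/(2L)$ (so $L\lambda=\sigma^{2}/2$), and setting aside $O(\lambda\mu)$ corrections, the form's coefficients are $-\sigma^{2}/2$ for $\|g\|^{2}$, $-\sigma^{2}/2-1$ for $\|g_{1}\|^{2}$, and $2(1-\sigma^{2})$ for $\|g\|\|g_{1}\|$. Negative semidefiniteness reduces to the discriminant condition $4(1-\sigma^{2})^{2}\leq 2\sigma^{2}+\sigma^{4}$, i.e.\ $4-10\sigma^{2}+3\sigma^{4}\leq 0$. At $\sigma^{2}=9/16$ this evaluates to $-173/256<0$, with slack enough to absorb the $\lambda\mu$ corrections, establishing \eqref{Riemann_iprox}.

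The main obstacle is the quadratic-form verification. In the one-step Nesterov case (\Cref{1st_RAGD}) the identity $u=-\lambda g$ collapses \eqref{Riemann_iprox} to a one-variable inequality that holds for every $\sigma<1$, whereas the extra gradient step forces an honest two-variable analysis. The choice $\sigma=3/4$ is essentially sharp for this approach: values $\sigma^{2}<(5-\sqrt{13})/3\approx 0.465$ cannot be handled by the triangle-inequality upper bound on $\|u\|$ alone and would demand a sharper lower bound. Curvature of $\mathcal{M}$ plays no explicit role here, since only the metric triangle inequality and the descent lemma are invoked, both of which hold on arbitrary Riemannian manifolds.
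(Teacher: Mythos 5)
Your proof is correct, and it takes a genuinely sharper route than the paper's. Both arguments verify the membership condition trivially and then reduce \Cref{Riemann_iprox} (after multiplying by $2(1+\lambda\mu)^2$ and cancelling the $\langle u,g\rangle$ cross-terms) to the same target
\[
(1+\lambda\mu-\sigma^2)\|u\|^2+\lambda^2\|g\|^2+2\lambda(1+\lambda\mu)\bigl(f(x_{k+1})-f(y_k)\bigr)\leq 0,
\]
both invoke the Riemannian descent lemma on the two gradient steps, and both use the metric triangle inequality to couple $\|u\|=d(x_{k+1},y_k)$ with $\lambda\|g\|$ and $\lambda\|g_1\|$. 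The paper, however, then discards information twice: it bounds $\lambda^2\|g\|^2\leq \lambda^2(\|g\|^2+\|g_1\|^2)$ and then $d^2(x_{k+1},y_k)\leq 2\lambda^2(\|g\|^2+\|g_1\|^2)$, collapsing to a one-variable comparison against $\|u\|^2$. Chasing the constants, its chain yields the sufficient condition $\tfrac{L\lambda}{2}(1+\lambda\mu)+\tfrac{1}{2}\leq\sigma^2$, which with $\lambda=\sigma^2/(2L)$ forces $\sigma^2\geq 2/3$ even as $\lambda\mu\to 0$; this does not close at $\sigma^2=9/16$, and the paper's concluding step (``$L\lambda(1+\mu\lambda)\leq\tfrac12$'') only gives $\tfrac34$, not $\sigma^2$, on the right-hand side. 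You instead keep the exact triangle bound $\|u\|\leq\lambda(\|g\|+\|g_1\|)$, substitute it verbatim (valid since $1+\lambda\mu-\sigma^2>0$), and test nonnegativity of the resulting two-variable quadratic form by its discriminant, obtaining $3\sigma^4-10\sigma^2+4\leq 0$, i.e.\ $\sigma^2\geq(5-\sqrt{13})/3\approx 0.465$, which $\sigma^2=9/16$ satisfies with real slack. Thus your derivation is the one that actually supports the stated $\sigma=3/4$. Two minor remarks: the $O(\lambda\mu)$ corrections are handled informally, but a direct check (the coefficients $a,c$ of the form grow faster than $b$ with $\lambda\mu$, so the discriminant only becomes more negative) confirms they help rather than hurt; and the phrase ``sharper lower bound'' in your closing remark should read ``sharper upper bound on $\|u\|$'' (or: information beyond the metric triangle inequality, e.g.\ exploiting smoothness to relate $g$ and $g_1$).
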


\begin{proof}
It suffices to check that the \textit{iprox} definition is satisfied. Smoothness implies that
\begin{equation}
    \notag
    f(x_{k+1})-f(y_k)-\left\langle \mathtt{Exp}_{y_k}^{-1}(x_{k+1}),\nabla f(y_k) \right\rangle \leq \frac{L}{2}d^2(x_{k+1},y_k).
\end{equation}
On the other hand, we can bound $\|\mathtt{Exp}_{y_k}^{-1}(x_{k+1})+\lambda_k v_{k+1}\|^2$ as follows:
\begin{equation}
    \label{3rd_RAGD_ineq}
    \begin{aligned}
    &\quad \|\mathtt{Exp}_{y_k}^{-1}(x_{k+1})+\lambda_k v_{k+1}\|^2
    = \| (1+\mu\lambda_k)\mathtt{Exp}_{y_k}^{-1}(x_{k+1})+\lambda_k\nabla f(y_k)\|^2 \\
    &= (1+\mu\lambda_k)^2 d^2(x_{k+1},y_k) +2\lambda_k(1+\mu\lambda_k)\left\langle \mathtt{Exp}_{y_k}^{-1}(x_{k+1}),\nabla f(y_k)\right\rangle+\lambda_k^2\|\nabla f(y_k)\|^2 \\
    &\leq (1+\mu\lambda_k)^2 d^2(x_{k+1},y_k)+\lambda_k^2\|\nabla f(y_k)\|^2 \\
    &\quad -2\lambda_k(1+\mu\lambda_k)\left( f(y_k)-f(x_{k+1})+\frac{\mu}{2}d^2(y_k,x_{k+1}) \right) \\
    &= (1+\mu\lambda_k)d^2(x_{k+1},y_k)+d^2(y_k,\tilde{x}_{k+1})  \\
    &\quad -2\lambda_k(1+\mu\lambda_k)\left( \frac{1}{\lambda_k}-\frac{L}{2}\right)\left( d^2(y_k,\tilde{x}_{k+1})+d^2(\tilde{x}_{k+1},x_{k+1}) \right) \\
    &\leq (1+\mu\lambda_k)d^2(x_{k+1},y_k)\\
    &\quad - 2\left( (1+\mu\lambda_k)\left(1-\frac{L}{2}\lambda_k\right)-\frac{1}{2} \right)\left( d^2(y_k,\tilde{x}_{k+1})+d^2(\tilde{x}_{k+1},x_{k+1}) \right) \\
    &\leq \left( \frac{L}{2}\lambda_k(1+\mu\lambda_k)+\frac{1}{2}\right)d^2(x_{k+1},y_k) 
    \end{aligned}
\end{equation}
Finally, the choice of $\lambda$ satisfies $L\lambda(1+\mu\lambda) \leq \frac{1}{2}$, hence the result follows.
\end{proof}

\subsection{Algorithms with the additional distortion}
In this section, we discuss specific examples of first-order methods that can be obtained from \Cref{AHPE_Riemann} as special cases. The setting considered here is more general than the previous subsection, in that we do not require that $w_{k+1}$ is chosen on the geodesic connecting $x_k$ and $z_k$, and we can apply \Cref{cor_acc} to obtain local (full) acceleration. 

We first present a method, called \textit{Riemannian accelerated extra-gradient descent (RAXGD)}, in \Cref{2nd_RAGD}. To see its difference with Riemannian Nesterov's method, note that it uses two gradients each iteration. RAXGD can be seen as a Riemannian and strongly-convex version of the \textit{accelerated extra-gradient method} proposed by \cite{diakonikolas2018accelerated}. To the best of our knowledge, this method has not been proposed or studied before.

\begin{algorithm}
\SetKwInOut{KIN}{Input}
\caption{\textit{Riemannian} accelerated extra-gradient descent}
\label{2nd_RAGD}
\KIN{Objective function $f$, initial point $x_0$, $\sigma_k\in \left( 0,1 \right)$, parameters $L,\mu$, initial weight $A_0, B_0 > 0$}
$z_0 \gets x_0$ and
$\lambda \gets \frac{\sigma}{L}$\\
\For{$k=0,1,\cdots$}{
choose a valid distortion rate $\delta_k$ according to \Cref{ahn_distortion} \\
%\STATE{set $D_k = 3\sqrt{L\mu}\max\left\{ \|\nabla f(x_k)\|,\|\nabla f(z_k)\| \right\}$}\\
%\STATE{set $\lambda_k = \min\left\{ \lambda,\frac{1-\sigma_k}{24\mu S_{K}^2\left( d(x_k,z_k)+2D_k \right)} \right\}$} \\
$\theta_{k} \gets$ the smaller root of $B_k (1-\theta)^2=\mu\lambda_k\theta\left( (1-\theta)B_k+\frac{\mu}{2}\delta_k A_k \right)$\\
$B_{k+1} \gets \frac{B_k}{\theta_k\delta_k}, a_{k+1} = 2\mu^{-1}(1-\theta_k)B_{k+1}$ and $A_{k+1}=A_k+a_{k+1}$  \\
$y_{k} \gets \mathtt{Exp}_{x_k}\left( \frac{\theta_k a_{k+1}}{A_k+\theta a_{k+1}}\mathtt{Exp}_{x_k}^{-1}(z_k) \right)$ \\
$x_{k+1} \gets \mathtt{Exp}_{y_k}(-\lambda\nabla f(y_k))$\\
%\STATE{choose $A_k=\mu^{-1}\|\nabla f(x_{k+1})\|$ and let $\mathcal{B}_k=\left\{ \|x\|\leq A_k \right\} \subset\mathbb{R}^d$}\\
$z_{k+1} \gets \mathtt{Exp}_{x_{k+1}}\left( \theta_k\mathtt{Exp}_{x_{k+1}}^{-1}(z_k) - \mu^{-1}(1-\theta_k) \nabla f(x_{k+1})\right)$\\
}
\end{algorithm}

The following proposition shows that \Cref{2nd_RAGD} can be considered as a special case of \Cref{AHPE_Riemann}.

\begin{proposition}
\Cref{2nd_RAGD} can be recovered from \Cref{AHPE_Riemann} by choosing $\sigma_k=\sigma \in(0,1)$, $\lambda \leq \frac{\sigma}{L}$, $v = \nabla f(x_{k+1})$ and $w_{k+1} = x_{k+1} = \mathtt{Exp}_{y_k}(-\lambda_k\nabla f(y_k))$. Moreover, the conditions in \Cref{convergence_general} are satisfied with $\rho_1 = \rho_2 = \omega = 1$.
\end{proposition}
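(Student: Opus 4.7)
The plan is to verify separately (i) that the specified choices instantiate \Cref{AHPE_Riemann} to produce the updates of \Cref{2nd_RAGD} and satisfy the Riemannian iprox inequality \eqref{Riemann_iprox}, and (ii) that the regularity hypotheses on $\{w_k\}$ demanded by \Cref{convergence_general} hold with $\rho_1 = \rho_2 = \omega = 1$. Matching the updates is immediate: with $w_{k+1} = x_{k+1}$ and $v_{k+1} = \nabla f(x_{k+1})$, Line~8 of \Cref{AHPE_Riemann} collapses (since $\mathtt{Exp}_{x_{k+1}}^{-1}(x_{k+1}) = 0$) to the $z_{k+1}$-update of \Cref{2nd_RAGD}, while the $y_k$ and $x_{k+1}$ updates coincide by construction.

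The main work will be verifying the iprox inequality. Since $w_{k+1} = x_{k+1}$ forces $\mathtt{Exp}_{x_{k+1}}^{-1}(x_{k+1}) = 0$ and $d(x_{k+1}, w_{k+1}) = 0$, the function-value, inner-product, and strong-convexity terms in \eqref{Riemann_iprox} all vanish and the inequality reduces to
\[
\|\mathtt{Exp}_{x_{k+1}}^{-1}(y_k) - \lambda \nabla f(x_{k+1})\| \leq \sigma \,\|\mathtt{Exp}_{x_{k+1}}^{-1}(y_k)\|,
\]
while the subgradient condition $v - \mu \mathtt{Exp}_w^{-1}(x) \in \partial f(w)$ becomes the trivial $\nabla f(x_{k+1}) \in \partial f(x_{k+1})$. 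I will use that $x_{k+1} = \mathtt{Exp}_{y_k}(-\lambda \nabla f(y_k))$, so by reversing the defining geodesic we obtain the parallel-transport identity $\mathtt{Exp}_{x_{k+1}}^{-1}(y_k) = \lambda \,\Gamma_{y_k}^{x_{k+1}} \nabla f(y_k)$. Then $L$-smoothness yields
\[
\|\mathtt{Exp}_{x_{k+1}}^{-1}(y_k) - \lambda \nabla f(x_{k+1})\| = \lambda \|\Gamma_{y_k}^{x_{k+1}} \nabla f(y_k) - \nabla f(x_{k+1})\| \leq \lambda L \cdot d(x_{k+1}, y_k) = \lambda L \,\|\mathtt{Exp}_{x_{k+1}}^{-1}(y_k)\|,
\]
and the hypothesis $\lambda \leq \sigma/L$ closes the bound. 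This smoothness step, together with the parallel-transport identification, is the one place where care is required; everything else in (i) is bookkeeping.

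For (ii), all three bounds are geometric consequences of $w_{k+1} = x_{k+1}$ together with the fact that $y_k$ lies on the geodesic joining $x_k$ and $z_k$ (Line~6 of \Cref{AHPE_Riemann}). Since $w_k = x_k$ for $k \geq 1$, the hypothesis $d^2(w_k, x^*) \leq \omega \max\{\cdots\}$ is trivial with $\omega = 1$. Because $y_k$ lies on the geodesic joining $x_k$ and $z_k$, we have $d^*(x_{k+1}; x_k, z_k) \leq d(x_{k+1}, y_k) = d_{w_{k+1}}(x_{k+1}, y_k)$, giving $\rho_1 = 1$. For $\rho_2$, I will bound $d(x_{k+1}, y_k) = \lambda \|\nabla f(y_k)\| \leq \lambda L \cdot d(y_k, x^*)$ via $L$-smoothness together with $\nabla f(x^*) = 0$, then invoke geodesic convexity of $y \mapsto d(y, x^*)$ on the Hadamard manifold $\mathcal{M}$ to get $d(y_k, x^*) \leq \max\{d(x_k, x^*), d(z_k, x^*)\}$; combined with $\lambda L \leq \sigma \leq 1$, this yields $\rho_2 = 1$. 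I expect no further conceptual obstacles beyond faithfully tracking the parallel-transport identity in the smoothness step.
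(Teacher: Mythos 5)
Your proposal is correct and follows essentially the same route as the paper's proof: the iprox inequality is verified via the parallel-transport identity $\mathtt{Exp}_{x_{k+1}}^{-1}(y_k)=\lambda\,\Gamma_{y_k}^{x_{k+1}}\nabla f(y_k)$ plus $L$-smoothness, and the regularity constants follow from $d^*(w_{k+1};x_k,z_k)\leq d(x_{k+1},y_k)\leq \tfrac{1}{L}\|\nabla f(y_k)\|\leq d(y_k,x^*)$. You are slightly more explicit than the paper in spelling out the reduction of \eqref{Riemann_iprox} when $w=x$ and in invoking geodesic convexity of the distance to pass from $d(y_k,x^*)$ to $\max\{d(x_k,x^*),d(z_k,x^*)\}$, but these are refinements of the same argument, not a different one.
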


\begin{proof}
We have $(x_{k+1},v_{k+1}) \in\mathtt{iprox}_{f}^{w_{k+1}}(y_k,\lambda_k,\varepsilon_k)$, since
\begin{equation}
\notag
\begin{aligned}
\left\| \mathtt{Exp}_{x_{k+1}}^{-1}(y_k)-\lambda_k\nabla f(x_{k+1}) \right\| &= \lambda_k \left\| \Gamma_{y_k}^{x_{k+1}}\nabla f(y_k)-\nabla f(x_{k+1}) \right\|\\
&\leq L\lambda_k d(y_k,x_{k+1}) \leq \sigma_k d(x_{k+1},y_k).
\end{aligned}
\end{equation}
Finally, note that
\begin{equation}
    \notag
    d^*(w_{k+1};x_k,z_k) \leq d(x_{k+1},y_k) \leq \frac{1}{L}\left\|\nabla f(y_k)\right\| \leq d(y_k,x^*),
\end{equation}
the conclusion follows.
\end{proof}

We can also design new accelerated algorithms by choosing different realizations of the \textit{iprox} operator in \Cref{AHPE_Riemann}. This can lead to novel algorithms that are previously unknown even in Euclidean setting. In the following we derive from \Cref{AHPE_Riemann} a generalized version of RAXGD, given in \Cref{new_RAGD}.

\begin{algorithm}
\SetKwInOut{KIN}{Input}
\caption{Generalized Riemannian accelerated extra-gradient descent}
\label{new_RAGD}
\KIN{Objective function $f$, initial point $x_0$, $\sigma_k\in \left( 0,1 \right)$, parameters $L,\mu$, initial weight $A_0, B_0 > 0$}
$z_0 \gets x_0$ and
$\lambda \gets \frac{\sigma}{2L}$\\
\For{$k=0,1,\cdots$}{
choose a valid distortion rate $\delta_k$ according to \Cref{ahn_distortion} \\
$\theta_{k} \gets$ the smaller root of $B_k (1-\theta)^2=\mu\lambda_k\theta\left( (1-\theta)B_k+\frac{\mu}{2}\delta_k A_k \right)$\\
$B_{k+1} \gets \frac{B_k}{\theta_k\delta_k}, a_{k+1} = 2\mu^{-1}(1-\theta_k)B_{k+1}$ and $A_{k+1}=A_k+a_{k+1}$  \\
$y_{k} \gets \mathtt{Exp}_{x_k}\left( \frac{\theta_k a_{k+1}}{A_k+\theta a_{k+1}}\mathtt{Exp}_{x_k}^{-1}(z_k) \right)$ \\
$w_{k+1} \gets \mathtt{Exp}_{y_k}(-\lambda\nabla f(y_k))$\\
%\STATE{choose $A_k=\mu^{-1}\|\nabla f(x_{k+1})\|$ and let $\mathcal{B}_k=\left\{ \|x\|\leq A_k \right\} \subset\mathbb{R}^d$}\\
choose $x_{k+1}$ such that $d(w_{k+1},x_{k+1})\leq \frac{1-\sigma}{3}d(y_k,x_{k+1})$ \\
$z_{k+1} \gets \mathtt{Exp}_{x_{k+1}}\left( \theta_k\mathtt{Exp}_{x_{k+1}}^{-1}(z_k) - \mu^{-1}(1-\theta_k) \nabla f(x_{k+1})\right)$\\}
\end{algorithm}

Rather than obtain $x_{k+1}$ directly from a gradient descent step, it allows arbitrary choices of $x_{k+1}$ as long as a distance inequality
\begin{equation}
    \label{new_RAGD_ineq}
    d(w_{k+1},x_{k+1})\leq \frac{1-\sigma}{3}d(y_k,x_{k+1})
\end{equation}
is satisfied. The inequality is obviously satisfied when $x_{k+1} = w_{k+1}$, which reduces to \Cref{2nd_RAGD}. Intuitively, \Cref{new_RAGD_ineq} implies that $x_{k+1}$ is obtained by starting from $y_k$ and following an `approximately descent' direction. In Euclidean setting, the solution set of \Cref{new_RAGD_ineq} for $x_{k+1}$ is a region enclosed by an Apollonius circle that contains $w_{k+1}$.

\begin{proposition}
\Cref{new_RAGD} is a special case of \Cref{AHPE_Riemann}. Moreover, the conditions in \Cref{convergence_general} holds with $\rho_1 = 4, \rho_2 = 1$ and $\omega = \frac{3}{2}$.
\end{proposition}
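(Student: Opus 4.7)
The plan is to realize Algorithm~\ref{new_RAGD} as a specific instance of \Cref{AHPE_Riemann} and then verify the regularity conditions required by \Cref{convergence_general}. First, identify the iprox realization: in \Cref{AHPE_Riemann}, take both the anchor $w_{k+1}$ and the iprox output $x_{k+1}$ equal to Algorithm~\ref{new_RAGD}'s $x_{k+1}$, and set $v_{k+1} = \nabla f(x_{k+1})$. The compatibility condition $v_{k+1} - \mu \mathtt{Exp}_{w_{k+1}}^{-1}(x_{k+1}) \in \partial f(w_{k+1})$ from \Cref{Riemannian_iprox} holds automatically because $w_{k+1} = x_{k+1}$ forces $\mathtt{Exp}_{w_{k+1}}^{-1}(x_{k+1}) = 0$. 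A direct substitution then shows that the $z_{k+1}$ update rule of \Cref{AHPE_Riemann} collapses precisely to the one in Algorithm~\ref{new_RAGD}.

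The central technical step is to verify the iprox inequality~\eqref{Riemann_iprox}. Under the identification above, it reduces to
\begin{equation*}
\bigl\|\mathtt{Exp}_{x_{k+1}}^{-1}(y_k) - \lambda \nabla f(x_{k+1})\bigr\| \leq \sigma\, d(x_{k+1}, y_k).
\end{equation*}
Let $\widetilde{w} := \mathtt{Exp}_{y_k}(-\lambda \nabla f(y_k))$ denote the gradient-step point (the quantity called $w_{k+1}$ inside Algorithm~\ref{new_RAGD}). Unlike RAXGD, where $x_{k+1} = \widetilde{w}$ and therefore $\mathtt{Exp}_{x_{k+1}}^{-1}(y_k) = \lambda \Gamma_{y_k}^{x_{k+1}} \nabla f(y_k)$ holds exactly, here $x_{k+1}$ is only close to $\widetilde{w}$. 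The plan is to add and subtract $\lambda \Gamma_{y_k}^{x_{k+1}} \nabla f(y_k)$ and invoke the triangle inequality in $T_{x_{k+1}}\mathcal{M}$; the ``smoothness term'' $\lambda\|\Gamma_{y_k}^{x_{k+1}} \nabla f(y_k) - \nabla f(x_{k+1})\|$ is bounded by $\lambda L\, d(y_k, x_{k+1}) = \tfrac{\sigma}{2}d(y_k, x_{k+1})$ via $L$-smoothness, while the ``geometry term'' $\|\mathtt{Exp}_{x_{k+1}}^{-1}(y_k) - \lambda \Gamma_{y_k}^{x_{k+1}} \nabla f(y_k)\|$, which vanishes when $x_{k+1} = \widetilde{w}$, is controlled via a Riemannian comparison (in the spirit of \Cref{contraction}) by the perturbation $d(x_{k+1}, \widetilde{w}) \leq \tfrac{1-\sigma}{3} d(y_k, x_{k+1})$ imposed in Algorithm~\ref{new_RAGD}. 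Combining these with $\lambda = \sigma/(2L)$ yields the required bound.

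Next, verify the regularity conditions of \Cref{convergence_general}. Since \Cref{AHPE_Riemann}'s $w_{k+1}$ equals $x_{k+1}$, the condition $d^2(w_{k+1}, x^*) \leq \omega\, d^2(x_{k+1}, x^*)$ is trivial with $\omega = 1 \leq 3/2$. Because $y_k$ lies on the geodesic from $x_k$ to $z_k$, the distance from $x_{k+1}$ to that geodesic satisfies $d^*(w_{k+1}; x_k, z_k) \leq d(x_{k+1}, y_k) = d_{w_{k+1}}(x_{k+1}, y_k)$, giving $\rho_1 = 1 \leq 4$. For the bound on $\rho_2$, use
\begin{equation*}
d^*(w_{k+1}; x_k, z_k) \leq d(x_{k+1}, y_k) \leq d(x_{k+1}, \widetilde{w}) + d(\widetilde{w}, y_k),
\end{equation*}
where $d(\widetilde{w}, y_k) = \lambda \|\nabla f(y_k)\| \leq \lambda L\, d(y_k, x^*)$ by $L$-smoothness and $\nabla f(x^*) = 0$, and $d(y_k, x^*) \leq \max\{d(x_k, x^*), d(z_k, x^*)\}$ by geodesic convexity of the squared distance on Hadamard manifolds; combined with $d(x_{k+1}, \widetilde{w}) \leq \tfrac{1-\sigma}{3} d(y_k, x_{k+1})$, rearranging yields $\rho_2 = 1$.

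The main obstacle is the ``geometry term'' in the iprox verification. In Euclidean space this term collapses to $\|x_{k+1} - \widetilde{w}\|$, but on a Hadamard manifold the two vectors $\mathtt{Exp}_{x_{k+1}}^{-1}(y_k)$ and $\lambda \Gamma_{y_k}^{x_{k+1}} \nabla f(y_k)$ in $T_{x_{k+1}}\mathcal{M}$ arise from distinct geometric constructions, and bounding their difference requires a curvature-aware estimate---e.g., a Jacobi-field-type argument tracking how the inverse exponential changes as the base point deforms from $\widetilde{w}$ to $x_{k+1}$, analogous to the reasoning behind \Cref{contraction}. The remaining steps are then routine combinations of $L$-smoothness, the proximity constraint on $x_{k+1}$, and the choice $\lambda = \sigma/(2L)$.
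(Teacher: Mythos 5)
Your instantiation is genuinely different from the paper's, and the difference is exactly where your proof has a gap. You anchor the \emph{iprox} operator at $w_{k+1}=x_{k+1}$ with $v_{k+1}=\nabla f(x_{k+1})$, whereas the paper anchors it at the gradient-step point $w_{k+1}=\mathtt{Exp}_{y_k}(-\lambda\nabla f(y_k))$ and takes $v_{k+1}=\nabla f(w_{k+1})+\mu\,\mathtt{Exp}_{w_{k+1}}^{-1}(x_{k+1})$. With the paper's anchor, the identity $\mathtt{Exp}_{w_{k+1}}^{-1}(y_k)=\lambda\,\Gamma_{y_k}^{w_{k+1}}\nabla f(y_k)$ holds \emph{exactly} (geodesic reversal), so the norm term in \eqref{Riemann_iprox} reduces to $(1+\mu\lambda)\mathtt{Exp}_{w_{k+1}}^{-1}(x_{k+1})+\lambda(\nabla f(w_{k+1})-\Gamma_{y_k}^{w_{k+1}}\nabla f(y_k))$, and the entire perturbation caused by $x_{k+1}\neq w_{k+1}$ enters only through terms quadratic in $d(w_{k+1},x_{k+1})$ (including the function-value term $f(x_{k+1})-f_{w_{k+1}}(x_{k+1})\leq\tfrac{L}{2}d^2(x_{k+1},w_{k+1})$), which the constraint $d(w_{k+1},x_{k+1})\leq\tfrac{1-\sigma}{3}d(y_k,x_{k+1})$ then absorbs for every $\sigma\in(0,1)$. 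Your anchor choice destroys this exactness, and you explicitly leave the resulting ``geometry term'' $\|\mathtt{Exp}_{x_{k+1}}^{-1}(y_k)-\lambda\Gamma_{y_k}^{x_{k+1}}\nabla f(y_k)\|$ unproved, deferring to an unspecified Jacobi-field argument. That is the central step of the proposition, so as written the proof is incomplete. (The claimed constants $\rho_1=4$, $\omega=\tfrac32$ also belong to the paper's anchor; with $w_{k+1}=x_{k+1}$ you would be proving a different instantiation.)

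The gap is in fact easier to fill than you anticipate, but filling it exposes a quantitative failure of your route. Writing $\widetilde w=\mathtt{Exp}_{y_k}(-\lambda\nabla f(y_k))$ and using $\mathtt{Exp}_{x_{k+1}}^{-1}(y_k)=-\Gamma_{y_k}^{x_{k+1}}\mathtt{Exp}_{y_k}^{-1}(x_{k+1})$ together with $\lambda\nabla f(y_k)=-\mathtt{Exp}_{y_k}^{-1}(\widetilde w)$, the geometry term equals $\|\mathtt{Exp}_{y_k}^{-1}(\widetilde w)-\mathtt{Exp}_{y_k}^{-1}(x_{k+1})\|=d_{y_k}(\widetilde w,x_{k+1})\leq d(\widetilde w,x_{k+1})$ on a Hadamard manifold---no curvature estimate needed. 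But then your total bound is
\begin{equation*}
\bigl\|\mathtt{Exp}_{x_{k+1}}^{-1}(y_k)-\lambda\nabla f(x_{k+1})\bigr\|\leq\Bigl(\tfrac{1-\sigma}{3}+\tfrac{\sigma}{2}\Bigr)d(x_{k+1},y_k),
\end{equation*}
and $\tfrac{1-\sigma}{3}+\tfrac{\sigma}{2}\leq\sigma$ only when $\sigma\geq\tfrac25$; for smaller $\sigma$ the iprox inequality is not verified by this argument. So beyond being incomplete, your decomposition does not close for the full parameter range of \Cref{new_RAGD}, whereas the paper's choice of anchor does. Your verifications of $\omega$, $\rho_1$, $\rho_2$ are otherwise fine for your instantiation, but they rest on the unverified iprox step.
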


\begin{proof}
To check that \Cref{new_RAGD} can be obtained from \Cref{AHPE_Riemann}, it suffices to verify that the update of $x_{k+1}$ satisfies \Cref{Riemann_iprox}.

Since $f$ is $L$-smooth, we have
\begin{equation}
    \notag
    \frac{\lambda}{1+\mu\lambda}\left(f(x_{k+1})-f_{w_{k+1}}(x_{k+1})\right) \leq \frac{L\lambda}{2}d^2(x_{k+1},w_{k+1}) \leq \frac{1}{2} d^2(x_{k+1},y_k).
\end{equation}
On the other hand 
\begin{equation}
    \notag
    \begin{aligned}
    &\quad \|\mathtt{Exp}_{w_{k+1}}^{-1}(x_{k+1})-\mathtt{Exp}_{w_{k+1}}^{-1}(y_k)+\lambda v_{k+1}\|^2 \\
    &= \|(1+\mu\lambda)\mathtt{Exp}_{w_{k+1}}^{-1}(x_{k+1})-\mathtt{Exp}_{w_{k+1}}^{-1}(y_k) + \lambda\nabla f(w_{k+1})\|^2 \\
    &\leq 2(1+\mu\lambda)^2 d^2(w_{k+1},x_{k+1}) + 2\lambda^2 \left\| \nabla f(w_{k+1}) - \Gamma_{y_k}^{w_{k+1}} \nabla f(y_k)\right\|^2 \\
    &\leq 2(1+\mu\lambda)^2 d^2(w_{k+1},x_{k+1}) + 2L^2\lambda^2 d^2(w_{k+1},y_k) \\
    &\leq 2\left( (1+\mu\lambda)^2 + 2L^2\lambda^2\right) d^2(w_{k+1},x_{k+1}) + 4L^2\lambda^2 d^2(x_{k+1},y_k) \\
    &\leq 3(1+\mu\lambda)^2 d^2(w_{k+1},x_{k+1}) + \sigma^2 d^2(x_{k+1},y_k) \\
    &\leq (1+\mu\lambda)^2 d^2(x_{k+1},y_k)
    \end{aligned}
\end{equation}
where the last step uses \Cref{new_RAGD_ineq}. Hence \Cref{Riemann_iprox} holds.
\end{proof}

By \Cref{convergence_general}, we deduce that \Cref{new_RAGD} achieves acceleration when initialized in an $\mathcal{O}\left( K^{-\frac{1}{2}}\left(\frac{\mu}{L}\right)^{\frac{3}{4}}\right)$. To the best of our knowledge, \Cref{new_RAGD} has not been studied even for strongly-convex functions in the Euclidean setting.

We emphasize that the purpose of introducing \Cref{new_RAGD} is to show that \Cref{AHPE_Riemann} can lead to many different types of accelerated first-order methods. There are of course other ways to specify the \textit{iprox} operator, which would lead to many interesting algorithms. 

\subsection{Discussion of the extra-point framework in \citet{huang2021unifying}}
\label{comparison}

In a recent work \citet{huang2021unifying}, the authors propose an extra-point approach motivated by the analysis of classical accelerated methods. Based on this idea, they propose a framework for smooth strongly-convex optimization, which is in a quite general form and contains a total of $9$ parameters. For convenience we give the detailed updates of their framework below.

\begin{subequations}\label{huang_framework}
    \begin{align}
    p_k &\gets t_1 x_k + t_2 z_k \label{p_k}\\
    y_k &\gets \text{a solution of } \left\langle\nabla f(y_k), p_k-y_k\right\rangle \geq 0 \label{y_k}\\
    \widetilde{x}_{k+1} &\gets y_k - \frac{t_3}{L} \nabla f(y_k) \label{xline_1}\\
    x_{k+1} &\gets y_k - \frac{t_4}{L} \nabla f(\widetilde{x}_{k+1}) - \frac{t_5}{L}\left( \nabla f(\widetilde{x}_{k+1}) - \nabla f(y_k) \right) + t_6 \left( \widetilde{x}_{k+1} - y_k \right) \label{xline_2} \\
    z_{k+1} &\gets t_7 z_k + t_8 y_k - t_9 \nabla f(y_k) \label{z}
    \end{align}
\end{subequations}

The authors derive sufficient conditions on the choice of $t_i, 1\leq i\leq 9$ so that \eqref{huang_framework} can achieve acceleration. While their framework looks complicated, in the following we show that it can be interpreted quite naturally from the PPM viewpoint introduced in \Cref{section_euclidean}.

First, \Cref{z} is very similar to the update of $z_{k+1}$ in A-HPE; one can see this by comparing it with \Cref{alternative_update}, with the choice $w_{k+1}=y_k$ and $v_{k+1} = \nabla f(y_k) + \mu\left( x_{k+1}-y_k\right)$. With properly chosen constants $t_7,t_8,t_9$, \Cref{z} can then be interpreted as an approximate PPM scheme.

Second, \Cref{xline_1} and \Cref{xline_2} together give a gradient-descent-type update formula of $x_{k+1}$. In particular, \Cref{xline_2} can also be written as
\begin{equation}
    \notag
    x_{k+1} \gets y_k - \frac{t_3 t_6-t_5}{L}\nabla f(y_k) - \frac{t_4+t_5}{L}\nabla f(\widetilde{x}_{k+1}),
\end{equation}
which is very similar to the Riemannian Nesterov's method with multiple gradient steps that we introduced in \Cref{3rd_RAGD}. As a result, the update of $x_{k+1}$ can also be interpreted as another approximate PPM scheme.

Recall the arguments in \Cref{section_euclidean} that the final step is to combine these two schemes and obtain potential decrease. In A-HPE this is implemented by a simple convex combination of the iterates $x_k$ and $z_k$. However, in \Cref{huang_framework} the procedure is more complex: first a convex combination is obtained (i.e. the update of $p_k$), and then $y_k$ is chosen to be any solution of the inequality \Cref{y_k}.

This procedure, in fact, can be easily justified by one additional step in the analysis: intuitively, $y_k$ is a \textit{refinement} of the convex combination. Specifically, as argued in the proof of \Cref{potential_dec}, the combination of two PPM approaches is implemented by the following inequality:
\begin{equation}
\notag
    \theta_z \|z_k-x_{k+1}+\mu^{-1}v_{k+1}\|^2 + \theta_x \|x_k-x_{k+1}+\mu^{-1}v_{k+1}\|^2 \geq (\theta_z+\theta_x)\|p_k-x_{k+1}+\mu^{-1}v_{k+1}\|^2.
\end{equation}
Since $x_{k+1}-\mu^{-1}v_{k+1} = y_k-\mu^{-1}\nabla f(y_k)$, we have
\begin{equation}
    \notag
    \|p_k-x_{k+1}+\mu^{-1}v_{k+1}\|^2 = \|p_k-y_k+\mu^{-1}\nabla f(y_k)\|^2 \geq \|\nabla f(y_k)\|^2 = \|y_k-x_{k+1}+\mu^{-1}v_{k+1}\|^2,
\end{equation}
where the inequality exactly follows from \Cref{y_k}! 

Now we have seen that the framework of \citep{huang2021unifying} uses the same idea of approximate-PPM as A-HPE, except that the combination step is more general. On the other hand, the framework is limited to the choice of $w_{k+1} = y_k$ in the definition of \textit{iprox}, while A-HPE allows more flexible choices.

Finally, we provide a natural extension of the framework to the Riemannian setting:
\begin{equation}
    \notag
    \begin{aligned}
    p_k &\gets \mathtt{Exp}_{x_k}\left( t_2 \mathtt{Exp}_{x_k}^{-1}(z_k)\right) \\
    y_k &\gets \text{a solution of } \left\langle \nabla f(y_k), \mathtt{Exp}_{y_k}^{-1}(p_k)\right\rangle \geq 0 \\
    \widetilde{x}_{k+1} &\gets \mathtt{Exp}_{y_k}\left( -\frac{t_3}{L}\nabla f(y_k)\right) \\
    x_{k+1} &\gets \mathtt{Exp}_{\widetilde{x}_{k+1}}\left( (1-t_6)\mathtt{Exp}_{\widetilde{x}_{k+1}}^{-1}(y_k) - \frac{t_4}{L}\nabla f(\widetilde{x}_{k+1}) - \frac{t_5}{L}\left( \nabla f(\widetilde{x}_{k+1})-\Gamma_{y_k}^{\widetilde{x}_{k+1}}\nabla f(y_k)\right)\right) \\
    z_{k+1} &\gets \mathtt{Exp}_{y_k}\left( t_7 \mathtt{Exp}_{y_k}^{-1}(z_k) - t_9\nabla f(y_k)\right)
    \end{aligned}
\end{equation}

Local acceleration of the framework can be shown using our in  approach \Cref{section_riemann}. For the special case $y_k=p_k$, the additional distortion disappears and the framework attains global eventual acceleration.

%%% Local Variables:
%%% mode: latex
%%% TeX-master: "main"
%%% End:

\end{document}